\newtheorem{theorem}{Theorem}[section]
\newtheorem{corollary}[theorem]{Corollary}
\newtheorem{lemma}[theorem]{Lemma}
\newtheorem{conjecture}[theorem]{Conjecture}
\newtheorem{proposition}[theorem]{Proposition}
\theoremstyle{definition}
\newtheorem{example}[theorem]{Example}
\newtheorem{remark}[theorem]{Remark}
\numberwithin{equation}{section}
\newenvironment{customthm}[1]
  {\innercustomthm}
  {\endinnercustomthm}
\newcommand{\RR}{\mathbb{R}}
\newcommand{\CC}{\mathbb{C}}
\newcommand{\NN}{\mathbb{N}}
\newcommand{\ZZ}{\mathbb{Z}}
\newcommand{\TT}{\mathbb{T}}
\renewcommand{\SS}{\mathbb{S}}
\newcommand{\mt}{\mapsto}
\newcommand{\cN}{\mathcal{N}}
\newcommand{\OO}{\mathcal{O}}
\newcommand{\OOO}{\mathcal{\widehat{O}}}
\newcommand{\sB}{\mathscr{B}}
\newcommand{\sL}{\mathscr{L}}
\newcommand{\sW}{\mathscr{W}}
\newcommand{\sN}{\mathscr{N}}
\newcommand{\bx}{\mathbf{x}}
\newcommand{\by}{\mathbf{y}}
\newcommand{\bz}{\mathbf{z}}
\newcommand{\bu}{\mathbf{u}}
\newcommand{\bv}{\mathbf{v}}
\newcommand{\be}{\mathbf{e}}
\newcommand{\bw}{\mathbf{w}}
\newcommand{\diag}{\operatorname{diag}}
\newcommand{\Sp}{\operatorname{Sp}}
\newcommand{\id}{\operatorname{id}}
\newcommand{\dist}{\operatorname{dist}}
\newcommand{\GL}{\operatorname{GL}}
\newcommand{\Ma}{\operatorname{M}}
\newcommand{\SL}{\operatorname{SL}}
\newcommand{\SO}{\operatorname{SO}}
\newcommand{\Ad}{\operatorname{Ad}}
\newcommand{\Gr}{\operatorname{Gr}}
\newcommand{\Lie}{\operatorname{Lie}}
\newcommand{\Aut}{\operatorname{Aut}}
\newcommand{\Stab}{\operatorname{Stab}}
\renewcommand{\ker}{\operatorname{Ker}}
\newcommand{\Id}{\operatorname{Id}}
\newcommand{\im}{\operatorname{Im}}
\newcommand{\B}{\mathscr{B}}
\newcommand{\E}{\mathscr{E}}
\newcommand{\Lf}{\mathfrak{f}}
\newcommand{\Lg}{\mathfrak{g}}
\newcommand{\Lh}{\mathfrak{h}}
\newcommand{\La}{\mathfrak{a}}
\newcommand{\Lp}{\mathfrak{p}}
\newcommand{\Lk}{\mathfrak{k}}
\newcommand{\la}{\langle}
\newcommand{\ra}{\rangle}
\newcommand{\pu}{\pmb{u}}
\newcommand{\pv}{\pmb{v}}
\newcommand{\pw}{\pmb{w}}
\newcommand{\T}{\textsf{T}}
\newcommand{\bin}{\textstyle\binom}
\newcommand {\ignore}[1]  {}
\newif\ifdraft\drafttrue
\newcommand\eq[2]{{\ifdraft{\ \tt [#1]}\else\ignorespaces\fi}\begin{equation}\label{eq:#1}{#2}\end{equation}}
\newcommand {\equ}[1]     {\eqref{eq:#1}}
\newcommand{\ggm}{G/\Gamma}
\newcommand\hd{Hausdorff dimension}
\newcommand\hs{homogeneous space}
\newcommand{\x}{{\bf x}}
\newcommand{\sm}{\smallsetminus}
\newcommand{\smz}{\smallsetminus\{0\}}
\renewcommand{\emptyset}{\varnothing}
\renewcommand{\setminus}{\smallsetminus}
\newcommand{\DDT}{\textstyle\left.\frac{d}{dt}\right|_{t=0}}
\newcommand{\DT}{\textstyle\left.\frac{d}{dt}\right|_{t=1}}
\begin{document}

\title[Nondense orbits on homogeneous spaces  and applications]
{Nondense orbits on homogeneous spaces \\ and applications to geometry and number theory}

\author{Jinpeng An}
\address{School of Mathematical Sciences, Peking University, Beijing, 100871, China}
\email{anjinpeng@gmail.com}

\author{Lifan Guan}
\address{Mathematisches Institut, Georg-August Universit{\"a}t G{\"o}ttingen, Bunsenstrasse 3-5, D-37073 Gottingen, Germany}
\email{guanlifan@gmail.com}

\author{Dmitry Kleinbock}
\address{Department of Mathematics, Brandeis University, Waltham MA 02454-9110, USA}
\email{kleinboc@brandeis.edu}

\date{October 2020}
\thanks{Research supported by an NSFC grant (J.A.), by ERC Consolidator grant 648329 (L. G.) and by NSF grants 
DMS-1600814 and DMS-1900560 (D.K.)}

\begin{abstract}
Let $G$ be a  Lie group, $\Gamma\subset G$  a discrete subgroup,  $X=G/\Gamma$, and $f$ an affine map from $X$ to itself. We give conditions on a submanifold $Z$ of $X$ guaranteeing that the set of points $x\in X$ with $f$-trajectories avoiding $Z$ is hyperplane absolute winning (a property which implies full \hd\ and is stable under countable intersections). A similar result is proved for one-parameter actions on $X$. This has applications to constructing exceptional geodesics on locally symmetric spaces, and to non-density of the set of values of certain functions at integer points.
\end{abstract}

\maketitle

\section{Introduction}

\subsection{Nondense orbits in homogeneous dynamics.}

Let $X$ be a metric space and $F$ a set of self-maps $X \rightarrow X$. For a non-empty subset $Z$ of $X$, define
$$E(F,Z) :=  \left\{x\in X: \overline{Fx} \cap Z =\emptyset\right\}.$$
When $f$ is a single transformation of $X$ we will slightly abuse notation and define
$$E(f,Z) := E(\{f^n: {n\ge0}\},Z) =  \left\{x\in X: \overline{\{f^nx: {n\ge0}\}} \cap Z =\emptyset\right\}.$$
Those sets carry important information about the dynamical system $(X,F)$ and have been extensively studied. Clearly one has $\mu\big(E(f,Z)\big) = 0$ whenever $\mu$ is an $f$-ergodic measure on $X$ with full support. On the other hand, for certain classes of dynamical systems and subsets $Z$ of $X$, sets of those exceptional points can be shown to be quite substantial -- in particular, they are thick. Here and hereafter we say that $E\subset X$ is  {\sl thick\/}  if $\dim(U\cap E) = \dim(U)$ for any open subset $U$ of $X$, where $\dim$ stands for the \hd. See e.g.\ \cite{U, KM, Do, Kl2} for some work done in this direction in the 1990s.

In this paper, we restrict ourselves to the study of the special case when $X$ is a homogeneous space of a Lie group $G$.
Around $25$ years ago, the  {third-named} author considered the case when
$F$ is either a one-parameter or a cyclic semigroup of $G$ acting on $X$ by left translations.
To state this result we need to define  the  {\sl expanding horospherical subgroup} $G_f$ corresponding to $f\in G$:
$$G_f:=\left\{g\in G: \lim_{n\rightarrow \infty} f^{-n}gf^n=1_G \right\}.$$
Another way of defining  $G_f$ is as follows: its Lie algebra is the subalgebra of $\Lie(G)$ whose complexification is the
direct sum of generalized eigenspaces of $\Ad f$ corresponding to eigenvalues of modulus greater than $1$.
See \S\ref{affine} for a discussion. If $F  = \{g_t : t\in \RR\}$ is a   one-parameter subgroup of $G$, we will  denote  $F^+ := \{g_t : t\ge 0\}$ and $F^- := \{g_t : t\le 0\}$, and define the {\sl expanding horospherical subgroup} $G_{F^\pm}$ corresponding to $F^\pm$ as \eq{ehs}{G_{F^\pm}:= G_{g_{{}_{\pm1}}} = \left\{g\in G: \lim_{n\rightarrow \infty} g_{\mp n}gg_{\pm n}=1_G\right \}.}

When $Z\subset X$ is a smooth submanifold, it turns out that a condition sufficient for abundance of orbits avoiding $Z$ can be phrased in the language of transversality.  Let $G$ be a Lie group, $D\subset G$  a closed subgroup (not necessarily discrete),  $X=G/D$,  and let $H,F$ be Lie subgroups of $G$. According to the terminology introduced in  \cite{Kl2,KW3}, a $C^1$ submanifold  $Z$ of $X$ is said to be
 \begin{itemize}
 \item {\sl $H$-transversal} if $T_z(Hz)\not\subset T_zZ$ for every $z\in Z$;
 \smallskip
 \item {\sl $(F,H)$-transversal} if $T_z(Fz)\not\subset T_zZ$ for every $z\in Z$ (that is,  $Z$ is $F$-transversal), and also $T_z(Hz)\not\subset T_zZ\oplus T_z(Fz)$ for every $z\in Z$.
 \end{itemize}
The following theorem was proved in \cite{Kl2}:

\begin{theorem}\label{T:Kle}
 Let $G$ be a Lie group, $\Gamma\subset G$  a discrete subgroup, and $X=G/\Gamma$.
\begin{itemize}
\item[(1)]  Let $f\in G$. Then for any compact {$G_f$-transversal} $C^1$ submanifold $Z\subset X$ the set $E(f,Z)$ is thick.
 \smallskip
\item[(2)]  Let $F\subset G$ be a one-parameter subgroup.  Then for any compact $(F,G_{F^+})$-transversal $C^1$ submanifold $Z\subset X$  the set $E(F^+,Z)$ is thick.
\end{itemize}
\end{theorem}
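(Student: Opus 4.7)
The plan is to argue locally using the product structure induced on $X$ by the $\Ad$-action of $f$, to reduce the thickness claim to a Cantor-set construction inside an expanding horospherical leaf, and then to invoke the transversality hypothesis to control the ``bad'' sets at each scale of the construction.

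First I would localize. Since thickness is by definition a local condition, it suffices to fix $x_0 \in X$ and show that for a small open neighborhood $U$ of $x_0$ the intersection $E(f,Z) \cap U$ has \hd\ equal to $\dim X$. The Lie algebra decomposition $\Lie(G) = \Ln^+ \oplus \Ln^0 \oplus \Ln^-$ into the sum of generalized eigenspaces of $\Ad f$ of modulus $>1$, $=1$, and $<1$ gives, via local multiplication, a diffeomorphism $W^+ \times W^0 \times W^- \to U$, with $W^+$ a neighborhood of $1_G$ in $G_f$. It suffices to prove that for each fixed $(s,v) \in W^0 \times W^-$ the unstable slice
$$E_{s,v} := \{u \in W^+ : \overline{\{f^n u s v x_0 : n \ge 0\}} \cap Z = \emptyset\}$$
has \hd\ equal to $\dim G_f$ in $W^+$. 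Combining this with a standard Marstrand-type slicing argument then yields $\dim(E(f,Z) \cap U) = \dim X$.

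The core step is a Cantor-like construction inside $E_{s,v}$. Let $\lambda > 1$ be strictly less than the smallest modulus of an expanding eigenvalue of $\Ad f$, so that $f^{-n}$ contracts $W^+$ at rate at least $\lambda^{-n}$. Fix $\epsilon > 0$ small and build a nested sequence $K_0 \supset K_1 \supset \cdots$ of compact subsets of $W^+$, with $K_n$ a disjoint union of balls of radius $r_n = c \lambda^{-n}$. To pass from $K_n$ to $K_{n+1}$, remove from each ball $B$ in $K_n$ the set of $u$ for which $f^n u s v x_0$ lies in the $\epsilon$-neighborhood of $Z$, and then repack the remainder with balls of radius $r_{n+1}$. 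The $G_f$-transversality of $Z$, compactness of $Z$, and the $C^1$ hypothesis together supply uniform constants for which $Z$ is locally a $C^1$ graph over a $G_f$-transverse complement; consequently the removed set sits inside a tube of relative thickness $O(\lambda^{-n})$ around a codimension-$\ge 1$ piece, and is covered by $O(\lambda^{\dim G_f - 1})$ balls of radius $r_{n+1}$. A standard Cantor-set dimension estimate then gives $\dim \bigcap_n K_n = \dim G_f$, and a short argument using continuity of $f$ yields $\bigcap_n K_n \subset E_{s,v}$.

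For part (2) I would reduce to part (1) by replacing $f$ with $g_1$ and the submanifold $Z$ with the flowed submanifold $\widetilde Z := \{g_t \cdot z : t \in [0,1],\, z \in Z\}$. The $F$-transversality of $Z$ makes $\widetilde Z$ a $C^1$ submanifold of dimension $\dim Z + 1$, and the $(F, G_{F^+})$-transversality of $Z$ translates precisely into $G_{F^+}$-transversality of $\widetilde Z$, which is what the discrete construction requires. Avoiding $\widetilde Z$ at integer times $n \ge 0$ is equivalent to the full orbit $\{g_t x : t \ge 0\}$ avoiding $Z$. The main obstacle in the whole argument is the uniform transversality estimate used in the Cantor step: one must ensure, with constants independent of $n$, that the $\epsilon$-neighborhood of $Z$ pulled back by $f^{-n}$ meets each unstable ball of radius $r_n$ in a set coverable by a controlled number of balls of radius $r_{n+1}$. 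This is where compactness of $Z$ and the $C^1$ hypothesis both play an essential role, and where the transversality assumption does the real work.
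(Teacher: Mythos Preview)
The paper does not prove Theorem~\ref{T:Kle}; it is quoted from \cite{Kl2}. The paper's own contributions (Theorems~\ref{A1} and \ref{A2}, via Theorems~\ref{T:main-discrete} and \ref{T:main-continuous}) assume the stronger $G_f^{\max}$-transversality and conclude HAW, using the hyperplane percentage game of \S\ref{S:HPW} rather than a direct Cantor-set construction. Your reduction of part~(2) to part~(1) by thickening $Z$ along the flow is essentially the argument the paper uses when deducing Theorem~\ref{T:main-continuous} from Theorem~\ref{T:main-discrete}.

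Your outline of part~(1) is in the spirit of \cite{Kl2}, but the covering estimate has a real gap. You set $r_n=c\lambda^{-n}$ with $\lambda$ strictly smaller than the minimal expanding eigenvalue modulus. Then the image of a ball $B$ of radius $r_n$ under $u\mapsto f^n(usvx_0)$ is a piece of $G_f$-leaf whose diameter is of order $r_n\|(\Ad f)^n|_{\Lie(G_f)}\|$, which tends to infinity. Such a long leaf segment can return to the $\epsilon$-neighborhood of $Z$ an unbounded number of times (the count is controlled only by the leaf length divided by the local injectivity radius), so the bad set inside $B$ is a union of many thin slabs, not one. Concretely, for $G=\SL_2(\RR)$, $\Gamma=\SL_2(\ZZ)$, $Z$ a single point, and $\mu$ the unique expanding eigenvalue, the number of slabs is $\asymp(\mu/\lambda)^n$; each slab touches at least one sub-ball of radius $r_{n+1}$, so once $(\mu/\lambda)^n>\lambda$ your ``$O(\lambda^{\dim G_f-1})=O(1)$ balls'' estimate fails and every sub-ball may be contaminated. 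The standard remedy is to tessellate by boxes whose side in each $\Ad f$-eigendirection scales like the reciprocal of the corresponding eigenvalue, so that forward images have uniformly bounded diameter; transversality then gives a bounded number of slabs per box, and one sends the thickness parameter to zero to push the Cantor-set dimension up to $\dim G_f$.
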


We note that   the above theorem is meaningful only if $\Ad f $ (resp., $\Ad g_1$) has at least one eigenvalue of modulus $>1$; otherwise the groups $G_f$ (resp., $G_{F^+}$) are trivial, and the above transversality conditions are never satisfied.

\smallskip

The abundance of points with non-dense orbits has also been established when \linebreak $f\in \GL_n(\RR)\cap \Ma_{n\times n}(\ZZ)$ is an endomorphism of the $n$-dimensional torus. Indeed, generalizing a result of Dani  \cite{Da3}, Broderick, Fishman and Kleinbock \cite{BFK} proved the following:
\begin{theorem} \label{T:BFS}
  Let $X=\TT^n$, and let $f\in \GL_n(\RR)\cap \Ma_{n\times n}(\ZZ)$ be an endomorphism of $X$ with at least one eigenvalue of modulus bigger than $1$. Then for any countable subset $Z\subset X$, the set $E(f,Z)$ is  thick.
\end{theorem}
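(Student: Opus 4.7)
The strategy is to prove the stronger statement that for every $z\in\TT^n$, the set $E(f,\{z\})$ is hyperplane absolute winning (HAW) on $\TT^n$, then deduce the theorem from the two standard properties that the class of HAW subsets of $\TT^n$ is closed under countable intersections and consists of thick sets. Writing $Z=\{z_j\}_{j\in\NN}$ gives $E(f,Z)=\bigcap_j E(f,\{z_j\})$, which is therefore HAW and hence thick.

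To play the HAW game toward $E(f,\{z\})$, I would replace the target by the set $E_\epsilon(z):=\{x\in\TT^n:\dist(f^kx,z)\ge\epsilon\ \forall k\ge0\}$ for a fixed small $\epsilon>0$; the inclusion $E_\epsilon(z)\subset E(f,\{z\})$ makes winning toward $E_\epsilon(z)$ sufficient. The dynamical input comes from the eigenvalue hypothesis: passing to an iterate of $f$ if necessary, one obtains an invariant real decomposition $\RR^n=E^u\oplus E^{cs}$ with $\dim E^u\ge1$, on which $f|_{E^u}$ is uniformly expanding by some factor $\Lambda>1$ and $f|_{E^{cs}}$ does not expand exponentially.

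Alice's strategy handles the forbidden iterates $k\ge0$ one scale at a time. At stage $i$, Bob plays a ball $B_i$ of radius $r_i$, and Alice must ensure the eventual intersection avoids $f^{-k}(B(z,\epsilon))$ for every $k$. The key geometric observation is that $f^{-k}(B(z,\epsilon))\cap B_i$, lifted to a local chart in $\RR^n$, lies in an $O(\epsilon/\Lambda^k)$-neighborhood of an affine hyperplane through $f^{-k}(z)$ whose direction contains $E^{cs}$: the forward map $f^k$ stretches $B_i$ into a pancake of thickness $\mathrm{poly}(k)\cdot r_i$ along $E^{cs}$ and length $\gtrsim\Lambda^k r_i$ along $E^u$, so the condition that a point of $B_i$ be sent into $B(z,\epsilon)$ pins down only its $E^u$-coordinate up to error $O(\epsilon/\Lambda^k)$. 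Alice accordingly partitions $\NN_{\ge0}$ into finite blocks $K_i$---the iterates first becoming visible at scale $r_i$, of uniformly bounded cardinality once $\beta$ is chosen small enough---and removes at stage $i$ a single hyperplane slab of thickness $\le\beta r_i$ engulfing $\bigcup_{k\in K_i}f^{-k}(B(z,\epsilon))\cap B_i$.

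The chief obstacle is the bookkeeping: with a single $\beta$ depending only on $f$ and $\epsilon$, one must verify that the blocks $K_i$ exhaust $\NN_{\ge0}$, that the finitely many candidate hyperplanes at each stage consolidate into a single slab of admissible thickness, and that no iterate falls through the cracks. Further subtlety appears in the boundary regime $E^{cs}=0$, where $f$ is fully expanding and the preimages of $z$ in $B_i$ form a full-dimensional lattice-like cloud rather than lying near a hyperplane; there Alice must disperse her cuts across several consecutive rounds, cycling through distinct coordinate directions and using quantitative estimates on the spacing of the preimage lattice $f^{-k}\ZZ^n\cap B_i$ to consolidate each round's cut into a single hyperplane neighborhood.
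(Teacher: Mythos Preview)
Your reduction to single points via the countable-intersection property of HAW sets is exactly right, and matches how the paper derives Theorem~\ref{T:BFS} from its Theorem~A1. The gap is in your proposed winning strategy for $E(f,\{z\})$.

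The step ``removes at stage $i$ a single hyperplane slab of thickness $\le\beta r_i$ engulfing $\bigcup_{k\in K_i}f^{-k}(B(z,\epsilon))\cap B_i$'' does not work. Even when all the preimage strips share the direction $E^{cs}$, for distinct $k\in K_i$ they sit at different locations in $B_i$ (the points $f^{-k}(z)$ move around), so they are parallel slabs that cannot in general be merged into one slab of width $\le\beta r_i$. This is not a boundary phenomenon peculiar to the fully expanding case; it is the generic situation whenever $\#K_i>1$, which it must be once you insist on exhausting all of $\NN_{\ge 0}$. The paper's remedy is to abandon the vanilla HAW game and use the \emph{hyperplane percentage game} (Lemma~\ref{L:HPW2}): Alice is allowed to name finitely many hyperplane neighborhoods, and Bob's next ball must avoid at least half of them. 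Over $\ell$ consecutive stages Alice can then eliminate up to $2^\ell$ iterates, and the paper chooses $\ell$ so that each block $\mathcal N_k$ in \eqref{E:Nk} has fewer than $2^\ell$ elements (see \eqref{E:stage2}--\eqref{E:stage}).

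A second point: you work with the full unstable subspace $E^u$ and the smallest expansion rate $\Lambda$ there, whereas the paper works with $G_f^{\max}$, the direction of \emph{maximal} expansion. For a single-point target this choice is what makes the bookkeeping close: the hyperplane $L_{W,n}$ of Proposition~\ref{P:linear-main} is essentially normal to a top-eigenvalue direction, so the preimage has width comparable to $\epsilon/\|f^n\|$, and the scales $r_i$ are matched to $\|f^n\|$ rather than to $\Lambda^n$. This also guarantees that, with the constraint $\|f^n\|\le r_0/r_i$, the image $f^n(B_i)$ stays within one fundamental domain and the preimage in $B_i$ is a single strip---addressing the branching you worried about in the final paragraph.
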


In fact, both in  \cite{Da3} and in \cite{BFK} a stronger property of those sets was established: namely, they were shown to be winning in the sense of Schmidt. Later this property was upgraded by Broderick, Fishman and Simmons \cite{BFS} to an even stronger hyperplane absolute winning property (abbreviated as HAW).  See \cite{BFKRW, KW3}, as well as  \S\ref{haw}, for definitions and discussion,   and \cite{KW1, HY, AGK, Ts, Wu1, Wu2,  GW, AGGL, Du} for other recent results involving winning properties of exceptional sets in dynamical systems. We point out  that one of the important advantages of this strengthening is the fact that a countable intersection of winning (resp., HAW) sets is also winning (resp., HAW).

\smallskip

Our first main theorem (Theorem \ref{A1} below) gives a unified treatment of Theorem  \ref{T:BFS} and part (1) of Theorem  \ref{T:Kle}. To include both left translations on \hs s and toral endomorphisms, we establish our result for affine maps. Let $G$ be a Lie group (not necessarily connected) with Lie algebra $\Lg$, $\Gamma\subset G$  a discrete subgroup, and $X=G/\Gamma$. Let $\Aut(G,\Gamma)$ denote the set of automorphisms of $G$ sending $\Gamma$ into $\Gamma$. A map $f:X\to X$ is said to be {\sl affine} if there exist $g\in G$ and $\sigma\in\Aut(G,\Gamma)$ such that
\eq{aff}{f(h\Gamma)=g\sigma(h)\Gamma, \qquad \forall \, h\in G.}
Let $\sigma_f$ be the automorphism of $G$ given by
\eq{auto}{\sigma_f(h)=g\sigma(h)g^{-1}, \qquad h\in G,}
and let $d\sigma_f$ be the induced automorphism of $\Lg$.
(It will be shown that $d\sigma_f$ is uniquely determined by $f$, see Lemma \ref{L:iterations}.)
In \S\ref{main} for an affine map $f$ we, similarly to \equ{ehs}, define  the {\sl  expanding horospherical subgroup} $G_f$ of $G$ relative to $f$, and also introduce a subgroup $G_f^{\max}\subset G_f$, which we call the {\sl maximally expanding horospherical subgroup} of $G$ relative to $f$. Roughly speaking, the latter subgroup corresponds to directions in $\Lg$ in which $d\sigma_f$ exhibits the maximal rates of expansion. For example,  if $d\sigma_f$ is diagonalizable with at least one eigenvalue of modulus bigger than $1$, then the Lie algebra of $G_f^{\max}$ is
the sum of eigenspaces corresponding to eigenvalues of $d\sigma_f$ with maximal absolute value. See \S\ref{poly} for a formal approach,  and \S\ref{affine} for a precise definition. This subgroup replaces $G_f$ in the transversality conditions of Theorem \ref{T:Kle}, which makes it possible to upgrade its conclusion to the winning property of $E(f,Z)$, as follows:

\begin{customthm}{A1}\label{A1}
\textit{Let $G$ be a Lie group, $\Gamma\subset G$ a discrete subgroup, $X=G/\Gamma$, and $f$ an affine map on $X$. Then for any $G_f^{\max}$-transversal $C^1$ submanifold $Z\subset X$ the set $E(f,Z)$ is HAW.}
\end{customthm}

We remark that if $f$ is an affine map and the assumption of $G_f^{\max}$-transversality of $Z$ is replaced by a weaker assumption of $G_f$-transversality, it is possible to use the methods of \cite{Kl2} to show that the set $E(f,Z)$ is thick. However in order to prove the HAW property  (or even  regular winning in the sense of Schmidt) $G_f$-transversality does not seem to be enough, and one has to require transversality with respect to $G_f^{\max}$.
\smallskip

In the case $G=\RR^n$, $\Gamma=\ZZ^n$ and $f\in \GL_n(\RR)\cap \Ma_{n\times n}(\ZZ)$, we note that $d\sigma_f$ coincides with $f$, and Theorem \ref{A1} implies the following strengthening of Theorem \ref{T:BFS} and the subsequent work in \cite{BFS}:
\begin{corollary}
  Let $X=\TT^n$, and let $f\in \GL_n(\RR)\cap \Ma_{n\times n}(\ZZ)$ be an endomorphism of $X$.
  Then for any $G_f^{\max}$-transversal $C^1$ submanifold $Z\subset \TT^n$ the set $E(f,Z)$ is HAW.
\end{corollary}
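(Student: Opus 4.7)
The plan is to derive the corollary directly from Theorem~\ref{A1} by translating the torus setup into the paper's general framework for affine maps. Set $G=\RR^n$ (a connected abelian Lie group with $\Lg=\RR^n$) and $\Gamma=\ZZ^n$, so that $X=G/\Gamma=\TT^n$. Since $f\in\GL_n(\RR)\cap\Ma_{n\times n}(\ZZ)$ is a linear isomorphism of $\RR^n$ preserving $\ZZ^n$, it belongs to $\Aut(G,\Gamma)$, and the induced self-map of $\TT^n$ is exactly $f$.

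Next, I would exhibit $f\colon\TT^n\to\TT^n$ as an affine map in the sense of \equ{aff} by taking the translation element to be the identity $0\in G$ and $\sigma=f$; then \equ{aff} reads $f(h+\ZZ^n)=0+f(h)+\ZZ^n$, which is tautologically true. Substituting into \equ{auto} and using commutativity of $G$ gives $\sigma_f(h)=0+\sigma(h)-0=f(h)$, so $\sigma_f=f$ as an automorphism of $\RR^n$. Identifying $\Lg=\RR^n$ with $G$ through the (identity) exponential map, the differential $d\sigma_f$ coincides with $f$, exactly as announced in the paragraph preceding the corollary. Consequently $G_f^{\max}\subset\RR^n$ is precisely the sum of generalized eigenspaces of $f$ associated with eigenvalues of maximal absolute value; this is the zero subspace unless that maximum exceeds $1$, in which case the transversality hypothesis is a genuine condition on $Z$.

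With this dictionary in place, the $G_f^{\max}$-transversality hypothesis on the $C^1$ submanifold $Z\subset\TT^n$ is literally the hypothesis of Theorem~\ref{A1} applied to the affine map $f$, and the conclusion that $E(f,Z)$ is HAW is exactly what Theorem~\ref{A1} delivers. There is no serious obstacle, as the entire argument is a matter of unpacking definitions; the only step worth pausing on is checking that the general constructions of $\sigma_f$, $d\sigma_f$, and $G_f^{\max}$ (which in the paper require some care) collapse to the elementary linear-algebra objects on the torus, and this is immediate from the commutativity of $G=\RR^n$.
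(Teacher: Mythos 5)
Your proof is correct and follows the same route as the paper, which treats the corollary as an immediate consequence of Theorem~\ref{A1} once one observes that $d\sigma_f=f$ for $G=\RR^n$, $\Gamma=\ZZ^n$. One small imprecision that does not affect the argument: $\Lg_f^{\max}$ is \emph{not} in general the sum of generalized eigenspaces for eigenvalues of maximal modulus; by \equ{maxexp} and the surrounding discussion it is the span of actual eigenvectors lying in the largest Jordan blocks among those with maximal-modulus eigenvalue (the two notions coincide only when those blocks are all $1\times 1$, e.g.\ when $f$ is diagonalizable).
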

As an example: if $f$ is uniformly expanding, such as $\x\mapsto m\x$ for a nonzero $m\in\ZZ$, then $G_f^{\max} = \RR^n$, and thus $E(f,Z)$ is hyperplane absolute winning for any  proper $C^1$ submanifold  $Z\subset \TT^n$.

\smallskip

The case $\sigma = \Id$ of Theorem \ref{A1}  (that is, when $f$ is a left translation by an element $g$ of $G$) can be used to derive a continuous version of the above theorem, that is, a statement similar to part (2) of Theorem  \ref{T:Kle}. Here we will denote by $G_{F^+}^{\max}$ the maximally expanding horospherical subgroup $G_{g_1}^{\max}$  of $G$ relative to $g_1$.

\begin{customthm}{A2}\label{A2}
\textit{Let $G$, $\Gamma$ and  $X$ be as in Theorem \ref{A1}. Let $F=\{g_t:t\in\RR\}$ be a one-parameter subgroup of $G$, and let $Z$ be an $(F,G_{F^+}^{\max})$-transversal $C^1$ submanifold of $X$. Then the set $E(F^+,Z)$ is HAW.}
\end{customthm}

Note that, in view of intersection properties of winning sets, the conclusion of the two theorems above will hold if $Z$ is replaced by a countable union of sets satisfying the above assumptions. Note also that the groups   $G_f^{\max}$ (resp., $G_{F^+}^{\max}$) are non-trivial if and only if $d\sigma_f$ (resp., $\Ad g_1$) has at least one eigenvalue of modulus $>1$. In the latter case  the transversality conditions in Theorems \ref{A1} and \ref{A2}
are definitely satisfied if $Z$ consists of a single point, and hence the conclusion of the two theorems holds
for countable sets $Z$.

\subsection{Nondense geodesics on locally symmetric spaces.}
Theorems \ref{A1} and \ref{A2} will be derived from their more general technical versions, Theorems \ref{T:main-discrete} and \ref{T:main-continuous}, where we study the HAW property of the intersections of the sets $E(f,Z)$ and  $E(F^+,Z)$ with orbits of
certain subgroups $H \subset G$. The advantage of such a general set-up is that some important applications can be deduced from it. In particular, when $G$ is semisimple and $H$ is taken to be the maximal compact subgroup of $G$, Theorem \ref{T:main-continuous} has interesting applications to geodesic flows on locally symmetric spaces.

\smallskip

Let $Y$ be a locally symmetric space of noncompact type, and let $S(Y)$ denote its unit tangent bundle, whose fiber $S_y(Y)$ over a point $y\in Y$ is the unit sphere in $T_yY$ centered at the origin. For $\xi\in S(Y)$, let $\gamma(\xi)$ denote the geodesic line through the base point of $\xi$ in the direction $\xi$. We will use Theorem \ref{T:main-continuous} to prove the following result.

\begin{customthm}{B1}\label{B1}
\textit{Let $Y$ be a locally symmetric space of noncompact type, $y\in Y$, and $Z$ a countable subset of $Y\sm\{y\}$. Then the set
$$\left\{\xi\in S_y(Y): \overline{\gamma(\xi)}\cap Z=\emptyset\right\}$$
  is thick in $S_y(Y)$.}
\end{customthm}

Theorem \ref{B1}, together with Marstrand's slicing theorem, implies that for any countable subset $Z\subset Y$, the set $\{\xi\in S(Y): \overline{\gamma(\xi)}\cap Z=\emptyset\}$ is thick in $S(Y)$. For locally symmetric spaces of constant negative curvature (which corresponds to the case $G=\SO(n,1)$), the latter result for finite $Z$ is given in \cite[Corollary 4.4.4]{Kl2}; see also a related work by Dolgopyat \cite{Do}.

\smallskip

Note that if $Y$ has rank one and has finite volume, the geodesic flow on $S(Y)$ is ergodic. However, it is never ergodic if the rank of $Y$ is greater than one. Mautner \cite{Mau} showed that $S(Y)$ can be naturally partitioned into closed submanifolds that are invariant under the geodesic flow (see also \cite{KM2}). If $Y$ has finite volume, the geodesic flow is ergodic on a generic submanifold. We refer to a submanifold in this partition as an \textsl{ergodic submanifold} (see \S \ref{S:geo} for definition). We will also prove the following theorem.

\begin{customthm}{B2}\label{B2}
\textit{Let $Y$ be a locally symmetric space of noncompact type, $\E\subset S(Y)$ an ergodic submanifold, and $\,\Xi\subset\E$ a finite subset. Then
there exists a closed subset of $\E$ that is invariant under the geodesic flow, does not intersect $\Xi$, and projects onto $Y$.}
\end{customthm}

Theorem \ref{B2} is motivated by the unpublished work of Burns and Pollicott \cite{BP1} and subsequent papers \cite{BSW,Schro,BS,Rei}, where hyperbolic manifolds and more general manifolds of nonpositive curvature are considered. However, in all the aforementioned papers the set
$\Xi$ consisted of a single point.  Theorem \ref{B2} seems to be new even in the case when $Y$ has rank one (in which case one has $\E=S(Y)$). See also \cite{Weil} for a related work concerning CAT(-1) spaces.

\subsection{Gaps between values of functions at integer points.}\label{gaps}

For the special case \linebreak $\ggm=\SL_2(\RR)/\SL_2(\ZZ)$, Theorem  \ref{A2} was already established in  \cite{KW3} by the third-named author and Weiss. That paper was in fact motivated by studying binary indefinite quadratic forms with  non-dense set of values at integer points, and contains the following result:

\begin{theorem}\label{T:KW}
The set of indefinite binary quadratic forms whose set of values at nonzero integer points misses a given countable set is thick in the space of all binary indefinite quadratic forms.
\end{theorem}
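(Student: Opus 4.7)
The plan is to deduce Theorem~\ref{T:KW} from Theorem~\ref{A2} applied with $G = \SL_2(\RR)$, $\Gamma = \SL_2(\ZZ)$, $X = G/\Gamma$, and $F = A = \{g_t = \diag(e^t, e^{-t}) : t \in \RR\}$. I would first identify the space of indefinite binary quadratic forms of discriminant $1$ with $A \backslash G$ via $Ah \mapsto Q_h$, where $Q_h(v) := Q_0(hv)$ and $Q_0(x,y) := xy$; arbitrary indefinite forms are obtained by positive scaling. For $s \in \RR \sm \{0\}$, the form $Q_h$ takes the value $s$ at some $n \in \ZZ^2 \sm 0$ exactly when the lattice $h\ZZ^2$ meets the hyperbola $\{v \in \RR^2 : Q_0(v) = s\}$. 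Picking the base point $v_s := (1,s)^\T$ and using that $A$ acts transitively on each branch of this hyperbola while fixing $Q_0$, this condition becomes: the $A$-orbit of $h\Gamma$ in $X$ meets
\[
Z_s := \bigcup_{n \in \ZZ^2 \sm 0} Z_s^{(n)}, \qquad Z_s^{(n)} := \{h'\Gamma \in X : h'n = v_s\},
\]
a countable union of closed $1$-dimensional $C^1$ submanifolds of $X$, each being the image of a right coset of the stabilizer of $v_s$ in $G$.

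Next I would verify the transversality hypotheses of Theorem~\ref{A2}. Under the left-invariant identification $T_{h'\Gamma}X \cong \Lg = \mathfrak{sl}_2(\RR)$, a direct computation of the Lie-algebra stabilizer of $v_s$ gives
\[
T_{h'\Gamma} Z_s^{(n)} = \RR\,\xi_s, \qquad \xi_s = \begin{pmatrix} -s & 1 \\ -s^2 & s \end{pmatrix},
\]
independently of $n$ and $h'$. With $H = \diag(1,-1)$ spanning $\Lie(F)$ and the standard nilpotent generators $E^\pm$ spanning $\Lie(G_{F^\pm}) = \Lie(G_{F^\pm}^{\max})$, a brief determinant check shows that for every $s \ne 0$ each of $\{H, E^+, \xi_s\}$ and $\{H, E^-, \xi_s\}$ is a basis of $\Lg$; hence $Z_s^{(n)}$ is simultaneously $(F, G_{F^+}^{\max})$- and $(F, G_{F^-}^{\max})$-transversal.

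Theorem~\ref{A2}, applied both to $F^+$ and, after reversing time, to $F^-$, then gives that both $E(F^+, Z_s^{(n)})$ and $E(F^-, Z_s^{(n)})$ are HAW in $X$, and their intersection is contained in $\{x \in X : Ax \cap Z_s^{(n)} = \emptyset\}$. For a given countable $\{s_i\} \subset \RR \sm \{0\}$ I would intersect over all $i$ and all $n \in \ZZ^2 \sm 0$; stability of HAW under countable intersections yields an HAW, hence thick, subset of $X$ consisting of points $h\Gamma$ whose full $A$-orbit avoids $\bigcup_i Z_{s_i}$, equivalently whose form $Q_h$ represents none of the $s_i$ at nonzero integer vectors. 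Thickness in the $3$-manifold $X$ descends through the $1$-dimensional left quotient by $A$ (and the discrete quotient by $\Gamma$) to thickness in the $2$-dimensional form space $A \backslash G$; scaling in the discriminant then extends the conclusion to all indefinite forms.

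The main obstacle is the borderline value $s = 0$, for which $\xi_0 = E^+$ and $(F, G_{F^+}^{\max})$-transversality fails. This is not fatal: the set of forms representing $0$ at some nonzero integer vector is a countable union of codimension-one subvarieties of form space, so removing it does not affect thickness, and forms that already avoid every nonzero $s_i$ generically also avoid $0$. A secondary technical point is the descent of thickness through the $A$- and $\Gamma$-quotients; this is routine via a Marstrand-type slicing argument, since each quotient map is a local product with a lower-dimensional factor.
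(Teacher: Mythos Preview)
Your argument contains a genuine gap in the passage from orbit\nobreakdash-closure avoidance to value\nobreakdash-closure avoidance. Having $\overline{Fh\Gamma}\cap Z_s=\varnothing$ does \emph{not} by itself imply $s\notin\overline{Q_h(\ZZ^2\sm 0)}$. If $Q_h(n_k)\to s$ with $n_k\in\ZZ^2\sm 0$, one can choose $t_k\in\RR$ with $g_{t_k}hn_k$ bounded in $\RR^2$, but the sequence $g_{t_k}h\Gamma$ need not have a convergent subsequence in $X$ when $Fh\Gamma$ is unbounded. This is precisely the content of Lemma~\ref{L:relation}: one only has
\[
E(F,Z_{\phi,s})\cap E(F,\infty)\ \subset\ X(\phi,s)\ \subset\ E(F,Z_{\phi,s}),
\]
so to conclude that $X(\phi,s)$ is HAW you must also intersect with the set $E(F,\infty)$ of lattices with bounded $F$-orbit. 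For $G=\SL_2(\RR)$ this set is HAW (Dani correspondence plus the HAW property of badly approximable numbers; cf.\ Theorem~\ref{T:conj}), so the repair is easy. But as written, your sentence ``their intersection is contained in $\{x:Ax\cap Z_s^{(n)}=\varnothing\}$, equivalently $Q_h$ represents none of the $s_i$'' conflates two different conditions: the theorem asks for $s_i\notin\overline{Q_h(\ZZ^2\sm 0)}$, not merely $s_i\notin Q_h(\ZZ^2\sm 0)$, and the former does not follow from $E(F,Z_s)$ alone.

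The same omission undermines your treatment of $s=0$. What is needed is $0\notin\overline{Q_h(\ZZ^2\sm 0)}$, not that $Q_h$ fails to \emph{represent} $0$; discarding the countable family of forms that represent $0$ is beside the point, and ``forms that already avoid every nonzero $s_i$ generically also avoid $0$'' is neither precise nor justified. The correct argument (Lemma~\ref{L:relation}(1)) is that, by Mahler's criterion, any lattice $h\ZZ^2$ with $0\in\overline{Q_h(\ZZ^2\sm 0)}$ has unbounded $F$-orbit; hence $E(F,\infty)\subset X(\phi,0)$, and the case $s=0$ is absorbed once you intersect with $E(F,\infty)$.

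With this fix in place your route matches the paper's: treating $Q_0$ as a GIBF and invoking Theorem~\ref{T:app1-main}, whose proof likewise combines Theorem~\ref{A2} (for $E(F,Z_{\phi,a})$, $a\ne 0$) with the HAW property of $E(F,\infty)$, and then descends to $\OO(\phi)$ via the submersion lemma (Lemma~\ref{L:haw-proj}) rather than a slicing argument.
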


More generally, given $\phi\in C(\RR^n)$ one can consider the $\SL_n(\RR)$-orbit of $\phi$:
\eq{orbit}{\OO(\phi):=\{\phi\circ g:g\in{\SL_n(\RR)}\}.}
Then the stabilizer $\Aut(\phi)$ of $\phi$ is a closed subgroup of $\SL_n(\RR)$, and hence the orbit $\OO(\phi)\cong \Aut(\phi)\backslash\SL_n(\RR)$ has a natural smooth manifold structure.
Theorem \ref{T:KW} dealt with the case $n=2$ and $\phi(x_1,x_2) = x_1x_2$.
See \S \ref{forms} for more background on this problem.

\smallskip

Using Theorem \ref{A2}, we are able to prove a substantial generalization of Theorem \ref{T:KW}. Let $n\ge 2$, and fix a norm $\|\cdot\|$ on $\RR^n$. We say that a continuous function $\phi: \RR^n \rightarrow \RR$ is a  \textsl{generalized indefinite binary form}, abbreviated as {\sl GIBF},
if there exists a nontrivial decomposition $\RR^n=U\oplus W$ such that the following three conditions hold:
\begin{itemize}
  \item[(IB-1)]  $\phi$ is invariant under the one-parameter transformation group
\begin{equation}\label{E:gt}
F=\{g_t:t\in\RR\}, \ \text{ where } \ g_t=e^{t/p}\,\id_{U}\oplus e^{-t/q}\,\id_{W}, \ p=\dim U, \ q=\dim W.
\end{equation}
  \item[(IB-2)] $\phi(0)=0$, and there is a continuous function $N:\RR\to[0,\infty)$ with $N(0)=0$ such that
  $$N\big(\phi(\pmb{u}+\pmb{w})\big)\ge\|\pmb{u}\|^p\|\pmb{w}\|^q,  \qquad \forall \, \pmb{u}\in U,\ \pmb{w}\in W.$$
  \item[(IB-3)]  For any $a\ne 0$, the set $\phi^{-1}(a)$ is contained in a countable union of  $F$-invariant $C^1$ submanifolds of $\RR^n$ that are both $U$-transversal and $W$-transversal\footnote{Here the transversality is understood in the sense of the action of $U$ and $W$ on $\RR^n$ by translations.}.
\end{itemize}
It is clear that the above property is preserved by linear changes of coordinates, and thus if $\phi$ is a GIBF, its orbit \equ{orbit} consists entirely of GIBFs.
{The binary form $x_1x_2$ mentioned above is clearly a GIBF, {and thus the same is true for all indefinite binary forms}. The polynomials listed below are also GIBF{s}:
\begin{align}
& x_1(x_2^{n-1}+\cdots+x_n^{n-1}), && n \text{ odd}; \label{E:poly1}\\
& (x_1^2+x_2^2)(x_3^{n-2}+\cdots+x_n^{n-2}), && n \text{ even}; \label{E:poly2}\\
& (x_1^2+\cdots+x_{n/2}^2)(x_{(n/2)+1}^2+\cdots+x_n^2), && n \text{ even}; \label{E:poly3}\\
& x_1x_2^2+x_1^3x_3^6 , && n=3. \label{E:poly4}
\end{align}
For the verification of the above claim and for more examples of GIBFs, see \S\ref{Examples}.}
As a non-polynomial example, if the norm $\|\cdot\|$ is $C^1$ on $\RR^n\smallsetminus(U\cup W)$, then the function
$$\phi(\pmb{u}+\pmb{w}) = \|\pmb{u}\|^p\|\pmb{w}\|^q, \qquad \pmb{u}\in U,\ \pmb{w}\in W$$
is a GIBF (see Example \ref{example1}).

\smallskip
Now we are ready to generalize Theorem  \ref{T:KW} to the set-up of   gaps between values of these functions at nonzero integer points:

\begin{customthm}{C}\label{C}
\it  Let $n\ge 2$, and let $\phi$ be a GIBF. Then for any countable subset $A$ of $\RR$, the set
$$\left\{\psi\in\OO(\phi):\overline{\psi(\ZZ^n\smz)}\cap A=\emptyset\right\}$$
is hyperplane absolute winning.
\end{customthm}

\subsection{Organization of the paper.}

In \S \ref{main}  we state our main technical results, Theorem  \ref{T:main-discrete} and Theorem \ref{T:main-continuous}, and deduce the latter from the former. Section \ref{hyp} is devoted to the study of  behavior of certain hyperplanes under linear transformations, which is utilized in the subsequent section for the proof of  Theorem \ref{T:main-discrete}. There we use the hyperplane percentage game, a modification of the hyperplane absolute game introduced   in \cite{BFS} (see \S\ref{S:HPW}), a careful analysis of the local behavior of the multiplication on $G$ (\S\ref{lie}), and an approximation of pieces of submanifolds $Z$ by neighborhoods of hyperplanes (\S\ref{z}). In \S\ref{S:geo} we apply Theorem \ref{T:main-continuous} to geodesic flows on locally symmetric spaces, proving  Theorems \ref{B1} and \ref{B2}.  Then in \S\ref{forms} we discuss another application, in which we put $X = \SL_n(\RR)/\SL_n(\ZZ)$ and establish a general result (Theorem \ref{T:app1-main}) concerning  functions whose values at integer points are not dense.  Theorem \ref{C} is derived from Theorem \ref{T:app1-main} in \S\ref{gibfs}, and then we describe  a number of examples of generalized indefinite binary forms.

\section{Statement of the main theorems}\label{main}

\subsection{HAW subsets of a manifold}\label{haw}

Our main theorems are stated in terms of the notion of  hyperplane absolute winning (HAW) subsets of smooth manifolds introduced in \cite{KW3}.
Before defining this game, for comparison let us
recall Schmidt's $(\alpha,\beta)$-game  \cite{Sc1}. It
involves two parameters $\alpha,\beta\in(0,1)$  and is played by two players Alice and Bob on a Euclidean space $V$ with a target set $S\subset V$. Bob starts the game by choosing a closed ball $B_0$ in $V$ with center $x_0$ and radius $r_0$. After Bob chooses a closed ball $B_i = B(x_i,{r}_i) $, Alice chooses  $A_i = B(x_i', {r}_i'){\subset B_i}$ with ${r}'_i=\alpha {r}_i$,
 and then Bob chooses  $B_{i+1} = B(x_{i+1},{r}_{i+1}) {\subset A_i}$ with ${r}_{i+1}=\beta {r}'_{i} $
etc. Alice wins the game if the unique point of intersection
$$\bigcap_{i=0}^\infty A_i=\bigcap_{i=0}^\infty B_i$$ belongs to $S$, and Bob wins otherwise. The set $S$ is \textsl{$(\alpha,\beta)$-winning} if Alice has a winning strategy, is \textsl{$\alpha$-winning} if it is $(\alpha,\beta)$-winning for any $\beta\in(0,1)$, and is \textsl{winning} if it is $\alpha$-winning for some $\alpha$. Schmidt \cite{Sc1} proved that winning sets are thick, and that  a countable intersection of $\alpha$-winning sets is again $\alpha$-winning.

A more recent  development of the theory started with a paper of McMullen  \cite{Mc} who introduced the notion of {\sl absolute winning sets}. Those were   generalized in \cite{BFKRW} to {\sl $k$-dimensionally absolute winning} for any $0 \le k < \dim V$. In particular,  the hyperplane absolute game (the case $k = \dim V - 1$) is played on an open subset $U$ of  $V$ as follows. Again, there are two players called  Alice and Bob, and a target set $S \subset U$. Let
$\beta \in \left(0, \frac13 \right)$; Bob starts the game by choosing a closed Euclidean ball $B_0$ contained in $U$ of radius ${r}_0$.
For an affine hyperplane $L\subset V$ and $r>0$, we denote the ${r}$-neighborhood of $L$ by
$$L^{({r})}:=\{v\in V:\mathrm{dist}(v,L)<{r}\}.$$
After Bob chooses a closed Euclidean ball $B_i\subset U$ of radius ${r}_i$, Alice chooses a hyperplane neighborhood $L_i^{({r}'_i)}$ with ${r}'_i\le\beta {r}_i$,
and then Bob chooses a closed ball $B_{i+1}\subset B_i\setminus L_i^{({r}'_i)}$ of radius ${r}_{i+1}\ge\beta {r}_i$. Alice wins the game if
$$\bigcap_{i=0}^\infty B_i\cap S\ne\emptyset.$$ The set $S$ is   \textsl{$\beta$-hyperplane absolute winning   on $U$}, abbreviated as $\beta$-HAW,   if Alice has a winning strategy,
and is \textsl{HAW on $U$} if it is $\beta$-HAW for any $\beta\in(0,\frac{1}{3})$. It is easy to see that HAW sets are winning in the sense of Schmidt. Moreover, it is proved in \cite{BFKRW} that the property of being hyperplane absolute winning is invariant under $C^1$ diffeomorphisms: if $\varphi:U\to V$ is a $C^1$ diffeomorphism onto an open subset $\varphi(U)$ of $V$, then $S$ is HAW on $U$ if and only if $\varphi(S)$ is HAW on $\varphi(U)$. In particular, the class of HAW sets is independent of the inner product on $V$.

\smallskip
The aforementioned property, as shown in   \cite{KW3},  can be used to define the notion of HAW sets for subsets of $C^1$  manifolds. Namely, let $M$ be a $C^1$ manifold, and let $\{(U_\alpha, \varphi_\alpha)\}$ be a $C^1$ atlas, that is, $\{U_\alpha\}$ is an open cover of $M$, and each $\varphi_\alpha$ is a $C^1$ diffeomorphism from $U_\alpha$ onto the open subset $\varphi_\alpha(U_\alpha)$ of a Euclidean space $V$. A subset $S\subset M$ is said to be \textsl{HAW} if for each $\alpha$, $\varphi_\alpha(S\cap U_\alpha)$ is HAW on $\varphi_\alpha(U_\alpha)$. The $C^1$ invariance implies that the definition is independent of the choice of the atlas. Moreover,
we can summarize the above discussion as follows:
\begin{itemize}
\item  HAW subsets of a $C^1$ manifold are thick;
\item  a countable intersection of HAW subsets of a $C^1$ manifold is again HAW;
\item let $\varphi:M\to N$ be a diffeomorphism between $C^1$ manifolds; then $S\subset M$ is HAW if and only if $\varphi(S)$ is HAW.
\end{itemize}

For the proof of Theorems \ref{B1} and \ref{C} we will also need the following lemma.

\begin{lemma}\label{L:haw-proj}
Let $\varphi:M\to N$ be a surjective $C^1$ submersion of $C^1$ manifolds. If $S\subset M$ is HAW, then so is $\varphi(S)\subset N$.
\end{lemma}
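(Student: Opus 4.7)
The plan is to reduce, via the local normal form for submersions, to a statement about linear projections, and then to establish that statement by a coupled-game argument. First I would exploit the $C^1$-invariance and locality of HAW (a subset is HAW iff its intersection with each chart of a covering atlas is HAW). For each $y_0\in N$ pick $x_0\in\varphi^{-1}(y_0)$ (using surjectivity) and invoke the submersion theorem to find $C^1$ charts near $x_0$ and near $y_0$ in which $\varphi$ becomes the standard projection $\pi:V\times W\to V$, with $V\subset\RR^n$ and $W\subset\RR^k$ open and $k=\dim M-\dim N$. As such projection charts cover $N$, it suffices to prove the following \emph{key claim}: if $\tilde S\subset V\times W$ is HAW on $V\times W$, then $\pi(\tilde S)$ is HAW on $V$.

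To prove the key claim, fix $\beta\in(0,\tfrac13)$ and construct a $\beta$-winning strategy for Alice on $V$ with target $\pi(\tilde S)$. Alice will run, as a subroutine, an auxiliary $\tilde\beta$-HAW game on $V\times W$ against $\tilde S$ using her winning strategy $\tilde\sigma$ there, for a sufficiently small $\tilde\beta<\beta$. Given Bob's external ball $B_i=B(c_i,r_i)$, Alice tracks an internal ball $\tilde B_i:=\tilde B((c_i,w_i),r_i)\subset V\times W$ with $\pi(\tilde B_i)=B_i$, where the fiber coordinate $w_i\in W$ is updated inductively (starting from any $w_0$ making $\tilde B_0\subset V\times W$). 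Feeding $\tilde B_i$ into $\tilde\sigma$ produces an internal hyperplane neighborhood $\tilde L_i^{(\tilde r_i')}$ with $\tilde r_i'\le\tilde\beta r_i$ and unit normal $(\alpha_i,\gamma_i)\in\RR^n\oplus\RR^k$. From these data, Alice must produce a legal external move $L_i^{(r_i')}$ with $r_i'\le\beta r_i$, and then, after Bob's response $B_{i+1}$, choose $w_{i+1}$ so that the internal constraint $\tilde B_{i+1}\subset\tilde B_i\setminus\tilde L_i^{(\tilde r_i')}$ is maintained.

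The main obstacle is the unrestricted tilt of $\tilde L_i$: $\tilde\sigma$ may return hyperplanes whose normal has arbitrary ratio between the base and fiber directions, while Alice's external move must be an honest hyperplane in $\RR^n$ of thickness at most $\beta r_i$. I would resolve this by a case split keyed to $|\alpha_i|$ and $|\gamma_i|$. In the \emph{vertical} case, when $|\alpha_i|$ exceeds a threshold slightly above $1/(1+\beta)$, Alice plays the slice $L_i:=\pi\bigl(\tilde L_i\cap(\RR^n\times\{w_i\})\bigr)$ with thickness $r_i':=\tilde r_i'/|\alpha_i|$ and keeps $w_{i+1}:=w_i$; the identity $\dist\bigl((c_{i+1},w_i),\tilde L_i\bigr)=|\alpha_i|\,\dist(c_{i+1},L_i)$ converts Bob's forced separation from $L_i$ into the required internal separation from $\tilde L_i$. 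In the \emph{horizontal} case, when $|\gamma_i|$ is the dominant component, the slab $\tilde L_i^{(\tilde r_i')}$ is thin in the fiber direction, so Alice plays an auxiliary hyperplane through $c_i$ of maximum thickness $\beta r_i$, which forces $r_{i+1}\le\tfrac{1-\beta}{2}r_i$ and creates enough room in $W$ around $w_i$ to pick $w_{i+1}\ne w_i$ pushing $(c_{i+1},w_{i+1})$ out of the thin slab. The numerical heart of the argument is to tune the two thresholds on $|\alpha_i|$ and $|\gamma_i|$ so that every unit $(\alpha_i,\gamma_i)$ falls into at least one case, and simultaneously to take $\tilde\beta$ small enough that both estimates are uniform; this balancing act, together with the fact that Alice's external thickness is hard-capped by $\beta r_i$ whereas the needed fiber maneuvering room depends on Bob's chosen $c_{i+1}$, is where the proof most requires care.

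Once the coupled strategy is in place, the winning condition transfers automatically: $\tilde\sigma$ being winning forces $\lim(c_i,w_i)=(c^*,w^*)\in\tilde S$, and projecting gives $c^*=\lim c_i\in\pi(\tilde S)$, so Alice wins the external game. Since $\beta\in(0,\tfrac13)$ was arbitrary, $\pi(\tilde S)$ is HAW on $V$, which proves the key claim and hence the lemma.
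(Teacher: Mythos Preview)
Your reduction to a linear projection via the submersion normal form and the coupled-game scheme with a case split on the tilt of the internal hyperplane are exactly the paper's approach. However, your specific implementation has a genuine gap in the ``horizontal'' case. You take the internal ball $\tilde B_i$ to have the \emph{same} radius $r_i$ as the external ball $B_i$ and run the two games in lockstep. But then the containment $\tilde B_{i+1}\subset\tilde B_i$ reads
\[
\|c_{i+1}-c_i\|^2+\|w_{i+1}-w_i\|^2\le (r_i-r_{i+1})^2,
\]
and Bob is free to place $B_{i+1}$ internally tangent to $B_i$, i.e.\ with $\|c_{i+1}-c_i\|=r_i-r_{i+1}$ (this is compatible with avoiding your auxiliary slab through $c_i$ as soon as $\beta<\tfrac13$). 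That forces $w_{i+1}=w_i$, leaving zero room for the fiber displacement you need to escape the thin slab. Your auxiliary hyperplane only bounds $r_{i+1}$ from above; it does not prevent Bob from using all of the available center displacement in the base.

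The paper fixes this with two adjustments you are missing. First, the internal ball is given radius $2r$ (twice the external), so that even when the base displacement is maximal there is still slack of order $r$ in the fiber; in the horizontal case the paper shifts the internal center by exactly $r$ along the fiber component of the normal, which suffices to clear the slab. Second, for this doubled internal ball to nest legally (and to satisfy $\tilde r_{k+1}\ge\tilde\beta\,\tilde r_k$), the paper does not advance the internal game at every external round: it waits until the external radius has shrunk by a factor $\beta/6$, allowing several external rounds per internal round and taking $\tilde\beta=\beta^2/6$. With these two modifications your case split and the rest of your argument go through essentially verbatim.
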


Note that this lemma is complementary to \cite[Proposition 6.1]{HKS}, where preimages of HAW sets under   surjective $C^1$ maps are considered. We postpone the proof of Lemma \ref{L:haw-proj} to Appendix \ref{S:A}.

\subsection{A polynomial associated with a linear transformation}\label{poly}

Let $V$ be a finite-dimensional real vector space, regarded as a real subspace of its complexification \linebreak $V_\CC:=V\otimes_\RR\CC$.
For a linear transformation $T$ on $V$, let $T_\CC$ denote the complex linear extension on $V_\CC$. Let $\Sp(T)$ be the set of eigenvalues of $T_\CC$, and let $$\rho=\rho(T)=\max_{\lambda\in\Sp(T)}|\lambda|$$
be the spectral radius of $T$.  Let
$$p_0(x)=\prod_{\lambda\in\Sp(T)}(x-\lambda)^{s(\lambda)}$$
be the minimal polynomial of $T$, and define
$$s=s(T)=\max_{\lambda\in\Sp(T),|\lambda|=\rho}s(\lambda).$$
The polynomial $p(x)$ given in the following lemma will play an important role.

\begin{lemma}\label{L:polynomial}
There exists a unique real polynomial $p(x)=p_T(x)$ with $\deg p(x)<\deg p_0(x)$ such that for every $\lambda\in\Sp(T)$, we have
$$p(x)\equiv\begin{cases}
  (x-\lambda)^{s-1}, & \text{if $|\lambda|=\rho$ and $s(\lambda)=s$;}\\
  0, & \text{otherwise,}
\end{cases} \mod (x-\lambda)^{s(\lambda)}.$$
\end{lemma}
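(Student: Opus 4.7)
The plan is to split the statement into two parts: existence and uniqueness of a complex polynomial $p(x)\in\CC[x]$ satisfying the stated congruences with $\deg p<\deg p_0$, and then an argument that this polynomial actually lies in $\RR[x]$.

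For the first part, I would invoke the Chinese Remainder Theorem. The factorization
\[
p_0(x)=\prod_{\lambda\in\Sp(T)}(x-\lambda)^{s(\lambda)}
\]
in $\CC[x]$ consists of pairwise coprime factors, so the natural ring homomorphism
\[
\CC[x]\big/\bigl(p_0(x)\bigr)\;\longrightarrow\;\prod_{\lambda\in\Sp(T)}\CC[x]\big/\bigl((x-\lambda)^{s(\lambda)}\bigr)
\]
is an isomorphism. The collection of residues on the right-hand side prescribed in the statement (namely $(x-\lambda)^{s-1}$ when $|\lambda|=\rho$ and $s(\lambda)=s$, and $0$ otherwise) therefore has a unique preimage, and selecting the representative of degree strictly less than $\deg p_0$ gives a unique such $p\in\CC[x]$.

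For the second part, I would use a conjugation-uniqueness trick. Since $T$ is real, $p_0(x)\in\RR[x]$, so $\Sp(T)$ is stable under complex conjugation and $s(\bar\lambda)=s(\lambda)$, $|\bar\lambda|=|\lambda|$. In particular the subset $\{\lambda:|\lambda|=\rho,\ s(\lambda)=s\}$ is conjugation-invariant. If $p(x)=\sum_k a_kx^k$ is the polynomial produced above, let $\bar p(x):=\sum_k\overline{a_k}\,x^k$. Taking complex conjugates of the congruence $p(x)\equiv(x-\lambda)^{s-1}\mod (x-\lambda)^{s(\lambda)}$ (when it applies) yields $\bar p(x)\equiv(x-\bar\lambda)^{s-1}\mod (x-\bar\lambda)^{s(\bar\lambda)}$, and similarly $\bar p(x)\equiv 0\mod(x-\bar\lambda)^{s(\bar\lambda)}$ at the other eigenvalues. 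Reindexing over $\Sp(T)$ via $\lambda\mapsto\bar\lambda$, we see that $\bar p$ satisfies exactly the same system of congruences that defined $p$, and $\deg\bar p=\deg p<\deg p_0$. The uniqueness established above then forces $\bar p=p$, i.e.\ $p\in\RR[x]$.

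There is no real obstacle here beyond bookkeeping; the result is essentially the Chinese Remainder Theorem applied to $\CC[x]/(p_0)$, with the realness of $p$ extracted from the $\Gal(\CC/\RR)$-equivariance of the prescribed data and the uniqueness clause. The only point requiring care is to check that the conditions selecting the eigenvalues where the residue is nonzero are themselves invariant under complex conjugation, which is immediate from $|\bar\lambda|=|\lambda|$ and $s(\bar\lambda)=s(\lambda)$.
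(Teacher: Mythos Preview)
Your proof is correct and follows essentially the same approach as the paper: invoke the Chinese Remainder Theorem for $\CC[x]/(p_0)$ to get existence and uniqueness of a complex $p$, then use the conjugation-invariance of the prescribed residues (via $s(\bar\lambda)=s(\lambda)$ and $|\bar\lambda|=|\lambda|$) together with uniqueness to conclude $\bar p=p$, hence $p\in\RR[x]$. The paper's argument is identical, just stated more tersely.
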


\begin{proof}
The existence and uniqueness of a complex polynomial $p(x)$ satisfying the required properties follow directly from the Chinese Remainder Theorem. Since the minimal polynomial $p_0(x)$ is real, we have $s(\bar{\lambda})=s(\lambda)$ for every $\lambda\in\Sp(T)$. Thus the complex conjugate of $p(x)$ also satisfies the requirement, and hence, the uniqueness implies that $p(x)$ is indeed real.
\end{proof}

We will need to consider the transformation $p(T)$. To understand it, let us consider the Jordan normal form of $T_\CC$. Let
\begin{equation}\label{E:basis}
\B=\{\be_{11},\ldots,\be_{1,s_1},\be_{21},\ldots,\be_{2,s_2},\ldots,\be_{r1},\ldots,\be_{r,s_r}\}
\end{equation}
be an ordered basis of $V_\CC$ such that the matrix $[T_\CC]_\sB$ of $T_\CC$ relative to $\sB$ is a Jordan matrix
\begin{equation}\label{E:Jordan0}
[T_\CC]_\sB=\diag\big(J(\lambda_1,s_1),\ldots,J(\lambda_r,s_r)\big),
\end{equation}
where $J(\lambda_i,s_i)$ is the Jordan block with eigenvalue $\lambda_i$ and size $s_i$. Then $s=\max_{|\lambda_i|=\rho}s_i$.
By reordering the vectors in $\sB$, we may assume that there is $r_0\in\{1,\ldots,r\}$ such that
\begin{equation}\label{E:reord}
\begin{cases}
1\le i\le r_0 \qquad & \Longrightarrow \qquad  |\lambda_i|=\rho \text{ and } s_i=s,\\
r_0<i\le r \qquad & \Longrightarrow \qquad \text{either }  |\lambda_i|<\rho \text{ or } s_i<s.
\end{cases}
\end{equation}
Then, it is straightforward to verify that
\begin{equation}\label{E:pT}
[p(T)_\CC]_\sB=\diag(\overbrace{E_{1s},\ldots,E_{1s}}^{n\text{\ times}},0,\ldots,0),
\end{equation}
where $E_{1s}$ is the $s\times s$ matrix with $1$ in the $(1,s)$-entry and $0$ elsewhere. In turn, this implies that
\begin{align}
\ker\big(p(T)\big)_\CC & =\left\{\sum_{i=1}^r\sum_{j=1}^{s_i}x_{ij}\be_{ij}:x_{ij}\in\CC, x_{1s}=\cdots=x_{r_0,s}=0\right\}, \label{E:ker} \\
\im\big(p(T)\big)_\CC & =\left\{\sum_{i=1}^{r_0}x_{i1}\be_{i1}:x_{i1}\in\CC\right\}. \label{E:im}
\end{align}
It also follows from \eqref{E:pT} that if $T$ is $\RR$-diagonalizable, and if $V_\lambda\subset V$ is the eigenspace corresponding to $\lambda\in\Sp(T)$, then $$p(T)\text{ is the projection onto }\bigoplus_{\lambda\in\Sp(T),\,|\lambda|=\rho}V_\lambda \ \text{ along }\bigoplus_{\lambda\in\Sp(T),\,|\lambda|<\rho}V_\lambda.$$

Let us also observe the following fact.

\begin{lemma}\label{L:abelian}
Assume that $V$ is a Lie algebra, and $T$ is an automorphism of $V$ with $\rho=\rho(T)>1$. Then $\im\big(p(T)\big)$ is an abelian subalgebra of $V$.
\end{lemma}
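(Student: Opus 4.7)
The plan is to work in the complexification $V_\CC$, use the explicit description \eqref{E:im} of $\im\big(p(T)\big)_\CC$ in terms of the Jordan basis $\sB$, and exploit the fact that $T$ being a Lie algebra automorphism turns brackets of eigenvectors into (potential) eigenvectors whose eigenvalue is the product of the originals.

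First I would observe that because $p(x)$ is a real polynomial, $p(T)$ is a real operator, so $\im\big(p(T)\big)\otimes\CC=\im\big(p(T)_\CC\big)$. Thus it suffices to prove that $\im\big(p(T)_\CC\big)$ is an abelian subalgebra of $V_\CC$, and by \eqref{E:im} this subspace has basis $\{\be_{11},\ldots,\be_{r_0,1}\}$. Each $\be_{i1}$ is the first vector of the $i$-th Jordan block of $[T_\CC]_\sB$, hence an honest eigenvector: $T_\CC\be_{i1}=\lambda_i\be_{i1}$. By \eqref{E:reord}, the indices $1\le i\le r_0$ are precisely those for which $|\lambda_i|=\rho$.

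Next, since $T$ is a Lie algebra automorphism of $V$, its complex extension $T_\CC$ is a Lie algebra automorphism of $V_\CC$. Applying this to a bracket of two basis vectors from the list above gives
\[
T_\CC[\be_{i1},\be_{j1}]=[T_\CC\be_{i1},T_\CC\be_{j1}]=\lambda_i\lambda_j[\be_{i1},\be_{j1}],\qquad 1\le i,j\le r_0.
\]
If $[\be_{i1},\be_{j1}]\ne 0$, this would make it an eigenvector of $T_\CC$ with eigenvalue $\lambda_i\lambda_j$, whose modulus is $\rho^2>\rho$ because we assume $\rho>1$. This contradicts the fact that $\rho=\rho(T)$ is the spectral radius. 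Therefore $[\be_{i1},\be_{j1}]=0$ for all $1\le i,j\le r_0$, which shows simultaneously that $\im\big(p(T)_\CC\big)$ is closed under bracket and that the bracket is identically zero on it. Descending from $V_\CC$ back to $V$ yields the claim.

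The argument is short and the only subtle point is bookkeeping: confirming that the complexification of the image of $p(T)$ really is the image of $p(T)_\CC$ (which uses that $p$ has real coefficients), and checking that the hypothesis $\rho>1$ — not just $\rho\ge 1$ — is exactly what makes the strict inequality $\rho^2>\rho$ force the bracket to vanish. No further structure of $V$ is needed.
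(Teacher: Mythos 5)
Your proof is correct and takes essentially the same approach as the paper's: both work in the complexification, observe via \eqref{E:im} that $\im\big(p(T)\big)_\CC$ is spanned by genuine eigenvectors whose eigenvalues all have modulus $\rho$, and then derive a contradiction from $T_\CC[v_1,v_2]=\lambda_1\lambda_2[v_1,v_2]$ with $|\lambda_1\lambda_2|=\rho^2>\rho$. The only cosmetic difference is that you phrase the argument in terms of the explicit Jordan basis vectors $\be_{i1}$, whereas the paper states it for arbitrary eigenvectors of eigenvalue modulus $\rho$; the underlying computation and use of $\rho>1$ are identical.
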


\begin{proof}
It follows from \eqref{E:im} that the restriction of $T_\CC$ onto $\im\big(p(T)\big)_\CC$ is diagonalizable, and all eigenvalues of the restriction have modulus $\rho$. Therefore, it suffices to show that if $\lambda_1,\lambda_2\in\Sp(T)$, $|\lambda_1|=|\lambda_2|=\rho$, and $v_1,v_2\in V_\CC$ are such that $T_\CC v_i=\lambda_iv_i \ (i=1,2)$, then $[v_1,v_2]=0$. Suppose not. In view of
$$T_\CC([v_1,v_2])=[T_\CC v_1,T_\CC v_2]=\lambda_1\lambda_2[v_1,v_2],$$
it follows that $\lambda_1\lambda_2\in\Sp(T)$. But $|\lambda_1\lambda_2|=\rho^2>\rho$, a contradiction.
\end{proof}

\subsection{Expanding and maximally expanding horospherical subgroups}\label{affine}
Let $G$ be a Lie group with Lie algebra $\Lg$, $\Gamma\subset G$  a discrete subgroup, and $X=G/\Gamma$. Recall that a map $f:X\to X$ is affine if it is of the form  \equ{aff} for some   $g\in G$ and $\sigma\in\Aut(G,\Gamma)$. In this case, we also denote $f=f_{g,\sigma}$. Note that $f$ is always surjective, and is injective if and only if $\sigma(\Gamma)=\Gamma$. Let $\sigma_f$ be the automorphism of $G$ given by \equ{auto}, and let $d\sigma_f$ be the tangent map of $\sigma_f$ at $1_G$, which is an automorphism of $\Lg$. Let us observe the following simple facts.

\begin{lemma}\label{L:iterations}
Let $f=f_{g,\sigma}$ be an affine map on $X$.
\begin{itemize}
  \item[(1)] Let $g'\in G$ and $\sigma'\in\Aut(G,\Gamma)$ be such that $f_{g',\sigma'}=f$, and let $G^\circ$ be the identity component of $G$. Then there exists $\gamma\in\Gamma$ such that
\begin{equation}\label{E:affine1}
g'=g\gamma, \qquad \sigma'(h)=\gamma^{-1}\sigma(h)\gamma, \qquad \forall \, h\in G^\circ.
\end{equation}
  \item[(2)] The restriction of $\sigma_f$ to $G^\circ$, and hence $d\sigma_f$, is independent of the choices of $g$ and $\sigma$ that define $f$.
  \item[(3)] For every $n\ge0$, we have
  \begin{equation}\label{E:affine2}
     f^n(hx)=\sigma_f^n(h)f^n(x), \qquad \forall \, h\in G,\ x\in X.
  \end{equation}
\end{itemize}
\end{lemma}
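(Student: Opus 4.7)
\smallskip
\noindent\textbf{Proof plan.} The three parts are logically chained: (1) controls the ambiguity in the pair $(g,\sigma)$ representing $f$, from which (2) falls out by a direct substitution, and (3) is a routine induction on $n$ starting from the definition of $\sigma_f$.

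For part (1), the plan is to unravel the equality $f_{g,\sigma}=f_{g',\sigma'}$ pointwise on cosets. Setting $h=1_G$ in \equ{aff} gives $g\Gamma=g'\Gamma$, so $g'=g\gamma$ for some $\gamma\in\Gamma$, and then for arbitrary $h\in G$ the equality $g\sigma(h)\Gamma=g\gamma\sigma'(h)\Gamma$ translates into
$$\phi(h):=\sigma'(h)^{-1}\gamma^{-1}\sigma(h)\in\Gamma.$$
The key observation is that $\phi:G\to G$ is continuous and takes values in the discrete set $\Gamma$, hence is locally constant. Since $\phi(1_G)=\gamma^{-1}$, it is identically $\gamma^{-1}$ on the identity component $G^\circ$, and rewriting $\phi(h)=\gamma^{-1}$ yields the desired conjugation formula $\sigma'(h)=\gamma^{-1}\sigma(h)\gamma$ for all $h\in G^\circ$.

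For part (2), I would simply substitute the formulas from (1) into the definition \equ{auto}: for $h\in G^\circ$,
$$\sigma_{f_{g',\sigma'}}(h)=g'\sigma'(h)(g')^{-1}=(g\gamma)\bigl(\gamma^{-1}\sigma(h)\gamma\bigr)(g\gamma)^{-1}=g\sigma(h)g^{-1}=\sigma_f(h).$$
So $\sigma_f|_{G^\circ}$ is independent of the choice of $(g,\sigma)$. Since $d\sigma_f$ is the derivative at $1_G$ and only sees the germ of $\sigma_f$ at the identity, which lies entirely in $G^\circ$, it too is well-defined.

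For part (3), induction on $n$ is the cleanest route. The base case $n=0$ is trivial. For $n=1$, write $x=h'\Gamma$ and compute directly:
$$f(hx)=f(hh'\Gamma)=g\sigma(hh')\Gamma=\bigl(g\sigma(h)g^{-1}\bigr)\bigl(g\sigma(h')\Gamma\bigr)=\sigma_f(h)\,f(x).$$
Applying this one-step identity to the already-established equality $f^n(hx)=\sigma_f^n(h)f^n(x)$ (with $\sigma_f^n(h)$ playing the role of the new $h$ and $f^n(x)$ playing the role of the new $x$) gives $f^{n+1}(hx)=\sigma_f(\sigma_f^n(h))\,f^{n+1}(x)=\sigma_f^{n+1}(h)f^{n+1}(x)$, completing the induction.

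The only non-routine step is the discreteness argument in part (1); everything else is a formal manipulation. I expect no genuine obstacle — the subtlety that one needs to restrict to $G^\circ$ (since $\phi$ can take different values of $\Gamma$ on different connected components of $G$) is exactly why the statements of (1) and (2) are hedged with $G^\circ$.
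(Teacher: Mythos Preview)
Your proof is correct and follows essentially the same approach as the paper: the same discreteness-plus-continuity argument for (1), direct substitution for (2), and induction for (3). The only cosmetic differences are that the paper tracks the map $h\mapsto\sigma(h)^{-1}\gamma\sigma'(h)$ rather than your $\phi(h)=\sigma'(h)^{-1}\gamma^{-1}\sigma(h)$, and splits $f^n=f^{n-1}\circ f$ rather than $f^{n+1}=f\circ f^n$ in the induction step.
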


\begin{proof}
(1) The condition implies that $\sigma(h)^{-1}g^{-1}g'\sigma'(h)\in\Gamma$ for all $h\in G$.
By taking $h=1_G$, we see that $g^{-1}g'=\gamma$ for some $\gamma\in\Gamma$. It follows that $\sigma(h)^{-1}\gamma\sigma'(h)\in\Gamma$ for all $h\in G$. Since $\Gamma$ is discrete, we have
$$\sigma(h)^{-1}\gamma\sigma'(h)=\sigma(1_G)^{-1}\gamma\sigma'(1_G)=\gamma, \qquad \forall \, h\in G^\circ.$$
This proves \eqref{E:affine1}.

\smallskip
(2) In view of (1), it suffices to verify that if $g,g',\sigma,\sigma',\gamma$ are such that \eqref{E:affine1} holds, then $g\sigma(h)g^{-1}=g'\sigma'(h)g'^{-1}$ for all $h\in G^\circ$. This is straightforward.

\smallskip
(3) If $n=0$, there is nothing to prove. For $n=1$, if $h\in G$ and $x=h'\Gamma\in X$, then
$$f(hx)=f(hh'\Gamma)=g\sigma(hh')\Gamma=(g\sigma(h)g^{-1})(g\sigma(h')\Gamma)=\sigma_f(h)f(x).$$
This shows that \eqref{E:affine2} holds for $n=1$. Assume $n\ge2$ and that \eqref{E:affine2} holds if $n$ is replaced by $1,\ldots,n-1$. Then for $h\in G$ and $x\in X$, we have
$$f^n(hx)=f^{n-1}\big(f(hx)\big)=f^{n-1}\big(\sigma_f(h)f(x)\big)=\sigma_f^{n-1}\big(\sigma_f(h)\big)f^{n-1}\big(f(x)\big)=\sigma_f^n(h)f^n(x).$$
This completes the proof.
\end{proof}

With the above lemma in mind, one can easily generalize the notion of the expanding   horospherical subgroup $G_f$ to the case when $f$ is an affine map:
the Lie algebra of $G_f$ is the subalgebra of $\Lg$ whose complexification is the
direct sum of generalized eigenspaces of $d\sigma_f$ corresponding to eigenvalues of modulus greater than $1$. Clearly it agrees with \equ{ehs} when $f\in G$.

Furthermore, let us now define the subgroup $G_f^{\max}$ mentioned in the introduction. Applying Lemma \ref{L:polynomial} to $V=\Lg$ and $T=d\sigma_f$, we get a polynomial $p(x)=p_{d\sigma_f}(x)$. If $\rho(d\sigma_f)\le1$, put $\Lg_f^{\max} = \{0\}$. Otherwise denote
\eq{maxexp}{\Lg_f^{\max}:=\im\big(p(d\sigma_f)\big),}
which is an abelian subalgebra of $\Lg$ by Lemma \ref{L:abelian}. After that we can define  $G_f^{\max}$ to be the connected Lie subgroup of $G$ with Lie algebra $\Lg_f^{\max}$.

From the preceding discussion it follows that another way of defining $\Lg_f^{\max}$ is as follows:
we can decompose the complexification  $\Lg_\CC$ of $\Lg$ as a direct sum $\bigoplus_{i=1}^r\Lg_i$ of $d\sigma_f$-invariant subspaces such that the matrix of the restriction of $d\sigma_f$ onto each $\Lg_i$, relative to a certain basis of $\Lg_i$, is a Jordan block with eigenvalue $\lambda_i$, then, reordering the $\Lg_i$'s, we may assume that there is $r_0\in\{1,\ldots,r\}$ such that $|\lambda_1|=\cdots=|\lambda_{r_0}|$, $\dim\Lg_1=\cdots=\dim\Lg_{r_0}$, and if $i>r_0$ then either $|\lambda_i|<|\lambda_1|$ or $|\lambda_i|=|\lambda_1|$ and $\dim\Lg_i<\dim\Lg_1$; finally,  we define $\Lg_f^{\max}$ as the intersection of $\Lg$ with the subspace of $\Lg_\CC$ spanned by the eigenvectors of $d\sigma_f$ contained in $\bigoplus_{i=1}^{r_0}\Lg_i$.
It follows from \equ{maxexp} that $\Lg_f^{\max}$ thus defined does not depend on the decomposition of $\Lg_\CC$. Note that if $d\sigma_f$ is diagonalizable over $\RR$, then $\Lg_f^{\max}$ is the sum of (real) eigenspaces corresponding to eigenvalues of $d\sigma_f$ with maximum modulus.

More generally, if $H$ is a closed subgroup of $G$ with Lie algebra $\Lh$, denote
$$\Lh_f^{\max}:=\begin{cases} \{0\} &\text{ if }\rho(d\sigma_f) \le 1;\\ p(d\sigma_f)(\Lh)&\text{ if }\rho(d\sigma_f) > 1.\end{cases}$$
Since $\Lh_f^{\max}\subset\Lg_f^{\max}$, it is also an abelian subalgebra. Let $H_f^{\max}$ denote the connected Lie subgroup of $G$ with Lie algebra $\Lh_f^{\max}$. Note that $H_f^{\max}$ is not necessarily contained in $H$.
\smallskip

Similarly one can define expanding and maximally expanding horospherical subgroups for one-parameter subsemigroups.
Let $G$ be as above, and let $F=\{g_t:t\in\RR\}$ be a one-parameter subgroup of $G$. If $f(x) := g_tx$; then we clearly have
$\sigma_f(h) = g_thg_{-t}$, hence $d\sigma_f = \Ad g_t$.
Then define  $$G_{F^\pm}:= G_{g_{\pm1}},\qquad  G_{F^\pm}^{\max}:= G_{g_{\pm1}}^{\max}.$$
Also if $H\subset G$ is a closed subgroup with Lie algebra $\Lh$, we will denote
$\Lh_{F^\pm}^{\max}:= \Lh_{g_{\pm1}}^{\max}$ and $H_{F^\pm}^{\max}:=H_{g_{\pm1}}^{\max}$.

\begin{example}\label{example0}
 Let $G = \SL_n(\RR)$, take $p,q\in\NN$ with $n = p + q$, and let
 \begin{equation}\label{E:gt-st}
F=\{g_t:t\in\RR\}, \quad \text{ where } \ g_t=\diag(e^{t/p}I_p,e^{-t/q}I_q),
\end{equation}
a subgroup of $G$ whose action on the quotient of $G$ by $ \SL_n(\ZZ)$ is useful for Diophantine applications, as we shall see in \S\ref{gibfs}.
Then both $\Ad g_1$  and $\Ad g_{-1}$ have  a unique eigenvalue of absolute value $> 1$, hence in this case there is no difference between expanding and   maximally expanding horospherical subgroups. Indeed, one has
 \begin{equation}\label{E:mainexample}\Lg_{F^+}^{\max}=\left\{\begin{pmatrix}
  0 & A \\ 0 & 0
\end{pmatrix}:A\in\mathrm{M}_{p\times q}(\RR)\right\}, \qquad \Lg_{F^-}^{\max}=\left\{\begin{pmatrix}
  0 & 0 \\ B & 0
\end{pmatrix}:B\in\mathrm{M}_{q\times p}(\RR)\right\}.\end{equation}
Note that any $F$ of the form \eqref{E:gt} is conjugate to \eqref{E:gt-st}.
\end{example}

\subsection{Nondense orbits of affine maps}\label{nondenseaffine}

Theorem \ref{A1} is a special case of the following theorem.

\begin{theorem}\label{T:main-discrete}
Let $G$ be a Lie group, $\Gamma\subset G$ a discrete subgroup, $X=G/\Gamma$, $H\subset G$ a closed subgroup, and $f$ an affine map on $X$. Let $Z$ be a $C^1$ submanifold of $X$ satisfying one of the following conditions:
\begin{itemize}
  \item[(i)] either
  \begin{equation}\label{E:discrete-main1}
  \dim\big(T_zZ\cap T_z(G_f^{\max}z)\big)<\dim H_f^{\max}  \qquad \forall \, z\in Z
  \end{equation}
  \item[(ii)] or
  $$\#\big\{\lambda\in\Sp(d\sigma_f): |\lambda|=\rho(d\sigma_f), \ s(\lambda)=s(d\sigma_f)\big\}=1$$
  and
  \begin{equation}\label{E:discrete-main2}
  T_z(H_f^{\max}z)\not\subset T_zZ  \qquad \forall \, z\in Z
  \end{equation}
(that is, $Z$ is $H_f^{\max}$-transversal).
\end{itemize}
Assume also that
\begin{equation}\label{E:discrete-main3}
T_z\big(\sigma_f^n(H)z\big)\not\subset T_zZ \qquad \forall \, z\in Z,\  n\ge0.
\end{equation}
Then, for every $x\in X$, the set
$\{h\in H: hx\in E(f,Z)\}$
is HAW.
\end{theorem}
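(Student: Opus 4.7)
The plan is to play the hyperplane absolute game, or in case (i) the hyperplane percentage variant introduced in \S\ref{S:HPW}, on the group $H$ via an exponential chart at the identity that transfers the game to $\Lh$. Fixing $x \in X$ and Bob's initial ball $B_0$, one constructs Alice's strategy producing $h \in \bigcap_i B_i$ such that $f^n(hx) \notin Z$ for every $n \ge 0$. By Lemma \ref{L:iterations}(3), $f^n(hx) = \sigma_f^n(h) f^n(x)$, so the game reduces to dodging, at each $n$, the set of $h$ for which $\sigma_f^n(h) f^n(x) \in Z$. Using that HAW is preserved under countable intersections and is locally determined, one first reduces to the case where $Z$ is a fixed compact piece of a submanifold sitting in a single chart.

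The key dynamical input, obtained by applying the polynomial analysis of \S\ref{poly} to $T = d\sigma_f$, is that $d\sigma_f^n$ stretches $\Lg_f^{\max}$ at the maximal rate $\asymp n^{s-1}\rho^n$, strictly faster than along any complementary invariant subspace. Pick return times $n_i$ so that at the $i$-th round the image under $d\sigma_f^{n_i}$ of a $\Lg_f^{\max}$-direction in $B_i$ first attains a prescribed scale $\varepsilon_0$, chosen small enough that $Z$ is $\varepsilon_0$-well-approximated near each point by its tangent affine hyperplane (the content of \S\ref{z}). At time $n_i$, the bad set
\[
\{h \in B_i : \sigma_f^{n_i}(h) f^{n_i}(x) \in Z\}
\]
is then, up to controlled errors, the preimage under $d\sigma_f^{n_i}$ of a neighborhood of an affine hyperplane $L \subset \Lg$. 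Condition \eqref{E:discrete-main3} applied with $n = n_i$ and $z = f^{n_i}(x)$ guarantees that $T_z(\sigma_f^{n_i}(H)z) \not\subset L$, so the preimage meets $\Lh$ in a genuine affine hyperplane neighborhood. Under hypothesis (ii), the uniqueness of the dominant eigenvalue of maximal Jordan size makes $d\sigma_f^{n_i}|_{\Lh_f^{\max}}$ a scalar, so this preimage has relative width $O(\beta)$ and Alice plays the standard HAW game. Under hypothesis (i), the dimension bound on $T_z Z \cap T_z(G_f^{\max}z)$ instead ensures that the preimage is covered by a boundedly many hyperplane neighborhoods missing a definite fraction of $B_i$, enabling the hyperplane percentage game.

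The main obstacle is the quantitative control of the nonlinearity in passing from $d\sigma_f^n$ to $\sigma_f^n$ and from $Z$ to its tangent plane. On the group side, one must absorb the Baker--Campbell--Hausdorff corrections when expressing $\sigma_f^n(\exp v)$ as $\exp(d\sigma_f^n(v) + \text{error})$ for $v$ in a ball of $\Lh$ whose linear image has size $\varepsilon_0$; here the abelianness of $\Lg_f^{\max}$ from Lemma \ref{L:abelian} is crucial, as the dominant commutator corrections in the maximally expanding directions vanish and the linearization survives uniformly in $n$. On the submanifold side, the replacement of $Z$ by its tangent plane introduces a quadratic error that must be dominated by the chosen scale $\varepsilon_0$ uniformly over the compact piece of $Z$. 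Combining these estimates with the hyperplane dynamics analysis of \S\ref{hyp} and the local multiplication estimates of \S\ref{lie}, and feeding them through the inductive construction of $B_i$ and $L_i^{(r'_i)}$, completes the argument.
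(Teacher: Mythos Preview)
Your outline has the right shape but contains several concrete gaps. The scheme of assigning one return time $n_i$ per round cannot work: $\|(d\sigma_f)^n\|$ grows by a factor $\approx\rho$ per step while $r_i$ shrinks by factors in $[\beta,1)$, so when $\rho$ is close to $1$ many values of $n$ share the same scale window and must all be avoided within the same block of rounds. This is why the paper uses the hyperplane \emph{percentage} game in both cases (i) and (ii), partitioning the game into stages of $\ell$ rounds, each handling a set $\cN_k$ of up to $2^\ell$ values of $n$ simultaneously; your dichotomy (standard HAW for (ii), percentage only for (i)) misidentifies the source of the multiplicity. Relatedly, your invocation of \eqref{E:discrete-main3} at $z=f^{n_i}(x)$ is ill-posed since that point need not lie in $Z$; in the paper the condition is translated once and for all into the linear hypothesis $(d\sigma_f)^n(\Lh)\not\subset W_z$ for every $z\in Z$, which feeds into Proposition~\ref{P:linear-main} as \eqref{E:linear-main} and is in fact only needed for finitely many small $n$ (see Remark~\ref{R:omega1}).

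There is also a misconception about where the nonlinearity lives. Since $\sigma_f$ is a Lie group automorphism, $\sigma_f^n\circ\exp=\exp\circ(d\sigma_f)^n$ holds \emph{exactly}, so there is no BCH error in that step and the abelianness of $\Lg_f^{\max}$ plays no role in the estimates (Lemma~\ref{L:abelian} is used only to ensure $\Lg_f^{\max}$ is a subalgebra so that $G_f^{\max}$ is well defined). The genuine nonlinear errors come from writing $\exp(\by)\exp(-\by_0)=\exp(\bx)$ inside $H$ and from replacing $Z$ by its tangent space; these are controlled by the general Lie estimates of \S\ref{lie} and the compactness argument of \S\ref{z}, with no appeal to the structure of $\Lg_f^{\max}$. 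The engine that you are missing is the purely linear Proposition~\ref{P:linear-main}: under either (i) or (ii) it produces, for every $n$ and every $W_z$, a \emph{single} hyperplane $L_{W_z,n}\subset\Lh$ together with the uniform distortion bound \eqref{E:linear-main2}. This unifies the two cases at the level of the conclusion and is what makes Lemma~\ref{L:main}, and hence the stage-by-stage percentage strategy, go through.
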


\begin{remark}\label{R:main-discrete}
\begin{itemize}
  \item[(1)] If $\rho(d\sigma_f) \le 1$, the group  $H_f^{\max}$ is trivial, thus neither \eqref{E:discrete-main1} nor \eqref{E:discrete-main2} can be satisfied. Thus without loss of generality one can assume that  $\rho(d\sigma_f) > 1$.
\smallskip

\item[(2)] When $H=G$, both \eqref{E:discrete-main1} and \eqref{E:discrete-main2} are equivalent to the condition that $Z$ is $G_f^{\max}$-transversal, and \eqref{E:discrete-main3} is always satisfied as long as $\dim Z < \dim X$.  Therefore Theorem \ref{T:main-discrete} implies Theorem \ref{A1}.
\smallskip

\item[(3)] If $Z$ is a point, then \eqref{E:discrete-main3} always holds, and both \eqref{E:discrete-main1} and \eqref{E:discrete-main2} are equivalent to $\dim H_f^{\max}>0$, which happens if and only if $\rho(d\sigma_f) > 1$ and $\Lh\not\subset\ker\big(p(d\sigma_f)\big)$.
\smallskip

\item[(4)]  One can also take $H$ to be a one-parameter subgroup. In this case, condition \eqref{E:discrete-main1} is stronger than \eqref{E:discrete-main2}.
\smallskip

\item[(5)] Condition \eqref{E:discrete-main3} is imposed to exclude the case where $Z$ contains an open subset of $f^n(Hx)$ for some $x\in X$ and $n\ge0$. If condition \eqref{E:discrete-main3} is dropped, it can be shown that for every $x\in X$, the set $\{h\in H:\omega(hx)\cap Z=\emptyset\}$, where
    $$\omega(hx):=\{y\in X:\exists \ {n_k\to+\infty \text{ such that } f^{n_k}}hx\to y\}$$
   is the $\omega$-limit set of $hx$,  is HAW on $H$. See Remark \ref{R:omega2}.
\end{itemize}
\end{remark}

\subsection{Nondense orbits of continuous flows}\label{sub:continuous}
\ignore{
Let $G, \Gamma$ and $X$ be as above, and let $F=\{g_t:t\in\RR\}$ be a one-parameter subgroup of $G$. Put $f(x) := g_1x$; then we clearly have $\sigma_f(h) = g_1hg_1^{-1}$, hence $d\sigma_f = \Ad g_1$. Recall that we denoted the group $G_{g_1}^{\max}$ by $G_{F^+}^{\max}$. Also if $H\subset G$ is a closed subgroup with Lie algebra $\Lh$, denote
$\Lh_{F^+}^{\max}:= \Lh_{g_1}^{\max}$ and $H_{F^+}^{\max}:=H_{g_1}^{\max}$.
\smallskip
}
We are now ready to state a continuous analogue of Theorem \ref{T:main-discrete}.

\begin{theorem}\label{T:main-continuous}
Let $G$ be a Lie group, $\Gamma\subset G$ a discrete subgroup, $X=G/\Gamma$, and $H\subset G$ a closed subgroup. Let $F=\{g_t:t\in\RR\}$ be a one-parameter subgroup of $G$. Let $Z$ be an $F$-transversal $C^1$ submanifold of $X$. Assume that
\begin{itemize}
  \item[(i)] either
\begin{equation}\label{E:continuous-main2}
\dim\big((T_zZ\oplus T_z(Fz)\big)\cap T_z\big(G_{F^+}^{\max}z)\big)<\dim H_{F^+}^{\max} \qquad \forall \, z\in Z;
\end{equation}
  \item[(ii)] or $F$ is $\Ad$-diagonalizable over $\RR$ and
\begin{equation}\label{E:continuous-main3}
T_z(H_{F^+}^{\max}z)\not\subset T_zZ\oplus T_z(Fz) \qquad \forall \, z\in Z
\end{equation}
(that is, $Z$ is $(F,H_{F^+}^{\max})$-transversal).
\end{itemize}
Assume also that
\begin{equation}\label{E:continuous-main4}
T_z(g_tHg_t^{-1}z)\not\subset T_zZ\oplus T_z(Fz) \qquad \forall \, z\in Z,\ t\ge0.
\end{equation}
Then, for every $x\in X$ the set
\begin{equation}\label{E:continuous-main5}
\{h\in H: hx\in E(F^+,Z)\}
\end{equation}
is HAW.
\end{theorem}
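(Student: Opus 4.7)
The plan is to derive Theorem \ref{T:main-continuous} from Theorem \ref{T:main-discrete} by applying the latter to the time-one map $f=g_1$ and to the flow-thickened target
$$\tilde Z := \bigcup_{t\in[0,1]} g_{-t}Z,$$
which is the image of the immersion $\phi:[0,1]\times Z\to X$, $(t,z)\mapsto g_{-t}z$ (immersive because $Z$ is $F$-transversal). Since $\phi$ may fail to be globally embedded, one first expresses $\tilde Z$ as a countable union of embedded $C^1$ submanifolds by covering $Z$ by countably many relatively compact open charts and partitioning $[0,1]$ into finitely many short subintervals; the countable intersection property of HAW then allows one to work with a single embedded piece.

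The first reduction is the inclusion $E(f,\tilde Z)\subset E(F^+,Z)$. Suppose $hx\in E(f,\tilde Z)$ while $y\in\overline{F^+ hx}\cap Z$, so $g_{s_k}hx\to y$ with $s_k\ge0$. Writing $s_k=n_k+t_k$ with integers $n_k\ge 0$ and $t_k\in[0,1]$, and passing to a subsequence so that $t_k\to t\in[0,1]$, we obtain
$$g_{n_k}hx \;=\; g_{-t_k}g_{s_k}hx \;\longrightarrow\; g_{-t}y \;\in\; \tilde Z,$$
contradicting $hx\in E(f,\tilde Z)$. Hence $\{h\in H:hx\in E(f,\tilde Z)\}\subset\{h\in H:hx\in E(F^+,Z)\}$, and since HAW sets remain HAW under enlargement (Alice simply plays her strategy for the smaller target), it suffices to prove the former set is HAW.

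To apply Theorem \ref{T:main-discrete} with $f=g_1$, observe that $\sigma_f(\cdot)=g_1\cdot g_1^{-1}$, so $d\sigma_f=\Ad g_1$, and consequently $G_f^{\max}=G_{F^+}^{\max}$ and $H_f^{\max}=H_{F^+}^{\max}$. Because $\Ad g_t$ commutes with $\Ad g_1$ (both lie in the same one-parameter subgroup), it commutes with the polynomial $p(\Ad g_1)$ of Lemma \ref{L:polynomial} and therefore preserves $\Lg_{F^+}^{\max}=p(\Ad g_1)(\Lg)$. Combined with the identification $T_xX\cong\Lg$ via left translation (under which the differential of $g_t$ becomes $\Ad g_t$), every pointwise hypothesis at $z'=g_{-t}z\in\tilde Z$ reduces to the corresponding hypothesis at $z\in Z$, using $T_z(g_t\tilde Z)=T_zZ\oplus T_z(Fz)$ by $F$-transversality. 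Concretely, condition \eqref{E:discrete-main1} for $\tilde Z$ reduces to \eqref{E:continuous-main2}; condition \eqref{E:discrete-main3} for $\tilde Z$ reduces to $T_z(g_{n+t}Hg_{-(n+t)}z)\not\subset T_zZ\oplus T_z(Fz)$, which as $(n,t)\in\ZZ_{\ge0}\times[0,1]$ varies exhausts \eqref{E:continuous-main4}; and condition \eqref{E:discrete-main2} for $\tilde Z$ reduces to \eqref{E:continuous-main3}, its single-eigenvalue hypothesis being automatic in case (ii) since the eigenvalues of $\Ad g_1$ are then positive reals, with maximal modulus attained at a unique $\lambda$ having $s(\lambda)=1$. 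Theorem \ref{T:main-discrete} then yields the desired HAW property.

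The main technical obstacle is the equivariance check: $\Ad g_t$ always preserves $\Lg_{F^+}^{\max}$ (so case (i) is clean), but in general does \emph{not} preserve $\Lh_{F^+}^{\max}$, which is precisely why case (ii) imposes $\Ad$-diagonalizability of $F$ over $\RR$. Under that hypothesis $\Lg_{F^+}^{\max}$ is the top eigenspace of $\operatorname{ad}X$ on which $\Ad g_t$ acts by the scalar $e^{t\mu_0}$, so every subspace of $\Lg_{F^+}^{\max}$---in particular $\Lh_{F^+}^{\max}$---is automatically $\Ad g_t$-invariant, supplying exactly the equivariance the reduction needs. A lesser nuisance is turning $\tilde Z$ into a globally embedded submanifold, which is handled by the countable-intersection robustness of HAW as described at the outset.
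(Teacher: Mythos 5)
Your proof is correct and follows essentially the same strategy as the paper: thicken $Z$ by a bounded flow-box, apply Theorem~\ref{T:main-discrete} to a fixed time-$\tau$ map, verify that the discrete transversality hypotheses descend from the continuous ones via left translation and the $\Ad g_t$-invariance of $\Lg_{F^+}^{\max}$ (resp.\ $\Lh_{F^+}^{\max}$ in case (ii), using $\Ad$-diagonalizability), and then observe $E(g_\tau,\text{flow-box})\subset E(F^+,Z)$. The only cosmetic differences are that you fix $\tau=1$ and repair possible non-embeddedness of $\bigcup_{t\in[0,1]}g_{-t}Z$ via countable intersections, whereas the paper first reduces to compact $Z$ and shrinks $\tau$ so that $Z_{[0,\tau]}$ is globally embedded, and you spell out the pointwise condition reductions that the paper delegates to a cited lemma.
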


\begin{remark}\label{R:main-continuous}
\begin{itemize}
\item[(1)] Similarly to Theorem \ref{T:main-discrete},  neither \eqref{E:continuous-main2} nor \eqref{E:continuous-main3} can hold if $\rho(\Ad g_1) \le 1$. Thus without loss of generality one can assume that  $\rho(\Ad g_1) > 1$.
\smallskip

\item[(2)] As in the case of Theorem \ref{T:main-discrete}, the $H=G$ case of Theorem \ref{T:main-continuous} implies Theorem \ref{A2}. In fact, in this situation, in view of the assumption of $F$-transversality of $Z$, both \eqref{E:continuous-main2} and \eqref{E:continuous-main3} are equivalent to the condition that $Z$ is $(F,G_{F^+}^{\max})$-transversal, and \eqref{E:continuous-main4} is always satisfied.
\smallskip

\item[(3)] Assume $Z$ is a point. Then it is $F$-transversal. Since the intersection of $\Lf$ and $\Lg_{F^+}^{\max}$ is always trivial, both \eqref{E:continuous-main2} and \eqref{E:continuous-main3} are equivalent to $\dim H_{F^+}^{\max}>0$, which happens if and only if $\rho(\Ad g_1) > 1$ and $\Lh\not\subset\ker\big(p(\Ad g_1)\big)$. Note also that \eqref{E:continuous-main4} means $\Lh\not\subset\Lf$, which automatically holds if \eqref{E:continuous-main2} or \eqref{E:continuous-main3} is satisfied.
\smallskip

\item[(4)] Condition \eqref{E:continuous-main4} is imposed to exclude the case where $F^-Z$ contains an open subset of $Hx$ for some $x\in X$.
    If condition \eqref{E:continuous-main4} is dropped, it can be shown that for every $x\in X$, the set $\{h\in H:\omega(hx)\cap Z=\emptyset\}$ is HAW on $H$, where
    $$\omega(hx):=\left\{y\in X:\exists \ t_k\to+\infty, g_{t_k}hx\to y\right\}$$
   is the $\omega$-limit set of $hx$. See Remark \ref{R:omega2}.
\end{itemize}
\end{remark}

\subsection{Proof of Theorem \ref{T:main-continuous} from Theorem \ref{T:main-discrete}}

We now deduce Theorem \ref{T:main-continuous} from Theorem \ref{T:main-discrete}. Assume the conditions of Theorem \ref{T:main-continuous} hold.
Since any $C^1$ submanifold of $X$ is the union of countably many compact $C^1$ submanifolds (possibly with boundaries), we may assume without loss of generality that $Z$ is compact. In this case, it follows from the $F$-transversality of $Z$ that the set
$$Z_{[0,\tau]}:=\bigcup_{t\in[0,\tau]}g_tZ$$ is a $C^1$ submanifold of $X$ for some $\tau>0$, and we have $T_zZ_{[0,\tau]}=T_zZ\oplus T_z(Fz)$ for every $z\in Z$. Moreover, shrinking $\tau$ if necessary, condition \eqref{E:continuous-main2} (resp.\ \eqref{E:continuous-main3}, \eqref{E:continuous-main4}) implies that \eqref{E:discrete-main1} (resp.\ \eqref{E:discrete-main2}, \eqref{E:discrete-main3}) holds with $f(x)=g_\tau x$ and $Z$ replaced by $Z_{[0,\tau]}$ (see \cite[Lemma 4.1.2]{Kl2} or \cite[Lemma 4.1]{KW3} for details). Note also that $p_{\Ad g_\tau}(\Ad g_\tau)=p_{\Ad g_1 } (\Ad g_1 )$. Therefore, Theorem \ref{T:main-discrete} implies that the set
\begin{equation}\label{E:EgZ}
\left\{h\in H:\overline{\{g_{n\tau}hx:n\ge0\}}\cap Z_{[0,\tau]}=\emptyset\right\}
\end{equation}
is HAW on $H$. On the other hand, the set \eqref{E:EgZ} is contained in the set \eqref{E:continuous-main5}. In fact, if $h\in H$ is not in \eqref{E:continuous-main5}, then there exist $t_k\ge0$ such that $g_{t_k}hx\to z\in Z$. Let $n_k\ge0$ be such that $n_k\tau-t_k\in[0,\tau)$. By passing to a subsequence, we may assume that $n_k\tau-t_k\to t\in[0,\tau]$. It follows that
$$g_{n_k\tau}hx=g_{n_k\tau-t_k}(g_{t_k}hx)\to g_tz\in Z_{[0,\tau]}.$$
Thus $h$ is not in \eqref{E:EgZ}. This shows that the set \eqref{E:continuous-main5} contains \eqref{E:EgZ}, and hence is HAW on $H$. \qed

\section{Hyperplanes in a subspace}\label{hyp}

Let $V$ be a Euclidean space with inner product $\la\cdot,\cdot\ra$, and let $\sL(V)$ denote the vector space of linear transformations on $V$. Both the Euclidean norm on $V$ and the operator norm on $\sL(V)$ are denoted by $\|\cdot\|$. For $0\le d\le \dim V$, let $\Gr_d(V)$ denote the Grassmann manifold of $d$-dimensional subspaces of $V$. Our primary goal in this section is to prove the following result concerning hyperplanes in a subspace $U$ of $V$.

\begin{proposition}\label{P:linear-main}
Let $T\in\GL(V)$, $U$ a nonzero subspace of $V$, $0\le d\le \dim V-1$, $\sW$ a closed subset of $\Gr_d(V)$, and $p(x)$ the polynomial given by Lemma \ref{L:polynomial}. Suppose that
\begin{itemize}
  \item[(i)]  either $$\dim\big(W\cap\im p(T)\big)<\dim \big(p(T)U\big) \qquad \forall \, W\in\sW,$$
  \item[(ii)]  or
  $$\#\{\lambda\in\Sp(T): |\lambda|=\rho(T), \ s(\lambda)=s(T)\}=1,$$
  and $p(T)U\not\subset W$ for every $W\in\sW$.
\end{itemize}
Suppose also that
\begin{equation}\label{E:linear-main}
T^n(U)\not\subset W, \qquad \forall \, W\in\sW, n\ge0.
\end{equation}
Then there exists a constant $c=c(T,U,\sW)>0$ satisfying the following property: For any $W\in\sW$ and $n\ge0$, there exists a linear hyperplane $L_{W,n}$ in $U$ such that
\begin{equation}\label{E:linear-main2}
\dist(T^n\bu,W)\ge c\|T^n\|\dist(\bu,L_{W,n}), \qquad \forall \,  \bu\in U.
\end{equation}
\end{proposition}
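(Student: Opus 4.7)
The plan is first to reduce to a pure norm estimate. For each $W \in \sW$ write $\pi_W : V \to W^\perp$ for the orthogonal projection, so that $\dist(\bv, W) = \|\pi_W \bv\|$. I claim it suffices to exhibit $c > 0$ with $\|\pi_W T^n|_U\| \geq c\|T^n\|$ for every $W \in \sW$ and every $n \geq 0$. Indeed, given such a bound, let $\bu_1 \in U$ be a unit right singular vector of $\pi_W T^n|_U$ corresponding to its largest singular value $\sigma_1 = \|\pi_W T^n|_U\|$, and take $L_{W,n}$ to be the orthogonal complement of $\bu_1$ inside $U$ (a hyperplane, taken to be $\{0\}$ if $\dim U=1$). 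The singular value decomposition gives $\dist(T^n\bu, W) = \|\pi_W T^n\bu\| \geq \sigma_1 \dist(\bu, L_{W,n}) \geq c\|T^n\|\dist(\bu, L_{W,n})$, as required.

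To establish this norm estimate, I argue by contradiction: suppose there are $W_k \in \sW$ and $n_k \geq 0$ with $\|\pi_{W_k} T^{n_k}|_U\|/\|T^{n_k}\| \to 0$. The case of bounded $n_k$ is immediately ruled out by \eqref{E:linear-main}: passing to a subsequence along which $n_k = n_0$ is constant and $W_k \to W_\infty \in \sW$ (using compactness of $\Gr_d(V)$ and closedness of $\sW$), continuity of $W \mapsto \pi_W$ yields $T^{n_0}(U) \subset W_\infty$, contradicting \eqref{E:linear-main}. So one may assume $n_k \to \infty$.

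The core analytic input is a structure theorem for accumulation points of $\{T^n/\|T^n\|\}_{n \geq 0}$ in the unit ball of $\sL(V)$. Working in the Jordan basis \eqref{E:basis}, one verifies by a direct computation that $\|T^n\| \asymp n^{s-1}\rho^n$ and that for any convergent subsequence $T^{n_k}/\|T^{n_k}\| \to S$ the limit satisfies
$$\ker S = \ker p(T), \qquad \im S = \im p(T), \qquad S = A \circ p(T),$$
where $A$ is a linear automorphism of $\im p(T)$. Explicitly, $A$ is diagonal in the basis $\{\be_{i,1}\}_{i \leq r_0}$ of $\im p(T)_\CC$ with entries $\alpha_i = \lim_k \mu_i^{n_k-s+1}/((s-1)!\,\rho^{s-1})$, where $\mu_i := \lambda_i/\rho$ has modulus $1$; all $|\alpha_i|$ equal the common nonzero value $((s-1)!\,\rho^{s-1})^{-1}$, so $A$ is invertible. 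Under assumption (ii) the unique top eigenvalue must be real (otherwise its complex conjugate would also be top), hence $\mu_i \in \{\pm 1\}$ and all $\alpha_i$ coincide, so that $A$ collapses to a nonzero real scalar on $\im p(T)$.

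To finish, pass to further subsequences so that both $T^{n_k}/\|T^{n_k}\| \to S$ and $W_k \to W_\infty \in \sW$. The hypothesis $\|\pi_{W_k} T^{n_k}|_U\|/\|T^{n_k}\| \to 0$ passes to the limit as $\pi_{W_\infty} S|_U = 0$, i.e., $S(U) \subset W_\infty \cap \im p(T)$. Under assumption (i), the fact that $A$ is an automorphism of $\im p(T)$ gives $\dim S(U) = \dim p(T)U > \dim(W_\infty \cap \im p(T))$, a contradiction. Under assumption (ii), $S(U) = A(p(T)U) = p(T)U$ up to the nonzero scalar, so $p(T)U \subset W_\infty$, contradicting the hypothesis $p(T)U \not\subset W_\infty$. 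The main technical obstacle is the Jordan-form bookkeeping needed to identify all accumulation points $S$ and, especially, to extract from hypothesis (ii) that $A$ must be a scalar; condition \eqref{E:linear-main} plays the complementary role of excluding the bounded-$n_k$ degeneracy.
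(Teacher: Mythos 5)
Your proof is correct, and it takes a genuinely different (and arguably more conceptual) route than the paper's. The paper proceeds by working with the adjoint $T^*$: it explicitly picks dual unit vectors $\bv_{W,n}\in W^\perp$ (via the compact sets of its Lemma 3.4 and the asymptotic comparison $\la(T^*)^n\ra\approx\la(T^*)^{n-s+1}p(T^*)\ra$ of its Lemma 3.3), sets $L_{W,n}=\bigl((T^*)^n\bv_{W,n}\bigr)^\perp\cap U$, and estimates $\dist(\bu,L_{W,n})=|\la T^n\bu,\bv_{W,n}\ra|/\|P_U(T^*)^n\bv_{W,n}\|$ directly; the constant $c$ comes out explicitly as a product $c_1c_2$ of infima over compact sets. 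You instead reduce via the singular value decomposition to a uniform lower bound $\|\pi_W T^n|_U\|\ge c\|T^n\|$ (which is dual to the same quantity, since $\|\pi_W T^n|_U\|=\sup_{\bv\in\SS(W^\perp)}\|P_U(T^*)^n\bv\|$), and then argue by contradiction using compactness of the unit ball of $\sL(V)$ and of $\sW$: any subsequential limit $S$ of $T^n/\|T^n\|$ has the form $A\circ p(T)$ with $A$ an automorphism of $\im p(T)$ (scalar under hypothesis (ii)), and hypotheses (i), (ii) and \equ{linear-main} then each rule out $\pi_{W_\infty}S|_U=0$. The Jordan-form bookkeeping is the same in both proofs; what you buy with the contradiction argument is brevity and a cleaner separation of roles, at the cost of making $c$ non-explicit. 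One small inaccuracy worth flagging: your formula $|\alpha_i|=\bigl((s-1)!\,\rho^{s-1}\bigr)^{-1}$ cannot be right as written, since it ignores the chosen operator norm on $\sL(V)$; the correct statement is that $|\alpha_i|=\lim_k\binom{n_k}{s-1}\rho^{n_k-s+1}/\|T^{n_k}\|$, which is the same for all $i\le r_0$ and is bounded away from $0$ by Lemma \ref{L:linear1}. This is what you actually use, so the argument is unaffected, but the constant should not be presented as norm-independent.
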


Let us remark that if $W^{(1)}$ denotes the $1$-neighborhood of $W$ in $V$, inequality \eqref{E:linear-main2} means that $T^{-n}(W^{(1)})\cap U$ is contained in the $(c\|T^n\|)^{-1}$-neighborhood of $L_{W,n}$ in $U$.
\smallskip

We first prove some auxiliary lemmas. The first one is probably well known, but we could not find an appropriate reference. We give its simple proof for completeness.

\begin{lemma}\label{L:linear1}
Suppose that $T\in\sL(V)$ is not nilpotent, and let $\rho=\rho(T)$, $s=s(T)$. Then there exists $C>1$ such that
$$C^{-1}n^{s-1}\rho^n\le\|T^n\|\le Cn^{s-1}\rho^n, \qquad \forall \,  n\ge0.$$
\end{lemma}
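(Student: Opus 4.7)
The plan is to reduce everything to the Jordan normal form on the complexification $V_\CC$. Equipping $V_\CC$ with the Hermitian inner product extending $\la\cdot,\cdot\ra$ gives $\|T\|=\|T_\CC\|$, so it suffices to estimate $\|T_\CC^n\|$. I would fix the Jordan basis $\sB$ from \eqref{E:basis}, with associated matrix \eqref{E:Jordan0}; the operator norm relative to an orthonormal basis and the norm of $[T_\CC^n]_\sB$ (as a matrix acting on $\CC^{\dim V}$) differ only by a fixed constant, namely the condition number of the change-of-basis matrix. Therefore it is enough to control the block-diagonal matrix $[T_\CC^n]_\sB$ block by block.

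For each Jordan block $J(\lambda_i,s_i)=\lambda_iI+N_{s_i}$, where $N_{s_i}$ is the standard nilpotent shift with $N_{s_i}^{s_i}=0$, the binomial identity gives
$$J(\lambda_i,s_i)^n=\sum_{j=0}^{\min(n,s_i-1)}\binom{n}{j}\lambda_i^{n-j}N_{s_i}^{\,j}.$$
Since $T$ is not nilpotent we have $\rho>0$, and the entries of $N_{s_i}^{\,j}$ are bounded, so each entry of $J(\lambda_i,s_i)^n$ is dominated by $\binom{n}{s_i-1}|\lambda_i|^n\le C_i n^{s_i-1}|\lambda_i|^n$ for $n\ge 1$. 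Taking the maximum over the finitely many blocks and observing that $n^{s_i-1}|\lambda_i|^n=o(n^{s-1}\rho^n)$ whenever $|\lambda_i|<\rho$ or $s_i<s$ yields the upper bound $\|T^n\|\le C n^{s-1}\rho^n$.

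For the lower bound, the strategy is to exhibit one vector whose image already witnesses the claimed growth rate. Pick an index $i_0$ with $|\lambda_{i_0}|=\rho$ and $s_{i_0}=s$; such an index exists by the definition of $s=s(T)$. Testing $J(\lambda_{i_0},s)^n$ against the top basis vector $\be_{i_0,s}$ of that block produces
$$J(\lambda_{i_0},s)^n\be_{i_0,s}=\sum_{j=0}^{s-1}\binom{n}{j}\lambda_{i_0}^{n-j}\be_{i_0,s-j},$$
whose component along $\be_{i_0,1}$ has modulus $\binom{n}{s-1}\rho^{n-s+1}$, and this is at least $c_1n^{s-1}\rho^n$ for $n\ge s$. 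Hence $\|T_\CC^n\be_{i_0,s}\|\ge c_1n^{s-1}\rho^n$, and transferring back through the fixed change-of-basis matrix gives $\|T^n\|\ge C^{-1}n^{s-1}\rho^n$ for $n$ large; the finitely many small $n$ (with $n\ge 1$) are absorbed by enlarging $C$.

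This is a textbook consequence of Jordan form, so no step is a real obstacle; the only things to be careful about are the clean passage between $V$ and $V_\CC$ and between the Jordan basis and an orthonormal one, and the verification that the binomial coefficient $\binom{n}{s-1}$ really dominates the sum in the lower-bound computation (it does, precisely because the maximal-modulus eigenvalues that also realize the maximal Jordan block size $s$ are what determine both $\rho$ and $s$).
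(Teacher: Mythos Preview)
Your proof is correct and follows essentially the same route as the paper: pass to the Jordan form on $V_\CC$, read off the entries of $J(\lambda_i,s_i)^n$ via the binomial expansion, and use equivalence of norms to transfer back (the paper normalizes to $\rho=1$ and computes the $\|\cdot\|_\infty$ matrix norm exactly as $\|T^n\|_\sB=\binom{n}{s-1}$ for large $n$, whereas you argue the upper and lower bounds separately by bounding entries and by testing against a top Jordan vector). One minor imprecision: the intermediate claim that each entry is bounded by $\binom{n}{s_i-1}|\lambda_i|^n$ is off when $|\lambda_i|<1$, since the actual exponent is $n-j\ge n-s_i+1$; this is harmless because the missing factor $|\lambda_i|^{-(s_i-1)}$ is absorbed into your constant $C_i$.
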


\begin{proof}
By replacing $T$ with $T/\rho$, we may assume that $\rho=1$. Let $\B$ be an ordered basis of $V_\CC$ such that the matrix $[T_\CC]_\sB$ is the Jordan normal form \eqref{E:Jordan0}, and let $\|\cdot\|_\sB$ be the norm on $\sL(V)$ given by $\|S\|_\sB=\|[S_\CC]_\sB\|_\infty$, where $\|\cdot\|_\infty$ denotes the largest modulus of the matrix entries. Then
$$\|T^n\|_\sB=\|\diag(J(\lambda_1,s_1)^n,\ldots,J(\lambda_r,s_r)^n)\|_\infty=\max_{1\le i\le r}\|J(\lambda_i,s_i)^n\|_\infty.$$
It is straightforward to verify that for $1\le j\le k\le s_i$,
\begin{equation}\label{E:Jordan-power}
\text{the $(j,k)$-entry of $J(\lambda_i,s_i)^n$ is equal to $\bin{n}{k-j}\lambda_i^{n-(k-j)}$.}
\end{equation}
Since $|\lambda_i|\le1$, this implies that $\|J(\lambda_i,s_i)^n\|_\infty=\bin{n}{s_i-1}|\lambda_i|^{n-s_i+1}$ whenever $n\ge2s_i$.
Thus, there exists $n_0>0$ such that
$$\|T^n\|_\sB=\max_{1\le i\le r}\bin{n}{s_i-1}|\lambda_i|^{n-(s_i-1)}=\bin{n}{s-1}, \qquad \forall \,  n\ge n_0.$$
Now the lemma follows from the fact that any two norms on $\sL(V)$ are equivalent.
\end{proof}

Let $\SS(V)$ be the unit sphere in $V$, that is,
$$\SS(V)=\{{\bv}\in V:\|{\bv}\|=1\}.$$
Then every $T\in\sL(V)$ induces a map
$$\la T\ra:\SS(V)\sm\ker T \to\SS(V), \qquad {\bv}\mt\frac{T{\bv}}{\|T{\bv}\|}.$$
The next lemma explains the role of the polynomial $p(x)$.

\begin{lemma}\label{L:linear2}
Let $T\in\GL(V)$, let $p(x)$ be the polynomial given by Lemma \ref{L:polynomial}, and let $K$ be a compact subset of $\SS(V)\sm\ker  p(T) $. Then
\begin{equation}\label{E:T}
\inf_{{\bv}\in K,\,n\ge0}\frac{\|T^n{\bv}\|}{\|T^n\|}>0
\end{equation}
and
\begin{equation}\label{E:p}
\lim_{n\to+\infty}\sup_{{\bv}\in K}\|\la T^n\ra {\bv}-\la T^{n-s+1}p(T)\ra {\bv}\|=0.
\end{equation}
\end{lemma}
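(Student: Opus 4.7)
The plan is to extract the dominant part of $T^n$ in a Jordan basis and show that, up to a relative error tending to zero, it equals $\binom{n}{s-1}T^{n-s+1}p(T)$; both conclusions of the lemma then follow from elementary estimates. I would work in an ordered Jordan basis $\sB$ satisfying \eqref{E:basis}--\eqref{E:reord} and invoke the entry formula \eqref{E:Jordan-power}. On a maximally expanding block ($i\le r_0$, so $|\lambda_i|=\rho$ and $s_i=s$), the matrix $J(\lambda_i,s)^n$ has $(1,s)$-entry $\binom{n}{s-1}\lambda_i^{n-s+1}$ and every other nonzero entry of modulus $O(n^{s-2}\rho^n)$. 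Since $p(T)$ acts as $E_{1s}$ on these blocks by \eqref{E:pT}, a direct multiplication gives $J(\lambda_i,s)^{n-s+1}E_{1s}=\lambda_i^{n-s+1}E_{1s}$, so $\binom{n}{s-1}T^{n-s+1}p(T)$ reproduces exactly the dominant $(1,s)$-entry of $T^n$ on each maximal block. On a non-maximal block ($i>r_0$), $p(T)$ vanishes while \eqref{E:reord} forces $\|J(\lambda_i,s_i)^n\|=O(n^{s_i-1}|\lambda_i|^n)=o(n^{s-1}\rho^n)$. Combining these estimates with Lemma \ref{L:linear1} yields the key bound
\[
\bigl\|T^n-\tbinom{n}{s-1}T^{n-s+1}p(T)\bigr\|=o\bigl(\|T^n\|\bigr).
\]

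For \eqref{E:T}, I would note that by \eqref{E:im} the complexification $\im(p(T))_\CC$ is spanned by the eigenvectors $\be_{i1}$ ($i\le r_0$) of $T_\CC$, with eigenvalues $\lambda_i$ of modulus exactly $\rho$. Hence $T$ restricted to $\im p(T)$ is $\CC$-diagonalizable with spectrum on the circle $|z|=\rho$, giving $\|T^m w\|\ge c_0\rho^m\|w\|$ for every $w\in\im p(T)$ and $m\ge 0$, with $c_0=c_0(T)>0$. Compactness of $K$ together with $K\cap\ker p(T)=\varnothing$ yields $\inf_{\bv\in K}\|p(T)\bv\|>0$, whence $\|T^{n-s+1}p(T)\bv\|\gtrsim\rho^n$ uniformly on $K$. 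The key bound combined with Lemma \ref{L:linear1} then gives $\|T^n\bv\|\gtrsim n^{s-1}\rho^n\asymp\|T^n\|$ uniformly on $K$ for all sufficiently large $n$; the finitely many remaining values of $n$ are handled by the invertibility of $T^n$ and continuity of $\bv\mapsto\|T^n\bv\|$ on the compact set $K$.

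For \eqref{E:p}, set $A_n(\bv):=T^n\bv$ and $B_n(\bv):=T^{n-s+1}p(T)\bv$. The key bound together with the lower bound $\|B_n(\bv)\|\gtrsim\rho^n$ uniform on $K$ implies
\[
\bigl\|A_n-\tbinom{n}{s-1}B_n\bigr\|=o\Bigl(\tbinom{n}{s-1}\|B_n\|\Bigr)
\]
uniformly in $\bv\in K$. Using the triangle-inequality consequence $\bigl|\|A_n\|-\binom{n}{s-1}\|B_n\|\bigr|\le\|A_n-\binom{n}{s-1}B_n\|$ together with the positivity of $\binom{n}{s-1}$, a short computation then shows that $\bigl\|A_n/\|A_n\|-B_n/\|B_n\|\bigr\|\to 0$ uniformly on $K$, which is precisely \eqref{E:p}.

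The main obstacle is the Jordan-block bookkeeping behind the key bound, in particular verifying that contributions from blocks with $|\lambda_i|=\rho$ but $s_i<s$ are $o(n^{s-1}\rho^n)$ rather than merely $O(n^{s-1}\rho^n)$---this is where the definition of $s$ as the \emph{maximum} Jordan size among eigenvalues of maximal modulus is essential. Possible oscillation of $\lambda_i^{n-s+1}$ for complex $\lambda_i$ is harmless, since only the magnitude of the error in the key bound matters, not pointwise convergence of $\binom{n}{s-1}^{-1}T^n$. Once the key bound is in place, both \eqref{E:T} and \eqref{E:p} reduce to routine compactness and triangle-inequality arguments.
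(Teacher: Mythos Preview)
Your proof is correct and follows essentially the same approach as the paper: work in a Jordan basis satisfying \eqref{E:basis}--\eqref{E:reord}, compare $T^n$ with $\binom{n}{s-1}T^{n-s+1}p(T)$ via the entry formula \eqref{E:Jordan-power}, and use Lemma \ref{L:linear1}. The paper carries out the same estimates coordinate-by-coordinate (the chain \eqref{E:big} and the explicit $f_{ij}^{(n)}$), whereas you package them once as the operator-norm bound $\|T^n-\binom{n}{s-1}T^{n-s+1}p(T)\|=o(\|T^n\|)$ and then invoke the $\CC$-diagonalizability of $T|_{\im p(T)}$ (from \eqref{E:im}) to get the uniform lower bound on $\|T^{n-s+1}p(T)\bv\|$; this is a slightly cleaner organization of the same argument.
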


\begin{proof}
As in the proof of Lemma \ref{L:linear1}, we may assume that $\rho(T)=1$. Let $\B$ be an ordered basis of $V_\CC$ of the form \eqref{E:basis} such that $[T_\CC]_\sB$ is the matrix \eqref{E:Jordan0} and satisfies \eqref{E:reord}. In this proof, we always write a vector ${\bv}\in V$ as
$${\bv}=\sum_{i=1}^r\sum_{j=1}^{s_i}x_{ij}\be_{ij}.$$
It then follows from \eqref{E:Jordan-power} that
\begin{equation}\label{E:Tnv}
T^n{\bv}=\sum_{i=1}^r\sum_{j=1}^{s_i}\left(\sum_{k=j}^{s_i}\bin{n}{k-j}\lambda_i^{n-(k-j)}x_{ik}\right)\be_{ij}.
\end{equation}
Let $c_0>0$ be such that for any ${\bv}\in V$ we have
\begin{equation}\label{E:c11}
\|{\bv}\|\ge c_0\max_{1\le i\le r_0}|x_{i1}|.
\end{equation}
In view of \eqref{E:ker} and the conditions on $K$, we may also assume that if ${\bv}\in K$ then
\begin{equation}\label{E:c12}
\max_{1\le i\le r_0}|x_{is}|\ge c_0 \qquad \text{and} \qquad \max_{1\le i\le r,1\le j\le s_i}|x_{ij}|\le c_0^{-1}.
\end{equation}
It follows that for $n>n_1:=\lfloor2s^2(1+c_0^{-2})\rfloor$ and ${\bv}\in K$, we have
\begin{eqnarray}
\bin{n}{s-1}^{-1}\|T^n{\bv}\|
& \stackrel{\eqref{E:Tnv},\,\eqref{E:c11}}\ge & c_0\max_{1\le i\le r_0}\left|\sum_{k=1}^s\bin{n}{s-1}^{-1}\bin{n}{k-1}\lambda_i^{n-(k-1)}x_{ik}\right| \notag \\
& \ge & c_0\max_{1\le i\le r_0}\left(|x_{is}|-\sum_{k=1}^{s-1}\bin{n}{s-1}^{-1}\bin{n}{k-1}|x_{ik}|\right) \notag \\
& \stackrel{\eqref{E:c12}}\ge & c_0\left(c_0-(s-1)\bin{n}{s-1}^{-1}\bin{n}{s-2}c_0^{-1}\right) \notag \\
& = & c_0^2-\frac{(s-1)^2}{n-s+2}\ge \frac{c_0^2}{2}. \label{E:big}
\end{eqnarray}
This, together with Lemma \ref{L:linear1}, shows that $\inf_{{\bv}\in K,n>n_1}\frac{\|T^n{\bv}\|}{\|T^n\|}>0$. Clearly, we also have $\inf_{{\bv}\in K,\,0\le n\le n_1}\frac{\|T^n\bv\|}{\|T^n\|}>0$. This proves \eqref{E:T}.
\smallskip

We now prove \eqref{E:p}. For $n>n_1$ and ${\bv}\in K$, we have
\begin{eqnarray*}
& & \ \|\la T^n\ra {\bv}-\la T^{n-s+1}p(T)\ra {\bv}\|\\
& \le & \ \left\|\frac{T^n{\bv}}{\|T^n{\bv}\|}-\frac{\bin{n}{s-1}T^{n-s+1}p(T){\bv}}{\|T^n{\bv}\|}\right\|
+\left\|\frac{\bin{n}{s-1}T^{n-s+1}p(T){\bv}}{\|T^n{\bv}\|}-\frac{T^{n-s+1}p(T){\bv}}{\|T^{n-s+1}p(T){\bv}\|}\right\|\\
& = & \ \frac{\|T^n{\bv}-\bin{n}{s-1}T^{n-s+1}p(T){\bv}\|}{\|T^n{\bv}\|}
+\left|\frac{\|T^n{\bv}\|-\bin{n}{s-1}\|T^{n-s+1}p(T){\bv}\|}{\|T^n{\bv}\|}\right|\\
& \stackrel{\eqref{E:big}}\le & \ 4c_0^{-2}\left\|\bin{n}{s-1}^{-1}T^n{\bv}-T^{n-s+1}p(T){\bv}\right\|.
\end{eqnarray*}
Let us write
$${\bin{n}{s-1}}^{-1}T^n{\bv}-T^{n-s+1}p(T){\bv}=\sum_{i=1}^r\sum_{j=1}^{s_i}f_{ij}^{(n)}({\bv})\be_{ij}.$$
It suffices to prove that
\begin{equation}\label{E:uniform}
\lim_{n\to+\infty}\sup_{{\bv}\in K}|f_{ij}^{(n)}({\bv})|=0
\end{equation}
for any $1\le i\le r$ and $1\le j\le s_i$.
It follows from \eqref{E:pT} and \eqref{E:Tnv} that
$${\bin{n}{s-1}}^{-1}T^n{\bv}-T^{n-s+1}p(T){\bv}
=\sum_{i=1}^r\sum_{j=1}^{s_i}\left(\sum_{k=j}^{s_i}{\bin{n}{s-1}}^{-1}\bin{n}{k-j}\lambda_i^{n-(k-j)}x_{ik}\right)\be_{ij}
-\sum_{i=1}^{r_0}\lambda_i^{n-(s-1)}x_{is}\be_{i1}.$$
Thus, by comparing the coefficients, we deduce that for $1\le i\le r_0$, $j=1$, $n\ge2s$ and ${\bv}\in K$,
\begin{align*}
|f_{i1}^{(n)}({\bv})| & = \left|\left(\sum_{k=1}^s\bin{n}{s-1}^{-1}\bin{n}{k-1}\lambda_i^{n-(k-1)}x_{ik}\right)-\lambda_i^{n-(s-1)}x_{is}\right|\\
& =\left|\sum_{k=1}^{s-1}\bin{n}{s-1}^{-1}\bin{n}{k-1}\lambda_i^{n-(k-1)}x_{ik}\right|\le\frac{(s-1)^2}{c_0(n-s+2)}.
\end{align*}
Hence \eqref{E:uniform} holds for $1\le i\le r_0$ and $j=1$. Similarly, for $1\le i\le r_0$, $2\le j\le s$, $n\ge2s$ and ${\bv}\in K$, we have
$$|f_{ij}^{(n)}({\bv})|=\left|\sum_{k=j}^s\bin{n}{s-1}^{-1}\bin{n}{k-j}\lambda_i^{n-(k-j)}x_{ik}\right|\le\frac{(s-j+1)(s-1)}{c_0(n-s+2)},$$
and hence \eqref{E:uniform} holds for $1\le i\le r_0$ and $2\le j\le s$.
Finally, for $r_0<i\le r$, $1\le j\le s_i$, $n\ge2s_i$ and ${\bv}\in K$, we have
$$ |f_{ij}^{(n)}({\bv})|=\left|\sum_{k=j}^{s_i}\bin{n}{s-1}^{-1}\bin{n}{k-j}\lambda_i^{n-(k-j)}x_{ik}\right|\le s_i\bin{n}{s_i-1}|\lambda_i|^{n-(s_i-1)}c_0^{-1}.$$
Since $|\lambda_i|<1$, we also have \eqref{E:uniform} for $i$ and $j$ in these ranges. This completes the proof.
\end{proof}

We will also need the following result. 

\begin{lemma}\label{L:linear3}
Let $T$, $U$, $\sW$ and $p(x)$ be as in Proposition \ref{P:linear-main}, let $T^*$ be the adjoint transformation of $T$, and suppose that one of the conditions {\rm (i)} or {\rm (ii)} in the Proposition holds. Then there exist compact subsets
$$K\subset\SS(V)\sm\ker p(T^*) \qquad \text{and} \qquad K^{(1)}\subset\SS(V)\sm U^\bot$$
such that
\begin{equation}\label{E:linear3}
\la(T^*)^{n-s+1}p(T^*)\ra(K\cap W^\bot)\cap K^{(1)}\ne\emptyset  \qquad \forall \, W\in\sW, \ n\ge0.
\end{equation}
\end{lemma}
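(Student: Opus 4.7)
The plan is, for each pair $(W, n) \in \sW \times \NN$, to construct a specific unit vector $v_{W,n} \in W^\perp$ whose image $(T^*)^{n-s+1}p(T^*) v_{W,n}$ has substantial projection onto $U$; then $K$ and $K^{(1)}$ will be sublevel sets of $\|p(T^*) v\|$ and $\|P_U w\|$, where $P_U$ denotes orthogonal projection onto $U$. The identity $\la (T^*)^m p(T^*) v, u\ra = \la v, T^m p(T) u\ra$ together with the standard duality $\ker p(T^*) = (\im p(T))^\perp$ shows that $\la(T^*)^{n-s+1}p(T^*)\ra v \notin U^\perp$ is equivalent to $v \not\perp Y_{n-s+1}$, where
\[
Y_m := T^m p(T) U = p(T) T^m U \subset \im p(T), \qquad m \in \ZZ.
\]

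First, I would verify $Y_m \not\subset W$ for every $W \in \sW$ and $m \in \ZZ$. Since $T$ is invertible, $\dim Y_m = \dim p(T)U$. Under hypothesis (i), $Y_m \subset W$ would force $Y_m \subset W \cap \im p(T)$, contradicting the dimension hypothesis. Under (ii) the unique top eigenvalue $\lambda$ is real (closed under complex conjugation), and \eqref{E:im} gives $T|_{\im p(T)} = \lambda\cdot\Id$; so $Y_m = \lambda^m p(T)U = p(T)U$ as subspaces, and the hypothesis $p(T)U \not\subset W$ closes this case. To turn this pointwise non-containment into a uniform angle bound, set $\bar V := \im p(T)$, $\bar T := T|_{\bar V}$, $\bar U := p(T)U$. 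Then $\bar T$ is $\CC$-diagonalizable with all eigenvalues of modulus $\rho := \rho(T)$, so $\{(\bar T/\rho)^m : m \in \ZZ\}$ lies in a compact abelian subgroup $\bar K \subset \GL(\bar V)$; as subspaces one has $Y_m = \bigl((\bar T/\rho)^m\bigr)\bar U$. The continuous function $(W, a) \mapsto \|P_{W^\perp}|_{a\bar U}\|$ is strictly positive on the compact product $\sW \times \bar K$, hence bounded below by some $c_0 > 0$.

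For each $(W,n)$ I would then choose $v_{W,n} \in W^\perp \cap \SS(V)$ to maximize $\|P_U (T^*)^{n-s+1} p(T^*) v\|$. By duality this maximum equals $\sup_{u \in U \cap \SS(V)} \|P_{W^\perp} T^{n-s+1} p(T) u\|$; taking $u \in U$ to be a bounded-norm preimage of a unit vector in $Y_{n-s+1}$ realizing the $c_0$-angle with $W$ yields
\[
\|P_U (T^*)^{n-s+1} p(T^*) v_{W,n}\| \geq c_1 \rho^{n-s+1}
\]
for a uniform $c_1 > 0$. The analogous compactness argument applied to $T^*|_{\im p(T^*)}$ gives $\|(T^*)^{n-s+1} p(T^*)\| \leq C\rho^{n-s+1}$ and $\|(T^*)^{n-s+1}|_{\im p(T^*)}\| \leq C\rho^{n-s+1}$ for some $C > 0$. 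Combining these estimates produces uniform positive constants $c_2, c_3$ with $\|P_U w_{W,n}\| \geq c_2$ and $\|p(T^*) v_{W,n}\| \geq c_3$, where $w_{W,n} := \la (T^*)^{n-s+1} p(T^*)\ra v_{W,n}$. Setting
\[
K := \{v \in \SS(V) : \|p(T^*) v\| \geq c_3\}, \qquad K^{(1)} := \{w \in \SS(V) : \|P_U w\| \geq c_2\}
\]
produces compact subsets of $\SS(V) \setminus \ker p(T^*)$ and $\SS(V) \setminus U^\perp$, respectively, with $v_{W,n} \in K \cap W^\perp$ and $w_{W,n} \in K^{(1)}$, as required.

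The principal obstacle is the uniform positivity of the angle in the compactness step: although $Y_m \not\subset W$ holds for each fixed $m$, one must rule out the angle degenerating to zero as $m \to \infty$. This is precisely what the $\CC$-diagonalizability of $\bar T$ delivers via the compactness of $\bar K$, and is the reason the polynomial $p$ of Lemma~\ref{L:polynomial} is the natural object to work with.
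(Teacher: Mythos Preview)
Your proof is correct and takes a genuinely different route from the paper. The paper treats conditions (i) and (ii) separately: for (i) it chooses complementary subspaces $V^{(1)}\subset\im p(T^*)$ (to $U^\perp\cap\im p(T^*)$) and $V_0\subset W_0^\perp$ (to $\ker p(T^*)\cap W_0^\perp$), then uses the inclusion--exclusion dimension inequality to show $(T^*)^{n-s+1}p(T^*)V_W\cap V^{(1)}\ne\{0\}$ directly, with $K^{(1)}=\SS(V^{(1)})$; for (ii) it simply observes that $T^*|_{\im p(T^*)}$ is $\pm\rho\cdot\id$, so the $n$-dependence disappears. Your argument is unified: you dualize to the condition $Y_m=T^m p(T)U\not\subset W$, note that hypothesis (i) in fact rules out \emph{any} subspace of $\im p(T)$ of dimension $\dim p(T)U$ lying in $W$ (not just the particular $Y_m$), and then extract uniform angles from the compactness of the closure of $\{(\bar T/\rho)^m\}$ in $\GL(\bar V)$. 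This buys you a single argument covering both cases and avoids the local patching over $\sW$ that the paper carries out in case (i); the paper's dimension count, on the other hand, is slightly more elementary in that it never explicitly invokes the compact-group structure of $\bar T/\rho$, relying only on injectivity of $(T^*)^{n-s+1}$ on $\im p(T^*)$. A minor remark: your $K,K^{(1)}$ are superlevel (not sublevel) sets, and the compact-group step does need the observation that hypothesis (i) applies to every element of $\bar K$, not just powers of $\bar T/\rho$---which you have, since any $a\in\bar K$ preserves $\bar V$ and $\dim(a\bar U)=\dim p(T)U$.
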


\begin{proof}
(1) Assume that condition (i) holds. Let $V^{(1)}\subset\im p(T^*) $ be a subspace such that
$$\im p(T^*) =\big(U^\bot\cap\im p(T^*) \big)\oplus V^{(1)},$$
and let $K^{(1)}=\SS(V^{(1)})$. Then $K^{(1)}\subset\SS(V)\sm U^\bot$.
We first prove that for every $W_0\in\sW$, there exist a neighborhood $\sN_{W_0}$ of $W_0$ in $\sW$ and a compact subset $K_{W_0}$ of \linebreak $\SS(V)\sm\ker p(T^*)$ such that
\begin{equation}\label{E:local}
\la(T^*)^{n-s+1}p(T^*)\ra(K_{W_0}\cap W^\bot)\cap K^{(1)}\ne\emptyset  \qquad \forall \, W\in\sN_{W_0}, \ n\ge0.
\end{equation}
To show this, let $V_0\subset W_0^\bot$ be a subspace such that $$W_0^\bot=\big(\ker\, p(T^*)\cap W_0^\bot\big)\oplus V_0,$$
and choose a neighborhood $\sN_{W_0}$ of $W_0$ in $\sW$ and a continuous map
$\overline{\sN_{W_0}}\to\Gr_{\dim V_0}(V)$, $W\mt V_W$ with $V_{W_0}=V_0$ such that $V_W\subset W^\bot$ for $W\in\overline{\sN_{W_0}}$. Shrinking $\sN_{W_0}$ if necessary, we may also assume that $V_W\cap\ker p(T^*) =\{0\}$ for $W\in\overline{\sN_{W_0}}$. Then the set $$K_{W_0}=\bigcup_{W\in\overline{\sN_{W_0}}}\SS(V_W)$$
is compact and contained in $\SS(V)\sm\ker p(T^*)$. For $W\in\sN_{W_0}$ and $n\ge0$, we have
\begin{align*}
\dim\big((T^*)^{n-s+1}p(T^*)V_W\cap V^{(1)}\big) & \ge\dim \, (T^*)^{n-s+1}p(T^*)V_W +\dim V^{(1)}-\dim \im p(T^*) \\
& =\dim   V_0-\big(\dim\im p(T^*)-\dim V^{(1)}\big)\\
& =\dim p(T^*)W_0^\bot -\dim\big(U^\bot\cap\im  p(T^*)\big)\\
& =\dim  \big(W_0\cap\im p(T)\big)^\bot-\dim \big(p(T)U\big)^\bot>0.
\end{align*}
Thus $(T^*)^{n-s+1}p(T^*)(V_W)\cap V^{(1)}\ne\{0\}$. Therefore, there exists ${\bv}\in\SS(V_W)\subset K_{W_0}\cap W^\bot$ such that $\la(T^*)^{n-s+1}p(T^*)\ra {\bv}\in K^{(1)}$. This proves that $\sN_{W_0}$ and $K_{W_0}$ satisfy \eqref{E:local}.

For every $W_0\in\sW$, let us choose $\sN_{W_0}$ and $K_{W_0}$ satisfying \eqref{E:local}. Since $\sW$ is compact, there exist $W_1,\ldots,W_m\in\sW$ such that $\sW=\bigcup_{i=1}^m\sN_{W_i}$. Then $K=\bigcup_{i=1}^mK_{W_i}$ satisfy the requirement of the lemma.
\smallskip

(2) We now assume that condition (ii) holds. We first construct a compact subset $K\subset\SS(V)\sm\ker p(T^*)$ such that $-K=K$, $p(T^*)K\cap U^\bot=\emptyset$ and $K\cap W^\bot\ne\emptyset$ for every $W\in\sW$. Let $W_0\in\sW$. It follows from condition (ii) that $p(T^*)W_0^\bot \not\subset U^\bot$. Let ${\bv}_0\in\SS(W_0^\bot)$ be such that $p(T^*){\bv}_0\notin U^\bot$. Then we can choose a neighborhood $\sN_{W_0}$ of $W_0$ in $\sW$ and a continuous map $\overline{\sN_{W_0}}\to\SS(V)$, $W\mt {\bv}_{W}$ such that ${\bv}_{W_0}={\bv}_0$, ${\bv}_{W}\in W^\bot$ and $p(T^*){\bv}_W\notin U^\bot$ for $W\in\overline{\sN_{W_0}}$. The compact set $K_{W_0}=\{v_W:W\in\overline{\sN_{W_0}}\}$ satisfies $K_{W_0}\cap\ker p(T^*) =\emptyset$, $p(T^*)K_{W_0}\cap U^\bot=\emptyset$ and $K_{W_0}\cap W^\bot\ne\emptyset$ for every $W\in\sN_{W_0}$.
Let $W_1,\ldots,W_m\in\sW$ be such that $\sW=\bigcup_{i=1}^m\sN_{W_i}$. Then the set $K=\bigcup_{i=1}^m\big(K_{W_i}\cup(-K_{W_i})\big)$ satisfies the requirement.

Let $K^{(1)}=\la p(T^*)\ra K\subset\SS(V)\sm U^\bot$. Since $T^*$ and $T$ have the same minimal polynomial, we have $p_{T^*}(x)=p(x)$.
It then follows from condition (ii) and Lemma \ref{L:polynomial} that the restriction of $\la T^*\ra$ to $\SS\big(\im p(T^*)\big)$ is $\pm1$. Therefore, for every $W\in\sW$ and $n\ge0$, we have
$$\la(T^*)^{n-s+1}p(T^*)\ra(K\cap W^\bot)\cap K^{(1)}=\la p(T^*)\ra(K\cap W^\bot)\ne\emptyset.$$
This completes the proof.
\end{proof}

We are now prepared to prove the main result of this section.

\begin{proof}[Proof of Proposition \ref{P:linear-main}]
Let $K$ and $K^{(1)}$ be the compact sets given by Lemma \ref{L:linear3}, and let $K^{(2)}$ be a compact neighborhood of $K^{(1)}$ in $\SS(V)\sm U^\bot$. Since $p_{T^*}(x)=p(x)$, applying Lemma \ref{L:linear2} to $T^*$, we get
$$\lim_{n\to+\infty}\sup_{{\bv}\in K}\left\|\la(T^*)^n\ra {\bv}-\la(T^*)^{n-s+1}p(T^*)\ra {\bv}\right\|=0.$$
Therefore, there exists $N\ge0$ such that for $n\ge N$ and ${\bv}\in K$, we have
$$\la(T^*)^{n-s+1}p(T^*)\ra {\bv}\in K^{(1)} \qquad \Longrightarrow \qquad \la(T^*)^n\ra {\bv}\in K^{(2)}.$$
For $n\ge N$ and $W\in\sW$, it follows from \eqref{E:linear3} that we can choose ${\bv}_{W,n}\in K\cap W^\bot$ such that $\la(T^*)^{n-s+1}p(T^*)\ra {\bv}_{W,n}\in K^{(1)}$, and hence $\la(T^*)^n\ra {\bv}_{W,n}\in K^{(2)}$.
For $0\le n<N$, using condition \eqref{E:linear-main}  and arguing as part (2) of the proof of Lemma \ref{L:linear3} (with $p(T^*)$ replaced by $(T^*)^n$), we see that there exists a compact subset $K_n\subset\SS(V)$ such that $(T^*)^n(K_n)\cap U^\bot=\emptyset$ and $K_n\cap W^\bot\ne\emptyset$ for every $W\in\sW$. In this case, we choose ${\bv}_{W,n}\in K_n\cap W^\bot$. Let
$$K^{(3)}=K^{(2)}\cup\bigcup_{0\le n<N}\la(T^*)^n\ra(K_n),$$
which is again a compact subset of $\SS(V)\sm U^\bot$. In summary, for every $n\ge0$ and $W\in\sW$, we have chosen a unit vector ${\bv}_{W,n}\in W^\bot$ with $\la(T^*)^n\ra {\bv}_{W,n}\in K^{(3)}$. When $n\ge N$, we also have ${\bv}_{W,n}\in K$.
\smallskip

Let $P_U\in\sL(V)$ be the orthogonal projection onto $U$, and let
$$c_1=\inf_{{\bv}\in K^{(3)}}\|P_U\bv\|>0,$$
$$c_2=\min\left\{\inf_{{\bv}\in K,\,n\ge N}\frac{\|(T^*)^n{\bv}\|}{\|T^n\|},\inf_{{\bv}\in\SS(V),\,0\le n<N}\frac{\|(T^*)^n{\bv}\|}{\|T^n\|}\right\}\stackrel{\eqref{E:T}}>0.$$
Then for any $n\ge0$ and $W\in\sW$, we have
$$\|P_U(T^*)^n{\bv}_{W,n}\|\ge c_1\|(T^*)^n{\bv}_{W,n}\|\ge c_1c_2\|T^n\|.$$
Since $(T^*)^n{\bv}_{W,n}\notin U^\bot$, the intersection
$$L_{W,n}=((T^*)^n{\bv}_{W,n})^\bot\cap U$$
is a hyperplane in $U$. For ${\bu}\in U$, we have
$$\dist({\bu},L_{W,n})=\frac{|\la {\bu},P_U(T^*)^n{\bv}_{W,n}\ra|}{\|P_U(T^*)^n{\bv}_{W,n}\|}
=\frac{|\la T^n{\bu},{\bv}_{W,n}\ra|}{\|P_U(T^*)^n{\bv}_{W,n}\|}
\le\frac{\dist(T^n{\bu},W)}{c_1c_2\|T^n\|}.$$
This completes the proof.
\end{proof}

\begin{remark}\label{R:omega1}
In the proof of Proposition \ref{P:linear-main}, condition \eqref{E:linear-main} is only used to define the sets $K_n$ for $0\le n<N$. If condition \eqref{E:linear-main} is dropped, the same argument (for $K^{(3)}=K^{(2)}$ and $n\ge N$) shows the following weaker statement: There exist $N>0$ and $c>0$ such that for $W\in\sW$ and $n\ge N$, there exists a linear hyperplane $L_{W,n}$ in $U$ such that \eqref{E:linear-main2} holds.
\end{remark}

\section{Proof of Theorem \ref{T:main-discrete}}\label{proofs}

\subsection{Hyperplane percentage game}\label{S:HPW}

We will prove the HAW property by demonstrating the winning property for the hyperplane percentage game introduced in \cite{BFS}. Being played on an open subset $U$ of a Euclidean space $V$, the hyperplane percentage game has the same winning sets as the hyperplane absolute game.

Let $S\subset U$ be a target set, and let $\beta\in(0,1)$. The \textsl{$\beta$-hyperplane percentage game} is defined as follows: Bob begins by choosing a closed Euclidean ball $B_0\subset U$. After Bob chooses a closed ball $B_i$ of radius ${r}_i$, Alice chooses finitely many hyperplane neighborhoods $\{L_{i,j}^{({r}_{i,j})} : 1\le j\le N_i\}$ such that $r_{i,j}\le\beta r_i$, and then Bob chooses a closed ball $B_{i+1}\subset B_i$ of radius $r_{i+1}\ge\beta r_i$ such that
$$\#\{1\le j\le N_i:B_{i+1}\cap L_{i,j}^{({r}_{i,j})}=\emptyset\}\ge N_i/2.$$
Alice wins the game if
$$\bigcap_{i=0}^\infty B_i\cap S\ne\emptyset.$$
The set $S$ is \textsl{$\beta$-hyperplane percentage winning ($\beta$-HPW) on $U$} if Alice has a winning
strategy. Note that for large values of $\beta$, it is possible for Alice to leave Bob with no available moves after finitely many turns. However, an elementary argument (see \cite[Lemma 2]{Mo} or \cite[\S2]{BFK}) shows that Bob always has a legal move if $\beta$ is smaller than some constant $\beta_0(\dim V)<1$. For example, we have $\beta_0(1)=1/5$. The set $S$ is \textsl{hyperplane percentage winning (HPW) on $U$} if it is $\beta$-HPW on $U$ for any $\beta\in\big(0,\beta_0(\dim V)\big)$. The significance of this notion lies in the following result.

\begin{lemma}[\cite{BFS}]\label{L:HPW2}
Let $U$ be an open subset of a Euclidean space $V$. A subset $S\subset U$ is HPW on $U$ if and only if it is HAW on $U$.
\end{lemma}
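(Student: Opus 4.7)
The plan is to prove the two implications separately. The direction HAW $\Rightarrow$ HPW is essentially immediate: given a $\beta$-HAW winning strategy for Alice, one plays the identical strategy in the $\beta$-HPW game using $N_i=1$ hyperplane neighborhood at every turn. Bob's HPW constraint $\#\{1\le j\le N_i:B_{i+1}\cap L_{i,j}^{(r_{i,j})}=\emptyset\}\ge N_i/2=1/2$ then forces him to choose $B_{i+1}$ disjoint from the unique hyperplane neighborhood Alice played, which is exactly the HAW rule. So the same strategy wins $\beta$-HPW; since this works for every admissible $\beta$, the set $S$ is HPW on $U$.

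The substantive direction is HPW $\Rightarrow$ HAW, which is the content of the argument in \cite{BFS}. I would prove it by embedding a ``virtual'' $\beta_P$-HPW game inside the actual $\beta_A$-HAW game. Fix $\beta_A\in(0,1/3)$ for which Alice wants a winning strategy, and choose $\beta_P<\beta_0(\dim V)$ sufficiently small relative to $\beta_A$. At the start of each virtual HPW round, Alice feeds Bob's current HAW ball $B$ (of radius $R$) into her winning $\beta_P$-HPW strategy and obtains hyperplane neighborhoods $L_1^{(r_1)},\dots,L_N^{(r_N)}$ with $r_j\le\beta_P R$. She plays them one per HAW turn across $N$ consecutive rounds. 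Provided $\beta_P\le\beta_A^N$, the radii are compatible with both games: since Bob's HAW ball has radius at least $\beta_A^{j-1}R$ when Alice plays $L_j^{(r_j)}$, the bound $r_j\le\beta_P R\le\beta_A^j R=\beta_A\cdot\beta_A^{j-1}R$ makes this a legal HAW move; at the end of the block Bob's ball has radius at least $\beta_A^N R\ge\beta_P R$, so it qualifies as a legal response in the virtual HPW game. Crucially, in HAW Bob must avoid every $L_j^{(r_j)}$, so he avoids at least $N/2$ of them, satisfying the virtual HPW rule. The nested HAW balls sit inside the nested virtual HPW balls, so the unique point of $\bigcap_i B_i$ is determined by the virtual HPW play and lies in $S$ by assumption.

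The main obstacle is controlling the number $N$ of hyperplane neighborhoods per HPW round uniformly in the play, so that $\beta_P\le\beta_A^N$ can be arranged independently of the round. I would handle this by first showing that any winning HPW strategy may be replaced by one that plays at most $N_0=N_0(\dim V,\beta_P)$ hyperplanes per round: the constraint $\beta_P<\beta_0(\dim V)$ was built precisely so that Bob always retains a legal move against any Alice play, and a standard packing argument in $\RR^{\dim V}$ then caps the number of ``useful'' hyperplane neighborhoods Alice can stack in a single ball. Choosing $\beta_P\le\beta_A^{N_0}$ makes the simulation go through; the bookkeeping of radii and identification of the HAW and virtual HPW outcomes are then routine. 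Since $\beta_A\in(0,1/3)$ was arbitrary, $S$ is HAW on $U$.
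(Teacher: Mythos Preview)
The paper does not give its own proof of this lemma; it is simply attributed to \cite{BFS}. Your sketch is therefore not being compared against anything in the present paper, but it is worth assessing on its own merits as a reconstruction of the \cite{BFS} argument.

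The easy direction HAW $\Rightarrow$ HPW is handled correctly: with $N_i=1$ the HPW rule $\#\{\cdots\}\ge 1/2$ forces Bob to avoid the single neighborhood, so the games coincide.

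For HPW $\Rightarrow$ HAW your simulation idea is the right one and matches \cite{BFS} in spirit, but the crux---bounding the number of hyperplanes per HPW round---is precisely the nontrivial content, and your treatment of it has a gap. You assert that ``a standard packing argument'' yields a bound $N_0=N_0(\dim V,\beta_P)$ on the number of hyperplanes Alice need ever play, and then propose to choose $\beta_P\le\beta_A^{N_0}$. But if $N_0$ genuinely depends on $\beta_P$, this is a circular constraint: you must solve $\beta_P\le\beta_A^{N_0(\dim V,\beta_P)}$ for $\beta_P$, and without knowing how $N_0$ grows as $\beta_P\to 0$ there is no reason this has a solution. The packing argument you allude to (the one behind the constant $\beta_0(\dim V)$) shows that Bob always has a legal move, i.e.\ that some sub-ball meets at most a $C(d)\beta_P$-fraction of Alice's neighborhoods; it does \emph{not} by itself say that Alice's winning strategy can be replaced by one using a bounded number of hyperplanes per round. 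That replacement lemma is the heart of the matter, and you have not indicated how to prove it or why the bound can be taken independent of $\beta_P$. Once that is established with $N_0=N_0(\dim V)$ only, your bookkeeping of radii is fine and the simulation goes through.
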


Let us remark that when proving a set $S$ to be HPW, we may assume that $r_i\to0$. In fact, if Alice has a winning strategy whenever $r_i\to0$, then $S$ must be dense, and hence Alice always wins if $r_i\not\to0$. Moreover, by letting Alice make dummy moves in the first several rounds and relabeling $B_i$, we may also assume that $r_0$ is smaller than any prescribed small positive constant.

\subsection{Some Lie-theoretic lemmas}\label{lie}

Let $G$ be a Lie group with Lie algebra $\Lg$. We choose and fix an inner product on $\Lg$. For an inner product space $V$ and $\tau>0$, let $B_V(\tau)$ (resp.\ $B_V^\circ(\tau)$) denote the closed ball (resp.\ open ball) in $V$ of radius $\tau$ centered at $0$. Let $\tau_1>0$ be such that the exponential map of $G$ restricts to a diffeomorphism from $B_\Lg^\circ(\tau_1)$ onto an open neighborhood of $1_G$ in $G$, and let
$$\log:\exp\big(B_\Lg^\circ(\tau_1)\big)\to B_\Lg^\circ(\tau_1)$$
be the inverse of $\exp|_{B_\Lg^\circ(\tau_1)}$. Let $\tau_2\in(0,\tau_1]$ be such that
$$\bx_1,\bx_2,\bx_3\in B_\Lg(\tau_2) \qquad \Longrightarrow \qquad \exp(\bx_1)\exp(\bx_2)\exp(\bx_3)\in\exp\big(B_\Lg^\circ(\tau_1)\big).$$
First, let us prove the following lemma.

\begin{lemma}\label{L:Lie0}
For any $\varepsilon>0$, there exists $\tau_3=\tau_3(\varepsilon)\in(0,\tau_2]$ such that if $\bx,\by,\bz\in B_\Lg(\tau_3)$ satisfy $\exp(\bx)\exp(\by)\exp(\bz)=1_G$, then
$$\|\bx+\by+\bz\|\le\varepsilon\min\{\|\bx\|,\|\by\|,\|\bz\|,\|\bx+\by\|,\|\by+\bz\|,\|\bz+\bx\|\}.$$
\end{lemma}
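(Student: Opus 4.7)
The plan is to apply Baker-Campbell-Hausdorff to the constraint $\exp(\bx)\exp(\by)\exp(\bz)=1_G$ in all three of its cyclic incarnations $\exp(\bx)\exp(\by)=\exp(-\bz)$, $\exp(\by)\exp(\bz)=\exp(-\bx)$, and $\exp(\bz)\exp(\bx)=\exp(-\by)$. Recall that for sufficiently small $\bu,\bv\in\Lg$, BCH gives
$$\log\bigl(\exp(\bu)\exp(\bv)\bigr)=\bu+\bv+\tfrac12[\bu,\bv]+R(\bu,\bv),$$
where $R(\bu,\bv)$ is a convergent series of nested Lie brackets each containing at least one $\bu$ and one $\bv$; hence standard BCH estimates yield $\|R(\bu,\bv)\|\leq C_1\|\bu\|\|\bv\|(\|\bu\|+\|\bv\|)$ for $\|\bu\|+\|\bv\|$ sufficiently small.

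First I would apply $\log$ to each cyclic form of the constraint (using that $\log$ is injective on $\exp(B_\Lg^\circ(\tau_1))$) to obtain three identities for $\bw:=\bx+\by+\bz$:
$$\bw=-\tfrac12[\bu,\bv]-R(\bu,\bv),\qquad (\bu,\bv)\in\{(\bx,\by),(\by,\bz),(\bz,\bx)\}.$$
Combined with the naive $\|[\bu,\bv]\|\leq C\|\bu\|\|\bv\|$, each identity gives $\|\bw\|\leq C\|\bu\|\|\bv\|\leq C\tau_3\min(\|\bu\|,\|\bv\|)$; taking the best of the three then yields $\|\bw\|\leq C\tau_3\min(\|\bx\|,\|\by\|,\|\bz\|)$. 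To control $\|\bw\|$ by $\|\bu+\bv\|$ for each pair, I would use the key identity $[\bu,\bv]=[\bu+\bv,\bv]$ to upgrade the bilinear bound to $\|[\bu,\bv]\|\leq C\|\bu+\bv\|\|\bv\|$, and a Taylor-type argument (below) to improve the remainder bound to $\|R(\bu,\bv)\|\leq C_2\|\bu+\bv\|(\|\bu\|+\|\bv\|)^2$. Consequently $\|\bw\|\leq C'\tau_3\|\bu+\bv\|$ for each cyclic pair, and combining all six estimates produces $\|\bw\|\leq C\tau_3 M$ for a uniform constant $C=C(\Lg)$, where $M$ denotes the minimum in the statement. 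Choosing $\tau_3(\varepsilon):=\min\{\tau_2,\varepsilon/C\}$ then finishes the proof.

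The main subtlety I expect is the improved remainder bound $\|R(\bu,\bv)\|\leq C_2\|\bu+\bv\|(\|\bu\|+\|\bv\|)^2$. The conceptual reason is that $R(\bu,-\bu)=0$, since $\exp(\bu)\exp(-\bu)=1_G$ forces the full BCH series to vanish on the hyperplane $\{\bv=-\bu\}$. Analytically the estimate then follows by Taylor's theorem applied along the segment from $(\bu,-\bu)$ to $(\bu,\bv)$, using that the directional derivative $D_\bv R$ is analytic near the origin and vanishes to second order there. This is the only nontrivial analytical input; the rest of the argument is essentially bookkeeping.
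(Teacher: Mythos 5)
Your proposal is correct, but it takes a noticeably more technical route than the paper's, and in particular it contains unnecessary machinery. Where you invoke the explicit Baker--Campbell--Hausdorff series and work to establish the refined remainder estimate $\|R(\bu,\bv)\|\le C_2\|\bu+\bv\|(\|\bu\|+\|\bv\|)^2$ in order to control $\|\bw\|$ by $\|\bu+\bv\|$, the paper avoids this entirely. It defines $\Phi(\bx,\by)=\log\bigl(\exp(\bx)\exp(\by)\bigr)-\bx-\by$, notes $\Phi(\bx,0)=\Phi(0,\by)=0$ and $\tfrac{\partial\Phi}{\partial\bx}(0,0)=0$, and writes $\Phi(\bx,\by)=\bigl(\int_0^1\tfrac{\partial\Phi}{\partial\bx}(t\bx,\by)\,dt\bigr)\bx$ to get $\|\bw\|=\|\Phi(\bx,\by)\|\le\tfrac{\varepsilon}{1+\varepsilon}\|\bx\|$ once $\tau_3$ is small. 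The bound in terms of $\|\by+\bz\|$ is then obtained purely algebraically, with no further analysis of the remainder: since $\|\bx\|\le\|\bw\|+\|\by+\bz\|$, one has $\|\bw\|=(1+\varepsilon)\|\bw\|-\varepsilon\|\bw\|\le\varepsilon\|\bx\|-\varepsilon\|\bw\|\le\varepsilon\|\by+\bz\|$; cyclic symmetry then covers all six quantities. So your ``main subtlety'' --- the Taylor argument along the segment from $(\bu,-\bu)$ to $(\bu,\bv)$ showing $R$ picks up a factor of $\|\bu+\bv\|$ --- is real work that the paper sidesteps with a one-line triangle-inequality trick. Your version also relies on convergence of the BCH series, which the paper avoids by using only the $C^1$ (integral-form Taylor) structure of $\log(\exp(\cdot)\exp(\cdot))$. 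Both arguments are sound; yours is heavier than it needs to be.
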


\begin{proof}
By symmetry, it suffices to prove that
\begin{equation}\label{E:Lie0}
\|\bx+\by+\bz\|\le\varepsilon\min\{\|\bx\|,\|\by+\bz\|\}.
\end{equation}
Consider the map $\Phi:B_\Lg(\tau_2)\times B_\Lg(\tau_2)\to\Lg$ given by
\begin{equation}\label{E:Lie03}
\Phi(\bx,\by)=\log\big(\exp(\bx)\exp(\by)\big)-\bx-\by.
\end{equation}
Note that
\begin{equation}\label{E:Lie04}
\Phi(\bx,0)=\Phi(0,\by)=0, \qquad \forall \, \bx\in B_\Lg(\tau_2).
\end{equation}
Thus, if we let $\frac{\partial\Phi}{\partial\bx}:B_\Lg(\tau_2)\times B_\Lg(\tau_2)\to\sL(\Lg)$ be the partial derivative 
of $\Phi$ with respect to $\bx$, then
$$\Phi(\bx,\by)=\left(\int_0^1\frac{\partial\Phi}{\partial\bx}(t\bx,\by)dt\right)\bx.$$
Note that $\frac{\partial\Phi}{\partial\bx}$ is continuous, and it follows from \eqref{E:Lie04} that $\frac{\partial\Phi}{\partial\bx}(0,0)=0$. Thus, for any $\varepsilon>0$, there exists $\tau_3\in(0,\tau_2]$ such that
$$\bx,\by\in B_\Lg(\tau_3) \quad \Longrightarrow \quad \left\|\frac{\partial\Phi}{\partial\bx}(\bx,\by)\right\|\le\frac{\varepsilon}{1+\varepsilon}.$$
Therefore,
$$\|\Phi(\bx,\by)\|\le\left(\int_0^1\left\|\frac{\partial\Phi}{\partial\bx}(t\bx,\by)\right\|dt\right)\|\bx\|\le\frac{\varepsilon}{1+\varepsilon}\|\bx\|, \qquad \forall \, \bx,\by\in B_\Lg(\tau_3).$$
Suppose that $\bx,\by,\bz\in B_\Lg(\tau_3)$ and $\exp(\bx)\exp(\by)\exp(\bz)=1_G$. Then $\Phi(\bx,\by)=-(\bx+\by+\bz)$. It follows that
\begin{equation}\label{E:Lie01}
\|\bx+\by+\bz\|=\|\Phi(\bx,\by)\|\le\frac{\varepsilon}{1+\varepsilon}\|\bx\|.
\end{equation}
This in turn implies that
\begin{align}
\|\bx+\by+\bz\|&=(1+\varepsilon)\|\bx+\by+\bz\|-\varepsilon\|\bx+\by+\bz\| \notag \\
&\le\varepsilon\|\bx\|-\varepsilon\|\bx+\by+\bz\| \notag \\
&\le\varepsilon\|\by+\bz\|. \label{E:Lie02}
\end{align}
Now \eqref{E:Lie0} follows from \eqref{E:Lie01} and \eqref{E:Lie02}.
\end{proof}

For the convenience of later reference, let us record the following corollary.

\begin{corollary}\label{C:Lie1}
For any $\varepsilon>0$, there exists $\tau_4=\tau_4(\varepsilon)\in(0,\tau_2]$ such that
\begin{itemize}
  \item[(1)] For any $\bx,\by\in B_\Lg(\tau_4)$, we have
$$\|\log\big(\exp(\bx)\exp(\by)\big)\|\le(1+\varepsilon)\|\bx+\by\|.$$
  \item[(2)] For any $\bx,\by,\bz\in B_\Lg(\tau_4)$, we have
$$\|\log\big(\exp(\bx)\exp(\by)\exp(\bz)\big)-\by\|\le(1+\varepsilon)(\|\bx\|+\|\bz\|).$$
\end{itemize}
\end{corollary}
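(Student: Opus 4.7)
The plan is to reduce both statements directly to Lemma \ref{L:Lie0}, using the freedom it grants us to choose any of the six norms on the right-hand side of its conclusion.

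For part (1), I would set $\bw=\log\bigl(\exp(\bx)\exp(\by)\bigr)$, which is well defined for $\bx,\by\in B_\Lg(\tau_2)$, and observe that $\exp(\bx)\exp(\by)\exp(-\bw)=1_G$. By continuity of multiplication and of $\log$ at $(0,0)$, one can shrink the domain so that $\bw\in B_\Lg(\tau_3(\varepsilon))$ whenever $\bx,\by$ live in a suitable small ball. Applying Lemma \ref{L:Lie0} to the triple $(\bx,\by,-\bw)$ and selecting the term $\|\bx+\by\|$ on the right, we obtain
\[
\|\bx+\by-\bw\|\;\le\;\varepsilon\,\|\bx+\by\|,
\]
and a single triangle inequality gives $\|\bw\|\le(1+\varepsilon)\|\bx+\by\|$.

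For part (2), I would apply Lemma \ref{L:Lie0} twice. Set $\bu=\log\bigl(\exp(\by)\exp(\bz)\bigr)$ and $\bw=\log\bigl(\exp(\bx)\exp(\by)\exp(\bz)\bigr)=\log\bigl(\exp(\bx)\exp(\bu)\bigr)$, so that $\exp(\by)\exp(\bz)\exp(-\bu)=1_G$ and $\exp(\bx)\exp(\bu)\exp(-\bw)=1_G$. Applying the lemma to the first triple with the bound $\|\bz\|$ yields $\|\by+\bz-\bu\|\le\varepsilon\|\bz\|$, and hence $\|\bu-\by\|\le(1+\varepsilon)\|\bz\|$. Applying it to the second triple with the bound $\|\bx\|$ yields $\|\bx+\bu-\bw\|\le\varepsilon\|\bx\|$, hence $\|\bw-\bu\|\le(1+\varepsilon)\|\bx\|$. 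Combining via the triangle inequality,
\[
\|\bw-\by\|\;\le\;\|\bw-\bu\|+\|\bu-\by\|\;\le\;(1+\varepsilon)\bigl(\|\bx\|+\|\bz\|\bigr).
\]

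The only technical point is to choose a single $\tau_4\in(0,\tau_2]$ so that the intermediate logarithms $\bu$ and $\bw$ lie in $B_\Lg(\tau_3(\varepsilon))$ whenever $\bx,\by,\bz\in B_\Lg(\tau_4)$, which is a routine consequence of the continuity of $(\bx,\by)\mapsto\log(\exp\bx\exp\by)$ and of $(\bx,\by,\bz)\mapsto\log(\exp\bx\exp\by\exp\bz)$ together with the fact that these maps vanish at the origin. Shrinking $\tau_4$ once more if necessary so that the norms produced above are within $\tau_2$ of where they need to be is straightforward. There is no genuine obstacle here — the corollary is simply a convenient repackaging of Lemma \ref{L:Lie0}, where the real work of controlling the Baker--Campbell--Hausdorff corrections via the auxiliary function $\Phi$ has already been carried out.
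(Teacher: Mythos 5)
Your proposal is correct and follows essentially the same route as the paper: both prove the corollary by two applications of Lemma \ref{L:Lie0} together with a triangle inequality, after choosing $\tau_4$ small enough that the intermediate logarithms lie in $B_\Lg(\tau_3(\varepsilon))$. The only cosmetic difference in part (2) is that you introduce the auxiliary element $\bu=\log(\exp\by\exp\bz)$ (grouping the last two factors) and combine via a two-step triangle inequality, whereas the paper introduces $\bv=\log(\exp\bx\exp\by)$ (grouping the first two) and combines the two estimates through the single algebraic identity $\bw-\by=(\bv-\by-\bx)+(\bw-\bz-\bv)+(\bx+\bz)$; both yield exactly the same bound.
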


\begin{proof}
For $\varepsilon>0$, let $\tau_3=\tau_3(\varepsilon)\in(0,\tau_2]$ be as in Lemma \ref{L:Lie0}, and let $\tau_4\in(0,\tau_2]$ be such that
$$\bx_1,\bx_2,\bx_3\in B_\Lg(\tau_4) \quad \Longrightarrow \quad \log\big(\exp(\bx_1)\exp(\bx_2)\exp(\bx_3)\big)\in B_\Lg(\tau_3).$$
Then (1) follows by applying Lemma \ref{L:Lie0} to $\bz=-\log\big(\exp(\bx)\exp(\by)\big)$.
For (2), let
$$\bw=\log\big(\exp(\bx)\exp(\by)\exp(\bz)\big), \qquad \bv=\log\big(\exp(\bx)\exp(\by)\big).$$
Then $\bw,\bv\in B_\Lg(\tau_3)$. Note that
$$\exp(\bv)\exp(-\by)\exp(-\bx)=\exp(\bw)\exp(-\bz)\exp(-\bv)=1_G.$$
It follows from Lemma \ref{L:Lie0} that
\begin{align*}
\|\bw-\by\|&=\|(\bv-\by-\bx)+(\bw-\bz-\bv)+(\bx+\bz)\|\\
&\le\|\bv-\by-\bx\|+\|\bw-\bz-\bv\|+\|\bx+\bz\|\\
&\le\varepsilon\|\bx\|+\varepsilon\|\bz\|+\|\bx+\bz\|\\
&\le(1+\varepsilon)(\|\bx\|+\|\bz\|).
\end{align*}
This proves (2).
\end{proof}

We will only use the $\varepsilon=1$ case of Corollary \ref{C:Lie1}. However, the following result will be needed for arbitrarily small $\varepsilon$.

\begin{lemma}\label{L:Lie2}
Let $\Lh$ be a subalgebra of $\Lg$. Then there exist $\tau_5\in(0,\tau_2]$ and a function $\delta_1:(0,1)\to(0,\tau_2]$ such that for any $\by\in B_\Lh(\tau_5)$, there exists $T_\by\in\GL(\Lh)$ with $\|T_\by\|\le2$ such that
$$\varepsilon\in(0,1), \bx\in B_\Lh\big(\delta_1(\varepsilon)\big) \quad \Longrightarrow \quad \|\log\big(\exp(\bx)\exp(\by)\big)-\by-T_\by\bx\|\le\varepsilon\|\bx\|.$$
\end{lemma}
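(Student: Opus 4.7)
I will take $T_\by$ to be the differential at $\bx=0$ of the smooth map
$\varphi_\by(\bx):=\log\bigl(\exp(\bx)\exp(\by)\bigr)$, defined on a sufficiently small ball in $\Lh$. A first point to verify is that $\varphi_\by$ actually takes values in $\Lh$ when $\bx,\by\in\Lh$: this follows either from the Baker--Campbell--Hausdorff formula, whose every term is an iterated Lie bracket in $\bx,\by$, or from the fact that the analytic subgroup of $G$ with Lie algebra $\Lh$ is locally parametrized near $1_G$ by $\exp|_\Lh$, combined with the injectivity of $\exp$ on $B_\Lg^\circ(\tau_1)$. Differentiating the identity $\exp\bigl(\varphi_\by(t\bx)\bigr)=\exp(t\bx)\exp(\by)$ at $t=0$, and applying the classical formula $d\exp_\by=dR_{\exp\by}\circ\tfrac{e^{\operatorname{ad}\by}-1}{\operatorname{ad}\by}$, yields the explicit expression
$$T_\by \;=\; \left.\frac{\operatorname{ad}\by}{e^{\operatorname{ad}\by}-1}\right|_\Lh \;=\; \id_\Lh - \tfrac12\operatorname{ad}\by + \tfrac{1}{12}(\operatorname{ad}\by)^2-\cdots,$$
which visibly preserves $\Lh$ because $\operatorname{ad}\by$ does. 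Since $T_0=\id_\Lh$ and $\by\mapsto T_\by$ is continuous (real-analytic, in fact), I will choose $\tau_5\in(0,\tau_2]$ small enough that $T_\by\in\GL(\Lh)$ and $\|T_\by\|\le 2$ for every $\by\in B_\Lh(\tau_5)$.

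To upgrade the pointwise first-order Taylor expansion $\varphi_\by(\bx)-\by-T_\by\bx=o(\|\bx\|)$ to a bound that is uniform in $\by$, I will introduce the smooth function
$$G(\bx,\by):=\log\bigl(\exp(\bx)\exp(\by)\bigr)-\by-T_\by\bx$$
on a neighborhood of $(0,0)$ in $\Lh\times\Lh$. By construction $G(0,\by)\equiv 0$ and $\partial_\bx G(0,\by)\equiv 0$. Because $G$ is smooth and $\by\mapsto T_\by$ is smooth, the second $\bx$-derivative of $G$ is continuous in $(\bx,\by)$, hence bounded by some constant $M$ on the compact set $B_\Lh(\tau_2)\times B_\Lh(\tau_5)$. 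Taylor's theorem with remainder then gives $\|G(\bx,\by)\|\le\tfrac{M}{2}\|\bx\|^2$ throughout this set, and setting $\delta_1(\varepsilon):=\min\{\tau_2,\,2\varepsilon/M\}$ converts this into the required inequality.

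The only step requiring genuine thought is the explicit identification of $T_\by$ together with the verification that it restricts to an element of $\GL(\Lh)$; everything else reduces to a routine Taylor estimate on a compact domain, with uniformity in $\by$ guaranteed by that compactness.
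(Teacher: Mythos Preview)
Your proof is correct and follows essentially the same approach as the paper: both define $T_\by$ as the $\bx$-derivative of $\log(\exp(\bx)\exp(\by))$ at $\bx=0$ and then obtain the uniform-in-$\by$ bound by a Taylor-type estimate on a compact set. The only minor differences are that the paper controls the remainder via uniform continuity of the first $\bx$-derivative (so it needs only $C^1$, not $C^2$) rather than boundedness of the second derivative, and it leaves $T_\by=\id_\Lh+\partial_\bx\Phi_\Lh(0,\by)$ abstract rather than computing it explicitly via the $d\exp$ formula.
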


\begin{proof}
The map $\Phi$ defined in \eqref{E:Lie03} sends $B_\Lh(\tau_2)\times B_\Lh(\tau_2)$ into $\Lh$. Let $\Phi_\Lh$ be the restriction of $\Phi$ to $\Lh\times \Lh$. For $\by\in B_\Lh(\tau_2)$, let
$$T_\by=\id_\Lh+\frac{\partial\Phi_\Lh}{\partial\bx}(0,\by).$$
Then for $\bx\in B_\Lh(\tau_2)$, we have
\begin{align*}
\log\big(\exp(\bx)\exp(\by)\big)-\by-T_\by\bx&=\Phi_\Lh(\bx,\by)-\frac{\partial\Phi_\Lh}{\partial\bx}(0,\by)\bx\\
&=\left(\int_0^1\left(\frac{\partial\Phi_\Lh}{\partial\bx}(t\bx,\by)-\frac{\partial\Phi_\Lh}{\partial\bx}(0,\by)\right)dt\right)\bx.
\end{align*}
Since the map $\frac{\partial\Phi_\Lh}{\partial\bx}:B_\Lh(\tau_2)\times B_\Lh(\tau_2)\to\sL(\Lh)$ is continuous, it is uniformly continuous. Hence, there exists a function $\delta_1:(0,1)\to(0,\tau_2]$ such that for any $\varepsilon\in(0,1)$ and $\bx,\bx',\by,\by'\in  B_\Lh(\tau_2)$, we have
$$\max\{\|\bx-\bx'\|,\|\by-\by'\|\}\le\delta_1(\varepsilon) \quad \Longrightarrow \quad \left\|\frac{\partial\Phi_\Lh}{\partial\bx}(\bx,\by)-\frac{\partial\Phi_\Lh}{\partial\bx}(\bx',\by')\right\|\le\varepsilon.$$
Let $\tau_5=\delta_1(1/2)$. In view of $\frac{\partial\Phi_\Lh}{\partial\bx}(0,0)=0$, it follows that for any $\by\in B_\Lh(\tau_5)$, we have $\|\frac{\partial\Phi_\Lh}{\partial\bx}(0,\by)\|\le1/2$, and hence $T_\by$ is invertible and $\|T_\by\|\le2$. Moreover, it follows that if $\bx\in B_\Lh\big(\delta_1(\varepsilon)\big)$, then
$$\left\|\log\big(\exp(\bx)\exp(\by)\big)-\by-T_\by\bx\right\|\le\left(\int_0^1\left\|\frac{\partial\Phi_\Lh}{\partial\bx}(t\bx,\by)
-\frac{\partial\Phi_\Lh}{\partial\bx}(0,\by)\right\|dt\right)\|\bx\|
\le\varepsilon\|\bx\|.$$
This completes the proof.
\end{proof}

\subsection{A nice neighborhood of $Z$}\label{z}

Let now $\Gamma$ be a discrete subgroup of $G$, and let $X=G/\Gamma$. For $x\in X$, we define the map
$$\exp_x:\Lg\to X, \qquad \exp_x(\bx)=\exp(\bx)x.$$
Let $d\exp_x:\Lg\to T_xX$ be the tangent map of $\exp_x$ at $0$.
The next lemma shows the existence of a good neighborhood of the submanifold $Z$ of $X$ whenever $Z$ is compact.

\begin{lemma}\label{L:nbhd}
Let $Z\subset X$ be a compact $C^1$ submanifold (possibly with boundary). For $z\in Z$, consider the subspace of $\Lg$ given by $W_z=(d\exp_z)^{-1}(T_zZ)$. Then there exists a function $\delta_2:(0,1)\to(0,\infty)$ such that for any $\varepsilon\in(0,1)$ and $r\in(0,\delta_2(\varepsilon)]$, there exists a neighborhood $\Omega$ of $Z$ satisfying the following property: For any $y\in\Omega$, there exists $z\in Z$ such that
$$\bx\in B_\Lg(r),\  \exp_y(\bx)\in\Omega \quad \Longrightarrow \quad \dist(\bx,W_z)<\varepsilon r.$$
\end{lemma}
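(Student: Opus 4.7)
The plan is to take $\Omega$ to be a tubular neighborhood of $Z$ of width $\eta\ll\varepsilon r$, and then, using a uniform local chart for $Z$ together with the Baker--Campbell--Hausdorff estimate of Corollary \ref{C:Lie1}(2), to show that any displacement $\bx$ sending one point of $\Omega$ to another is forced to lie close to the tangent direction $W_z$ at a suitable $z\in Z$.

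First I would use compactness of $Z$ and the $C^1$ hypothesis to produce uniform local charts: for each $z\in Z$, setting $N_z:=W_z^\bot\subset\Lg$, the implicit function theorem yields a $C^1$ map $\psi_z:W_z\cap B_\Lg^\circ(r_0)\to N_z$ with $\psi_z(0)=0$, $d\psi_z(0)=0$, such that $\bw\mapsto\exp(\bw+\psi_z(\bw))z$ parametrizes a neighborhood of $z$ in $Z$. By compactness these charts admit a uniform $C^1$ modulus: there exist $r_0>0$ and a nondecreasing function $\omega:[0,r_0]\to[0,\infty)$ with $\omega(t)\to0$ as $t\to0^+$ such that
\[
\|\psi_z(\bw)\|\le\omega(\|\bw\|)\|\bw\|,\qquad\forall\,z\in Z,\ \bw\in W_z,\ \|\bw\|\le r_0,
\]
and such that any two sufficiently close points $z,z'\in Z$ are related by $z'=\exp(\bw+\psi_z(\bw))z$ for a unique $\bw\in W_z$. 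Given $\varepsilon\in(0,1)$ and $r\in(0,\delta_2(\varepsilon)]$ (with $\delta_2$ to be chosen), I would then set
\[
\Omega:=\{\exp(\bv)z:z\in Z,\ \bv\in N_z,\ \|\bv\|<\eta\}
\]
for $\eta>0$ sufficiently small that every $y\in\Omega$ admits, by the tubular neighborhood theorem, a unique representation $y=\exp(\bv_y)z(y)$ with $z(y)\in Z$ and $\bv_y\in N_{z(y)}$.

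Now let $y\in\Omega$ and $\bx\in B_\Lg(r)$ with $\exp_y(\bx)\in\Omega$; set $z=z(y)$, $z'=z(\exp_y(\bx))$, and write $y=\exp(\bv_y)z$, $\exp(\bx)y=\exp(\bv')z'$. For $r+\eta$ small, $z$ and $z'$ lie in a common chart, so $z'=\exp(\bw+\bn)z$ with $\bw\in W_z$ and $\bn=\psi_z(\bw)\in N_z$, yielding $\exp(\bx)=\exp(\bv')\exp(\bw+\bn)\exp(-\bv_y)$ near $1_G$. Corollary \ref{C:Lie1}(2) (applied with $\varepsilon=1$) then gives $\|\bx-(\bw+\bn)\|\le 2(\|\bv'\|+\|\bv_y\|)\le 4\eta$; since $\bw\in W_z$ and $\bn\in N_z$ are orthogonal, Pythagoras forces $\|\bw\|\le\|\bw+\bn\|\le\|\bx\|+4\eta\le r+4\eta$, and
\[
\dist(\bx,W_z)\le\|\bn\|+\|\bx-(\bw+\bn)\|\le\omega(\|\bw\|)\|\bw\|+4\eta.
\]
Imposing $\eta\le r/4$ gives $\|\bw\|\le 2r$ and hence $\omega(\|\bw\|)\|\bw\|\le 2r\omega(2r)$. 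Defining $\delta_2(\varepsilon)$ small enough that $\omega(2r)<\varepsilon/4$ for $r\le\delta_2(\varepsilon)$, and then taking $\eta<\min\{r/4,\varepsilon r/8\}$, produces $\dist(\bx,W_z)<\varepsilon r$, as required. The main obstacle is technical rather than conceptual: verifying that $r_0$, $\omega$, and the tubular-neighborhood constant can be chosen uniformly over $Z$ (including near $\partial Z$, handled by embedding $Z$ into a slightly larger open $C^1$ submanifold), after which the estimate is essentially a single application of the BCH bound.
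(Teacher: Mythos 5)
Your proof is correct and follows the same core strategy as the paper: establish uniform $C^1$ graph charts for $Z$ by compactness, then use the BCH estimate (Corollary~\ref{C:Lie1}(2)) to compare the displacement $\bx$ to a vector attached to the Lie algebra, and finish with the triangle inequality. The differences are technical rather than conceptual, but worth noting. (i) You take $\Omega$ to be a genuine normal tubular neighborhood $\{\exp(\bv)z : \bv\in N_z,\ \|\bv\|<\eta\}$ with a \emph{unique} base point $z(y)$; the paper instead takes $\Omega=\bigcup_{z\in Z}\exp_z\big(B_\Lg^\circ(\varepsilon r/8)\big)$ and only uses \emph{existence} of a representative $z$, so it never needs the tubular neighborhood theorem or uniqueness of nearest point, which sidesteps the delicate behavior near $\partial Z$ that you have to patch by extending $Z$. (ii) Your step from $\exp(\bx)y=\exp(\bv')\exp(\bw+\bn)\exp(-\bv_y)\,y$ in $X$ to the group identity $\exp(\bx)=\exp(\bv')\exp(\bw+\bn)\exp(-\bv_y)$ in $G$ silently uses a uniform injectivity-radius bound on the compact set $\Omega$ (to ensure the two group elements, both near $1_G$, cannot differ by a nontrivial element of $\Stab_G(y)$); this is standard but should be stated. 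The paper avoids it entirely by defining $\by:=\log\big(\exp(-\bv')\exp(\bx)\exp(\bv)\big)$ and then directly computing $\exp_z(\by)=z'\in Z$, after which the already-proved pointwise estimate $\dist(\by,W_z)\le(\varepsilon/4)\|\by\|$ finishes the argument --- this is slightly cleaner than your decomposition $\bx\approx\bw+\psi_z(\bw)$ because it uses a single estimate rather than a Pythagoras step plus modulus bound. In sum: your argument works, and the small constant-chasing at the end is fine, but the paper's two refinements (exponential balls instead of a tubular neighborhood, and working with $\by$ rather than cancelling $y$) remove exactly the two auxiliary facts your version implicitly relies on.
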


\begin{proof}
First, let us notice that there exists a function $\delta_3:(0,1)\to(0,\infty)$ such that for any $\varepsilon\in(0,1)$ and $z\in Z$, we have
\begin{equation}\label{E:tangent}
\by\in B_\Lg\big(\delta_3(\varepsilon)\big),\  \exp_z(\by)\in Z \quad \Longrightarrow \quad \dist(\by,W_z)\le\varepsilon\|\by\|.
\end{equation}
In fact, since $Z$ is compact, there exists $\tau_6>0$ such that for every $z\in Z$, there is a unique $C^1$ map $\phi_z:B_{W_z}(\tau_6)\to W_z^\bot$ with $\phi_z(0)=0$ satisfying the following property: If $\by\in B_\Lg(\tau_6)$ and $\exp_z(\by)\in Z$, then $\by=P_z\by+\phi_z(P_z\by)$, where $P_z$ is the orthogonal projection from $\Lg$ onto $W_z$.

Let $(d\phi_z)_\bw:W_z\to W_z^\bot$ be the tangent map of $\phi_z$ at $\bw\in B_{W_z}(\tau_6)$. Then $(d\phi_z)_0=0$, and the map $(z,\bw)\mt (d\phi_z)_\bw$ (as a map between bundles over $Z$ whose fibers at $z$ are $B_{W_z}(\tau_6)$ and the space of linear maps $W_z\to W_z^\bot$, respectively) is continuous. It follows that there exists a function $\delta_3:(0,1)\to(0,\tau_6]$ such that for any $z\in Z$, $\varepsilon\in(0,1)$ and $\bw\in B_{W_z}\big(\delta_3(\varepsilon)\big)$, we have $\|(d\phi_z)_\bw\|\le\varepsilon$, and hence $\|\phi_z(\bw)\|\le\varepsilon\|\bw\|$. Now, if $\by\in B_\Lg\big(\delta_3(\varepsilon)\big)$ and $\exp_z(\by)\in Z$, then
$$\dist(\by,W_z)=\|\by-P_z\by\|=\|\phi_z(P_z\by)\|\le\varepsilon\|P_z\by\|\le\varepsilon\|\by\|.$$
Hence \eqref{E:tangent} holds.
\smallskip

Define the function $\delta_2$ as
$$\delta_2(\varepsilon)=\min\big\{\delta_3(\varepsilon/4)/2,\tau_4(1)\big\},$$
where $\tau_4(\cdot)$ is as in Corollary \ref{C:Lie1}. Let $\varepsilon\in(0,1)$, $r\in(0,\delta_2(\varepsilon)]$. We verify that the neighborhood
$$\Omega=\bigcup_{z\in Z}\exp_z\big(B_\Lg^\circ(\varepsilon r/8)\big)$$
of $Z$ satisfies the required property. Let $y\in\Omega$. Then there exists $z\in Z$ such that $y=\exp_z(\bv)$ for some $\bv\in B_\Lg^\circ(\varepsilon r/8)$. Suppose $\bx\in B_\Lg(r)$ and $\exp_y(\bx)\in\Omega$. We need to show that $\dist(\bx,W_z)<\varepsilon r$.
Since $\exp_y(\bx)\in\Omega$, there exist $z'\in Z$ and $\bv'\in B_\Lg^\circ(\varepsilon r/8)$ such that $\exp_y(\bx)=\exp_{z'}(\bv')$, that is, $\exp(\bx)y=\exp(\bv')z'$.
Since
$$\|\bx\|\le r\le\delta_2(\varepsilon)\le\tau_4(1)$$ and
$$\max\big\{\|\bv\|,\|\bv'\|\big\}<\varepsilon r/8\le\tau_4(1),$$
if we write
$$\by=\log\big(\exp(-\bv')\exp(\bx)\exp(\bv)\big),$$
then it follows from Corollary \ref{C:Lie1}(2) that
\begin{equation}\label{E:exp31}
\|\by\|\le\|\bx\|+2\big(\|\bv\|+\|\bv'\|\big)<r+\frac{\varepsilon r}{2}<2r\le2\delta_2(\varepsilon)\le\delta_3(\varepsilon/4)
\end{equation}
and
\begin{equation}\label{E:exp32}
\|\bx-\by\|\le2\big(\|\bv\|+\|\bv'\|\big)<\frac{\varepsilon r}{2}.
\end{equation}
Note that
$$\exp_z(\by)=\exp(-\bv')\exp(\bx)\exp(\bv)z=\exp(-\bv')\exp(\bx)y=z'\in Z.$$
Thus, it follows from \eqref{E:tangent} and \eqref{E:exp31} that
$$\dist(\by,W_z)\le\frac{\varepsilon}{4}\|\by\|<\frac{\varepsilon r}{2}.$$
Hence, by \eqref{E:exp32}, we have
$$\dist(\bx,W_z)\le\|\bx-\by\|+\dist(\by,W_z)<\varepsilon r.$$
This proves the lemma.
\end{proof}

We now prove:

\begin{lemma}\label{L:main}
Let $G, \Gamma, X, H, f$ and $Z$ be as in Theorem \ref{T:main-discrete}, and assume the conditions in the theorem hold. Moreover, assume that $Z$ is compact (possibly with boundary). Then there exist $\tau_7\in(0,\tau_1]$ and a function $\tilde{r}_0:(0,1)\to(0,\infty)$ such that for any $\varepsilon\in(0,1)$ and $r_0\in(0,\tilde{r}_0(\varepsilon)]$, there exists a neighborhood $\Omega=\Omega(\varepsilon,r_0)$ of $Z$ satisfying the following property: For any $x\in X$, any closed ball $B\subset B_\Lh^\circ(\tau_7)$ of radius $r\le r_0$ and any $n\ge0$ with
\begin{equation}\label{E:Ad-ineq}
\frac{\varepsilon r_0}{r}\le\|(d\sigma_f)^n\|\le\frac{r_0}{r},
\end{equation}
there exists an affine hyperplane $L=L(x,B,n)$ in $\Lh$ such that
\begin{equation}\label{E:inc}
\exp_x^{-1}\big(f^{-n}(\Omega)\big)\cap B\subset L^{(\varepsilon r)},
\end{equation}
where $L^{(\varepsilon r)}$ is the $\varepsilon r$-neighborhood of $L$ in $\Lh$.
\end{lemma}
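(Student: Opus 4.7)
The plan is to feed the hypotheses of Theorem \ref{T:main-discrete} through Proposition \ref{P:linear-main} to produce, for each tangent space $W_z$ and each iterate $n$, a linear hyperplane in $\Lh$ on which the restriction of $(d\sigma_f)^n$ admits the uniform lower bound $c\,\|(d\sigma_f)^n\|$; then to use Lemma \ref{L:nbhd} to replace $Z$ locally by $W_z$; and finally to convert the group-theoretic statement ``$\exp((d\sigma_f)^n\bu)f^n(x)\in\Omega$'' into a Lie-algebra statement via the BCH-type bounds of Corollary \ref{C:Lie1}.

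\emph{Step 1.} Apply Proposition \ref{P:linear-main} with $T=d\sigma_f$, $V=\Lg$, $U=\Lh$, and $\sW = \{W_z : z\in Z\}$, where $W_z := (d\exp_z)^{-1}(T_zZ)$. Compactness of $Z$ and continuity of $z\mapsto W_z$ make $\sW$ closed in $\Gr_d(\Lg)$ (treating components of differing dimensions separately). Under the isomorphisms $d\exp_z$, the identifications $\Lg_f^{\max}=\im p(d\sigma_f)$ and $\Lh_f^{\max}=p(d\sigma_f)\Lh$ convert hypotheses (i)/(ii) of Theorem \ref{T:main-discrete} into the corresponding hypotheses of Proposition \ref{P:linear-main}, while \eqref{E:discrete-main3} becomes \eqref{E:linear-main}. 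This yields $c>0$ and, for every $(z,n)$, a linear hyperplane $L_{z,n}\subset\Lh$ with
$$\dist\big((d\sigma_f)^n\bu, W_z\big) \;\ge\; c\,\|(d\sigma_f)^n\|\,\dist(\bu, L_{z,n}), \qquad \forall\,\bu\in\Lh.$$

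\emph{Step 2.} Fix an auxiliary tolerance $\tilde\varepsilon=\Theta(c\varepsilon^2)$ and a radius $\tilde r_0(\varepsilon)>0$ small enough that (a) $C_1\tilde r_0(\varepsilon)\le\delta_2(\tilde\varepsilon)$ with $C_1$ the BCH constant from below, so Lemma \ref{L:nbhd} applies at radius $C_1 r_0$ and precision $\tilde\varepsilon$; and (b) the quadratic BCH remainder computed below is absorbed. For $r_0\le\tilde r_0(\varepsilon)$, let $\Omega=\Omega(\varepsilon,r_0)$ be the resulting neighborhood of $Z$. Given $x$, a closed ball $B\subset B_\Lh^\circ(\tau_7)$ of radius $r\le r_0$, and $n\ge 0$ satisfying \eqref{E:Ad-ineq}, consider
$$\mathcal{B} \;:=\; \{\bu\in B \,:\, \exp\big((d\sigma_f)^n\bu\big)f^n(x)\in\Omega\}.$$
If $\mathcal{B}=\emptyset$, any affine hyperplane works; otherwise pick $\bu_0\in\mathcal{B}$ and set $y_0:=\exp\big((d\sigma_f)^n\bu_0\big)f^n(x)\in\Omega$, and let $z\in Z$ be the point provided by Lemma \ref{L:nbhd} at $y_0$.

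\emph{Step 3.} For $\bu\in\mathcal{B}$, set $\bw:=(d\sigma_f)^n\bu$ and $\bw_0:=(d\sigma_f)^n\bu_0$; the upper bound in \eqref{E:Ad-ineq} gives $\|\bw\|,\|\bw_0\|\le r_0$. The element $\bx:=\log(\exp(\bw)\exp(-\bw_0))$ satisfies $\exp_{y_0}(\bx)=\exp(\bw)f^n(x)\in\Omega$, and Corollary \ref{C:Lie1}(1) gives $\|\bx\|\le C_1 r_0$; hence Lemma \ref{L:nbhd} yields $\dist(\bx, W_z)<C_1\tilde\varepsilon\, r_0$. A quadratic BCH bound (extracted from the proof of Corollary \ref{C:Lie1}, using that $\partial\Phi/\partial\bx$ vanishes at the origin) gives $\|\bx-(\bw-\bw_0)\|\le C_2\|\bw\|\|\bw_0\|\le C_2 r_0^2$. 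Triangle inequality, Step 1, and the lower bound $\|(d\sigma_f)^n\|\ge\varepsilon r_0/r$ from \eqref{E:Ad-ineq} now give
$$\dist(\bu-\bu_0, L_{z,n}) \;\le\; \frac{C_1\tilde\varepsilon\, r_0 + C_2 r_0^2}{c\,\varepsilon r_0/r} \;=\; \frac{C_1\tilde\varepsilon\, r}{c\varepsilon} + \frac{C_2\, r_0\, r}{c\varepsilon},$$
which is $\le\varepsilon r$ once $\tilde\varepsilon\le c\varepsilon^2/(2C_1)$ and $r_0\le c\varepsilon^2/(2C_2)$ (both absorbed into $\tilde r_0(\varepsilon)$). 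Taking $L:=\bu_0+L_{z,n}$ gives $\mathcal{B}\subset L^{(\varepsilon r)}$.

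\emph{Main obstacle.} The delicate point is the bookkeeping: both the linear tangent-space error $C_1\tilde\varepsilon r_0$ and the quadratic BCH remainder $C_2 r_0^2$ have to survive the amplification factor $\|(d\sigma_f)^n\|^{-1}$ (of order $r/r_0$) inherited from Proposition \ref{P:linear-main} and still land inside $\varepsilon r$, which forces $r_0$ to be quadratically small in $\varepsilon$ and $\tilde\varepsilon$ to be small relative to $c\varepsilon$. The fact that $c$ in Step 1 is independent of $n$ is precisely what the maximally expanding hypothesis buys us; without it (e.g., merely assuming $G_f$-transversality), the factor $c\|(d\sigma_f)^n\|$ would degrade polynomially in $n$ and the above estimate would fail to close.
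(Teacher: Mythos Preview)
Your Steps 1 and 2 are sound and match the paper's setup. The gap is in Step 3: the claim ``the upper bound in \eqref{E:Ad-ineq} gives $\|\bw\|,\|\bw_0\|\le r_0$'' is false. The ball $B$ has radius $r$ but is \emph{not} centered at the origin; it is only contained in $B_\Lh^\circ(\tau_7)$, so $\|\bu\|$ and $\|\bu_0\|$ can be as large as $\tau_7$, whence $\|\bw\|=\|(d\sigma_f)^n\bu\|$ can be as large as $(r_0/r)\tau_7\gg r_0$. This breaks your argument in two places. First, the expression $\bx=\log(\exp(\bw)\exp(-\bw_0))$ need not even be defined, since $\bw,\bw_0$ need not lie in the domain where the BCH expansion is valid. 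Second, even granting that $\bx$ exists (which one can show via the automorphism identity $\exp(\bw)\exp(-\bw_0)=\sigma_f^n(\exp(\bu)\exp(-\bu_0))$), the quadratic remainder estimate $\|\bx-(\bw-\bw_0)\|\le C_2\|\bw\|\|\bw_0\|$ fails: writing $[\bw,\bw_0]=[\bw-\bw_0,\bw_0]$, the leading BCH term is of order $\|\bw-\bw_0\|\cdot\|\bw_0\|\lesssim r_0\cdot (r_0/r)\tau_7$, and after dividing by $c\|(d\sigma_f)^n\|\ge c\varepsilon r_0/r$ you get an error of order $r_0\tau_7/(c\varepsilon)$, which cannot be made $\le\varepsilon r$ unless $\tau_7$ depends on $\varepsilon$---contrary to the lemma. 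Consequently your hyperplane $L=\bu_0+L_{z,n}$ is genuinely wrong.

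The paper repairs this by performing the BCH step at the $\Lh$ level, where the inputs are small: one sets $\bx':=\log(\exp(\bu)\exp(-\bu_0))\in\Lh$, so $\|\bx'\|\le 4r$ by Corollary \ref{C:Lie1}(1), and then $\exp_{y_0}\big((d\sigma_f)^n\bx'\big)\in\Omega$ with $\|(d\sigma_f)^n\bx'\|\le 4r_0$. Lemma \ref{L:nbhd} and Proposition \ref{P:linear-main} then give $\dist(\bx',L_{z,n})<\varepsilon r/4$. The remaining issue is that $\bx'\neq\bu-\bu_0$; the paper uses Lemma \ref{L:Lie2} to linearize: there is $T_{\bu_0}\in\GL(\Lh)$ with $\|T_{\bu_0}\|\le 2$ and $\|\bu-\bu_0-T_{\bu_0}\bx'\|\le(\varepsilon/8)\|\bx'\|\le\varepsilon r/2$. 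The correct hyperplane is therefore $L=\bu_0+T_{\bu_0}(L_{z,n})$, and this extra linear twist is precisely what your proposal is missing.
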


Note that in the statement of Lemma \ref{L:main}, we do not require that $\exp_x$ is injective on $B_\Lh^\circ(\tau_7)$.

\begin{proof}
For $z\in Z$, let $W_z$ be the subspace of $\Lg$ given in Lemma \ref{L:nbhd}. We want to apply Proposition \ref{P:linear-main} to $V=\Lg$, $U=\Lh$, $T=d\sigma_f$ and $\sW=\{W_z:z\in Z\}$. Since $Z$ is $C^1$, the map $Z\to\Gr_{\dim Z}(\Lg)$, $z\mt W_z$ is continuous. It then follows from the compactness of $Z$ that $\sW$ is compact. Condition (i) (resp.\ (ii)) in Theorem \ref{T:main-discrete} implies condition (i) (resp.\ (ii)) in Proposition \ref{P:linear-main}, and also condition \eqref{E:discrete-main3} implies \eqref{E:linear-main}. Thus, all conditions in Proposition \ref{P:linear-main} hold. It follows that there exist $c>0$ such that for any $z\in Z$ and $n\ge0$, there exists a linear hyperplane $L_{z,n}$ in $\Lh$ with
\begin{equation}\label{E:main-inq}
\dist\big((d\sigma_f)^n\bx,W_z\big)\ge c\|(d\sigma_f)^n\|\dist(\bx,L_{z,n}), \qquad \forall \,  \bx\in\Lh.
\end{equation}
Let
$$\tau_7=\min\{\tau_4(1),\tau_5\},$$
where $\tau_4(\cdot)$ and $\tau_5$ are as in Corollary \ref{C:Lie1} and Lemma \ref{L:Lie2}, and define the function $\tilde{r}_0$ as
$$\tilde{r}_0(\varepsilon)=\frac14\min\left\{\delta_1(\varepsilon/8),\delta_2(c\varepsilon^2/4)\right\},$$
where $\delta_1(\cdot)$ and $\delta_2(\cdot)$ are as in Lemmas \ref{L:Lie2} and \ref{L:nbhd}. Let $\varepsilon\in(0,1)$, $r_0\in(0,\tilde{r}_0(\varepsilon)]$. By Lemma \ref{L:nbhd} and the choice of $\tilde{r}_0(\varepsilon)$, there exists a neighborhood $\Omega$ of $Z$ such that for any $y_0\in\Omega$, there is $z_0\in Z$ such that
\begin{equation}\label{E:dist-m}
\bv\in B_\Lg(4r_0), \exp_{y_0}(\bv)\in\Omega \quad \Longrightarrow \quad \dist(\bv,W_{z_0})<\frac{c\varepsilon^2}{4}r_0.
\end{equation}
In what follows, we prove that $\Omega$ satisfies the required property in Lemma \ref{L:main}.

Let $x\in X$, $B\subset B_\Lh^\circ(\tau_7)$ be a closed ball of radius $r\le r_0$, and $n\ge0$ satisfy \eqref{E:Ad-ineq}. We need to show that there exists an affine hyperplane $L\subset \Lh$ satisfying \eqref{E:inc}. Without loss of generality, assume that $\exp_x^{-1}\big(f^{-n}(\Omega)\big)\cap B\ne\varnothing$. We choose and fix a point $\by_0\in\exp_x^{-1}\big(f^{-n}(\Omega)\big)\cap B$. Let $y_0=f^n\big(\exp_x(\by_0)\big)\in\Omega$, and let $z_0\in Z$ satisfy \eqref{E:dist-m}. Since $\by_0\in B\subset B_\Lh^\circ(\tau_7)$ and $\tau_7\le\tau_5$, it follows from Lemma \ref{L:Lie2} and the choice of $\tilde{r}_0(\varepsilon)$ that there exists $T_{\by_0}\in \GL(\Lh)$ with $\|T_{\by_0}\|\le2$ such that
\begin{equation}\label{E:tangent-m}
\bx\in B_\Lh\big(4\tilde{r}_0(\varepsilon)\big) \quad \Longrightarrow \quad \left\|\log\big(\exp(\bx)\exp(\by_0)\big)-\by_0-T_{\by_0}\bx\right\|\le\frac{\varepsilon}{8}\|\bx\|.
\end{equation}
We verify that the hyperplane
$$L=\by_0+T_{\by_0}(L_{z_0,n})$$
satisfies \eqref{E:inc}.

Let $\by\in \exp_x^{-1}\big(f^{-n}(\Omega)\big)\cap B$. We need to prove that $\by\in L^{(\varepsilon r)}$. Let $y=f^n\big(\exp_x(\by)\big)\in\Omega$, $\bx=\log\big(\exp(\by)\exp(-\by_0)\big)\in\Lh$. Since $\by_0,\by\in B$, we have $\|\by-\by_0\|\le 2r$. Note also that $B\subset B_\Lh^\circ(\tau_7)$ and $\tau_7\le\tau_4(1)$. It then follows from Corollary \ref{C:Lie1}(1) that
$$\|\bx\|\le2\|\by-\by_0\|\le4r.$$
Thus
$$\left\|(d\sigma_f)^n\bx\right\|\le\left\|(d\sigma_f)^n\right\|\|\bx\|\le\frac{r_0}{r}\cdot4r=4r_0.$$
Note also that
\begin{align*}
\exp_{y_0}\big((d\sigma_f)^n\bx\big)&=\exp\big((d\sigma_f)^n\bx\big)y_0=\sigma_f^n\big(\exp(\bx)\big)f^n(\exp(\by_0)x)\\
&\stackrel{\eqref{E:affine2}}=f^n(\exp(\bx)\exp(\by_0)x)=f^n(\exp(\by)x)=y\in\Omega.
\end{align*}
Hence, it follows from the choice of $z_0$ that
$$\dist\big((d\sigma_f)^n\bx,W_{z_0}\big)<\frac{c\varepsilon^2}{4}r_0.$$
Together with \eqref{E:main-inq}, this implies that
\begin{align*}
\dist(\bx,L_{z_0,n})&\le c^{-1}\left\|(d\sigma_f)^n\right\|^{-1}\dist\big((d\sigma_f)^n\bx,W_{z_0}\big)\\
&<c^{-1}\cdot\frac{r}{\varepsilon r_0}\cdot\frac{c\varepsilon^2}{4}r_0=\frac{\varepsilon r}{4}.
\end{align*}
Hence, if we let $\bz\in L_{z_0,n}$ be such that $\|\bx-\bz\|=\dist(\bx,L_{z_0,n})$, then
\begin{align*}
\dist(T_{\by_0}\bx,T_{\by_0}L_{z_0,n})&\le\|T_{\by_0}\bx-T_{\by_0}\bz\|\le\|T_{\by_0}\|\|\bx-\bz\|\\
&\le2\dist(\bx,L_{z_0,n})<\frac{\varepsilon r}{2}.
\end{align*}
On the other hand, since $\|\bx\|\le4r\le4r_0\le4\tilde{r}_0(\varepsilon)$ and $\exp(\by)=\exp(\bx)\exp(\by_0)$, it follows from \eqref{E:tangent-m} that
$$\|\by-\by_0-T_{\by_0}\bx\|\le\frac{\varepsilon}{8}\|\bx\|\le\frac{\varepsilon r}{2}.$$
This implies that
\begin{align*}
\dist(\by,L)&=\dist\big(\by-\by_0,T_{\by_0}(L_{z_0,n})\big)\\
&\le\dist\big(T_{\by_0}\bx,T_{\by_0}(L_{z_0,n})\big)+\|\by-\by_0-T_{\by_0}\bx\|\\
&<\frac{\varepsilon r}{2}+\frac{\varepsilon r}{2}\le\varepsilon r.
\end{align*}
Hence $\by\in L^{(\varepsilon r)}$. This completes the proof.
\end{proof}

\subsection{Proof of Theorem \ref{T:main-discrete}}
We now use Lemma \ref{L:main} to prove Theorem \ref{T:main-discrete}.

\begin{proof}[Proof of Theorem \ref{T:main-discrete}]
Since any $C^1$ submanifold of $X$ is the union of countably many compact $C^1$ submanifolds (possibly with boundaries), we may assume without loss of generality that $Z$ is compact. Let $x\in X$. We need to prove that for every $h_0\in H$, there is an open neighborhood $U$ of $h_0$ in $H$ such that the set $\{h\in U:\overline{\{f^n(hx):n\ge0\}}\cap Z=\emptyset\}$
is HAW on $U$. By replacing $x$ with $h_0x$, we may assume that $h_0=1_G$. Let $\tau_7>0$ be as in Lemma \ref{L:main}, and let $U=\exp\big(B_\Lh^\circ(\tau_7)\big)$. Since the exponential map restricts to a diffeomorphism from $B_\Lh^\circ(\tau_7)$ onto $U$, in view of Lemma \ref{L:HPW2}, it suffices to prove that the set
\begin{equation}\label{E:target}
\left\{\bx\in B_\Lh^\circ(\tau_7):\overline{\{f^n\big(\exp_x(\bx)\big):n\ge0\}}\cap Z=\emptyset\right\}
\end{equation}
is HPW on $B_\Lh^\circ(\tau_7)$.
\smallskip

Let $\beta\in\big(0,\beta_0(\dim\Lh)\big)$ be fixed. By Lemma \ref{L:linear1}, there exists $C>1$ such that
\begin{equation}\label{E:mo}
C^{-1}n^{s-1}\rho^n\le\|(d\sigma_f)^n\|\le Cn^{s-1}\rho^n, \qquad \forall \,  n\ge0,
\end{equation}
where $\rho=\rho(d\sigma_f)>1$ and $s=s(d\sigma_f)$. Let $\ell\in\NN$ be large such that
\begin{equation}\label{E:n1}
\rho^{2^\ell-1}\beta^\ell\ge C^2
\end{equation}
and
\begin{equation}\label{E:n2}
C\beta^\ell\le1.
\end{equation}
We use Lemma \ref{L:main} with $\varepsilon=\beta^{\ell+1}$ to describe a winning strategy for Alice when playing the $\beta$-hyperplane percentage game on $B_\Lh^\circ(\tau_7)$ with target set \eqref{E:target}. As remarked in \S \ref{S:HPW}, we may assume that Bob will play so that $r_i\to0$ and $r_0\le\tilde{r}_0(\beta^{\ell+1})$, where $\tilde{r}_0(\cdot)$ is as in Lemma \ref{L:main}. Let us partition the game into stages. For $k\ge0$, we define the $k$th stage to be the set of indices $i\ge0$ for which
\begin{equation}\label{E:i0}
\beta^{\ell(k+1)}r_0<r_i\le\beta^{\ell k}r_0.
\end{equation}
Then each stage is finite and contains at least $\ell$ indices.
Suppose that the $k$th stage starts when Bob chooses the ball $B_{i_k}$ in $\Lh$, that is, $i_k$ is the smallest index in the $k$th stage. In particular, we have $i_0=0$.
It follows from the rule of the game that
\begin{equation}\label{E:i}
\beta^{\ell k+1}r_0<r_{i_k}\le\beta^{\ell k}r_0.
\end{equation}
Consider the set of integers
\begin{equation}\label{E:Nk}
\cN_k=\left\{n\ge0:\beta^{-\ell(k-1)}\le \|(d\sigma_f)^n\|<\beta^{-\ell k}\right\}.
\end{equation}
Note that by \eqref{E:mo} and \eqref{E:n2}, we have $\|(d\sigma_f)^n\|\ge C^{-1}\ge\beta^\ell$ for any $n\ge0$. Thus
\begin{equation}\label{E:union}
\bigcup_{k\ge0}\cN_k=\NN\cup\{0\}.
\end{equation}
Note also that for any $n_1,n_2\in\cN_k$ with $n_1<n_2$, we have
$$C^{-2}\rho^{n_2-n_1}\le\frac{C^{-1}n_2^{s-1}\rho^{n_2}}{Cn_1^{s-1}\rho^{n_1}}\stackrel{\eqref{E:mo}}
\le\frac{\|(d\sigma_f)^{n_2}\|}{\|(d\sigma_f)^{n_1}\|}\stackrel{\eqref{E:Nk}}<\frac{\beta^{-\ell k}}{\beta^{-\ell(k-1)}}
=\beta^{-\ell} \stackrel{\eqref{E:n1}}\le C^{-2}\rho^{2^\ell-1},$$
which implies that $n_2-n_1<2^\ell-1$. Hence
\begin{equation}\label{E:stage2}
\#\cN_k<2^\ell.
\end{equation}
It follows from \eqref{E:i} and \eqref{E:Nk} that if $n\in\cN_k$, then
$$r_{i_k}\|(d\sigma_f)^n\|\in[\beta^{\ell k+1}r_0\cdot\beta^{-\ell(k-1)},\beta^{\ell k}r_0\cdot\beta^{-\ell k}]=[\beta^{\ell+1}r_0,r_0].$$
Therefore, if we let $\Omega=\Omega(\beta^{\ell+1},r_0)$ be the neighborhood of $Z$ given by Lemma \ref{L:main}, then for any $n\in\cN_k$, there exists an affine hyperplane $L(B_{i_k},n)$ in $\Lh$ such that
\begin{equation}\label{E:nbhd0}
\exp_x^{-1}\big(f^{-n}(\Omega)\big)\cap B_{i_k}\subset L(B_{i_k},n)^{(\beta^{\ell+1}r_{i_k})}.
\end{equation}
Let Alice's $i_k$-th move be the hyperplane neighborhoods
\begin{equation}\label{E:nbhd}
\left\{L(B_{i_k},n)^{(\beta^{\ell+1}r_{i_k})}:n\in\cN_k\right\}.
\end{equation}
More generally, for any index $i$ in the $k$th stage, after Bob choosing the ball $B_i$, let Alice choose those neighborhoods in \eqref{E:nbhd} which intersect $B_i$. Note that
$$\beta^{\ell+1}r_{i_k} \stackrel{\eqref{E:i}}\le \beta^{\ell+1}\cdot\beta^{\ell k}r_0=\beta\cdot\beta^{\ell(k+1)}r_0 \stackrel{\eqref{E:i0}}<\beta r_i.$$
So Alice's moves are legal. We prove that this strategy guarantees a win for Alice.
\smallskip

In view of the rule of the game, it follows that if $i$ is an index in the $k$th stage, then
\begin{equation}\label{E:stage}
\#\left\{n\in\cN_k:B_{i+1}\cap L(B_{i_k},n)^{(\beta^{\ell+1}r_{i_k})}\ne\emptyset\right\}\le\frac{\#\cN_k}{2^{i+1-i_k}}\stackrel{\eqref{E:stage2}}<2^{\ell-(i+1-i_k)}.
\end{equation}
On the other hand, since each stage contains at least $\ell$ indices, the index $i_k+\ell-1$ is in the $k$th stage. Substituting $i=i_k+\ell-1$ into \eqref{E:stage}, we obtain
$$\#\left\{n\in\cN_k:B_{i_k+\ell}\cap L(B_{i_k},n)^{(\beta^{\ell+1}r_{i_k})}\ne\emptyset\right\}<1.$$
This means that
$$B_{i_k+\ell}\cap L(B_{i_k},n)^{(\beta^{\ell+1}r_{i_k})}=\emptyset  \qquad \forall \, n\in\cN_k.$$
Together with \eqref{E:nbhd0}, this implies that
$$B_{i_k+\ell}\cap\exp_x^{-1}\big(f^{-n}(\Omega)\big)=\emptyset  \qquad \forall \, n\in\cN_k.$$
Hence, for any $n\in\cN_k$, the unique point $\bx_\infty$ in $\bigcap_{i=0}^\infty B_i$ is not contained in $\exp_x^{-1}\big(f^{-n}(\Omega)\big)$, or equivalently, $f^n\big(\exp_x(\bx_\infty)\big)\notin\Omega$. In view of \eqref{E:union}, it follows that $\bx_\infty$ is contained in the target set \eqref{E:target}. Hence Alice wins.
\end{proof}

\begin{remark}\label{R:omega2}
In view of Remark \ref{R:omega1}, it follows that without condition \eqref{E:discrete-main3}, Lemma \ref{L:main} remains valid for sufficient large $n$. This implies that if condition \eqref{E:discrete-main3} is dropped and the submanifold $Z$ in Theorem \ref{T:main-discrete} is compact, then there exists $N=N(Z)$ such that the set $\left\{h\in H:\overline{\{f^n(hx):n\ge N\}}\cap Z=\emptyset\right\}$ is HAW on $H$, and hence for a general submanifold $Z$, the set $\{h\in H:\omega(hx)\cap Z=\emptyset\}$ is HAW on $H$. In turn, if condition \eqref{E:continuous-main4} in Theorem \ref{T:main-continuous} is dropped, the set $\{h\in H:\omega(hx)\cap Z=\emptyset\}$ is HAW on $H$.
\end{remark}

\section{Geodesic flows on locally symmetric spaces}\label{S:geo}

This section is devoted to the proof of Theorems \ref{B1} and \ref{B2}. We first use Theorem \ref{T:main-continuous} (and its proof) to prove a result on semisimple Lie groups.

\subsection{A proposition on semisimple groups}\label{S:ss}

Let $G$ be a noncompact semisimple Lie group with finitely many connected components, $K\subset G$ a maximal compact subgroup, $\Lg$ and $\Lk$ the Lie algebras of $G$ and $K$  respectively, and $\Lp$ the orthogonal complement of $\Lk$ in $\Lg$ with respect to the Killing form on $\Lg$. We assume that the identity component $G^\circ$ of $G$ has finite center. Then $\Lg=\Lk\oplus\Lp$ is a Cartan decomposition. Note that the identity component $K^\circ$ of $K$ is a maximal compact subgroup of $G^\circ$.

\begin{proposition}\label{P:ss}
Let $G$, $K$ and $\Lp$ be as above, $\Gamma\subset G$ a discrete subgroup, $X=G/\Gamma$, $\bv\in\Lp\smz$, and $F=\{g_t:t\in\RR\}$ the one-parameter subgroup given by $g_t=\exp(t\bv)$.
\begin{itemize}
  \item[(1)] Let $x_1,x_2\in X$ be such that $Kx_1\ne Kx_2$. Then the set $$\{k\in K:kx_1\in E(F,Kx_2)\}$$ is HAW on $K$.
  \item[(2)] Let $K'\subset K$ be a closed subgroup with $\dim K'<\dim K$, and $S\subset X$ be a finite subset. Then there exists an $F$-invariant closed subset of $X$ that does not intersect $K'S$ but intersects every $K^\circ$-orbit in $X$.
\end{itemize}
\end{proposition}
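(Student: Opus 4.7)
I would apply Theorem~\ref{T:main-continuous} with $H=K$, $x=x_1$, and $Z=Kx_2$, separately for $F^+$ and $F^-$, and intersect the two resulting HAW sets. Fix a maximal abelian $\La\subset\Lp$ containing $\bv$ and write $\Lg_\CC=\Lg_0\oplus\bigoplus_\alpha\Lg_\alpha$ for the restricted root decomposition. Then $\Ad g_1$ is $\RR$-diagonalizable with eigenvalues $e^{\alpha(\bv)}$; since $G$ is semisimple noncompact and $\bv\ne 0$ there is a root $\alpha$ with $\alpha(\bv)>0$, so $\rho(\Ad g_1)>1$ and $\Lg_{F^+}^{\max}=\bigoplus_{\alpha(\bv)=\lambda_{\max}(\bv)}\Lg_\alpha$. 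Using the Cartan involution $\theta$ (with $\theta\Lg_\alpha=\Lg_{-\alpha}$ and $\theta|_\Lk=\mathrm{id}$), the assignment $X_\alpha\mapsto X_\alpha+\theta X_\alpha\in\Lk$ provides a section of the projection $p(\Ad g_1)\colon\Lg\to\Lg_{F^+}^{\max}$ restricted to $\Lk$, so $p(\Ad g_1)(\Lk)=\Lg_{F^+}^{\max}$ and hence $H_{F^+}^{\max}=G_{F^+}^{\max}$. Condition~(ii) of the theorem now reduces to checking $F$-transversality ($\bv\in\Lp$, $\Lk\cap\Lp=0$) and $\Lg_{F^+}^{\max}\cap(\Lk+\RR\bv)=0$; the latter follows because $\theta$-invariance forces an element of $\Lk$ with a nonzero component in $\Lg_\alpha$ to have an equally large component in $\Lg_{-\alpha}$, while $\bv\in\La\subset\Lg_0$, whereas $\Lg_{F^+}^{\max}$ lives entirely in positive-maximal root spaces.

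\textbf{Dealing with \eqref{E:continuous-main4}.} The subtle point is that condition~\eqref{E:continuous-main4} fails at $t=0$: since $Z=Kx_2$ is itself a $K$-orbit, $T_z(Hz)=T_zZ\subset T_zZ\oplus T_z(Fz)$ for every $z\in Z$. I would bypass this by invoking the $\omega$-limit variant announced in Remark~\ref{R:main-continuous}(4) (proved via Remark~\ref{R:omega2} in the discrete case), which drops \eqref{E:continuous-main4} and yields that $\{k\in K:\omega_{F^\pm}(kx_1)\cap Kx_2=\emptyset\}$ is HAW. The only remaining part of the forward/backward orbit closure is the transient $\RR$-orbit itself, so I would also control $\{k\in K:Fkx_1\cap Kx_2\ne\emptyset\}=\{k:kx_1\in F\cdot Kx_2\}$; this set is the preimage under the orbit map $k\mapsto kx_1$ of $Kx_1\cap F\cdot Kx_2$, which is a proper $C^1$ submanifold of $Kx_1$ thanks to $Kx_1\ne Kx_2$ and the dimension inequality coming from $\dim\Lp\ge 2$ for noncompact semisimple $G$. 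Its preimage in $K$ is thus contained in a proper $C^1$ submanifold, and complements of such are HAW. Intersecting the three HAW sets concludes Part~(1).

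\textbf{Plan for Part (2).} The same analysis applies verbatim with $Z=K's$ in place of $Kx_2$ and $H=K^\circ$: the Lie-algebraic transversality verifications are identical, and the hypothesis $\dim K'<\dim K$ only makes the relevant dimension counts more favorable. Consequently, for every $x\in X$ and every $s\in S$, the set $\{k\in K^\circ:\overline{Fkx}\cap K's=\emptyset\}$ is HAW on $K^\circ$, and intersecting over the finite set $S$ gives that $\{k\in K^\circ:\overline{Fkx}\cap K'S=\emptyset\}$ is HAW, hence dense in $K^\circ$. I would then fix a countable dense subset $\{x_n\}\subset X$, choose $k_n\in K^\circ$ with $y_n:=k_nx_n$ satisfying $\overline{Fy_n}\cap K'S=\emptyset$, and set
\[
Y \;:=\; \overline{\bigcup_{n\ge 1}\overline{Fy_n}}.
\]
By construction $Y$ is closed and $F$-invariant, and since $\pi(y_n)=\pi(x_n)$ under the projection $\pi\colon X\to X/K^\circ$ and $\{\pi(x_n)\}$ is dense in $X/K^\circ$, one has $K^\circ\cdot Y=X$, so $Y$ meets every $K^\circ$-orbit.

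\textbf{Main obstacle.} The hard part is ensuring $Y\cap K'S=\emptyset$: each individual $\overline{Fy_n}$ has positive distance to the compact set $K'S$, but these distances could shrink as $n\to\infty$, so naively the closure of the union could touch $K'S$. To overcome this I would combine the HAW statement from Part~(1) with a bounded-orbits HAW property (available in the semisimple setting via Kleinbock--Margulis-type results on $E(F^+,\emptyset)$), allowing one to insist that each $y_n$ be chosen with $\overline{Fy_n}$ contained in a single prescribed compact set $C\subset X\setminus K'S$ independent of $n$; this forces $Y\subset C$ and hence $Y\cap K'S=\emptyset$. Verifying that a single compact $C\subset X\setminus K'S$ can be chosen to work uniformly over the dense family $\{x_n\}$, exploiting the $K^\circ$-invariance of $X\setminus K'S$ together with an exhaustion of $X/K^\circ$, is the most delicate step of the construction.
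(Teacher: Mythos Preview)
Your Lie-algebraic verifications (diagonalizability, $H_{F^+}^{\max}=G_{F^+}^{\max}$, $\Lg_{F^+}^{\max}\cap(\Lk+\RR\bv)=0$) are correct and match the paper's. The gap is in how you dispose of the failure of \eqref{E:continuous-main4}. The paper does \emph{not} fall back on the $\omega$-limit variant; instead it uses $Kx_1\ne Kx_2$ and compactness to find $\varepsilon>0$ with $F_{[-\varepsilon,\varepsilon]}Kx_1\cap Kx_2=\varnothing$, and then applies Theorem~\ref{T:main-continuous} with $Z=g_{-\varepsilon}Kx_2$ rather than $Kx_2$. After this shift, \eqref{E:continuous-main4} becomes $(\Ad g_t)\Lk\not\subset(\Ad g_{-\varepsilon})\Lk\oplus\RR\bv$ for $t\ge 0$, and this is verified by a short Killing-form computation; the problematic $t=0$ case is no longer trivially false because $(\Ad g_{-\varepsilon})\Lk\ne\Lk$. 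Your alternative---controlling the transient set $\{k:Fkx_1\cap Kx_2\ne\varnothing\}$ via the claim that $Kx_1\cap FKx_2$ is a proper $C^1$ submanifold---does not go through: $FKx_2$ is only an immersed submanifold and can accumulate on itself badly (when $\Gamma$ is a lattice it is typically dense in $X$), so there is no reason its intersection with $Kx_1$ is a submanifold, or even contained in a countable union of proper ones.

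\textbf{Part (2).} Here the two arguments diverge more seriously, and your acknowledged obstacle is fatal in the stated generality. You build the invariant set from below as $\overline{\bigcup_n\overline{Fy_n}}$; the paper builds it from above, as the complement of an $F$-invariant open set. The crucial observation is that the constants $\tau_7,\beta,r_0$ and the neighborhood $\Omega$ of $Z=F_{[0,\tau]}K'S$ produced in the proof of Theorem~\ref{T:main-discrete} (see Lemma~\ref{L:main}) do not depend on the basepoint $x$. Running the hyperplane-percentage argument simultaneously for $g_\tau$ and $g_\tau^{-1}$ shows that for \emph{every} $x\in X$ the set $B_\Lk^\circ(\tau_7)\sm\bigcup_{n\in\ZZ}\exp_x^{-1}(g_{n\tau}\Omega)$ is nonempty, i.e.\ $\bigcup_{n\in\ZZ}g_{n\tau}\Omega$ contains no $K^\circ$-orbit. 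One then takes any open $U$ with $K'S\subset U$ and $FU\subset\bigcup_{n}g_{n\tau}\Omega$, and the desired set is $X\sm FU$. Your proposed rescue via bounded $F$-orbits cannot work here because $\Gamma$ is only assumed discrete: if, say, $\Gamma$ is trivial then $X=G$ has no bounded $F$-orbits at all, yet the proposition still holds.
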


\begin{proof}
(1) For a subset $A\subset\RR$, let us denote $F_A=\{g_t:t\in A\}$. Since $K$ is compact and $Kx_1\ne Kx_2$, there exists $\varepsilon>0$ such that $F_{[-\varepsilon,\varepsilon]}Kx_1\cap Kx_2=\varnothing$. Then for $k\in K$, we have
$kx_1\in E(F,Kx_2)$ if and only if both $\overline{F_{[\varepsilon,\infty)}kx_1}\cap Kx_2$ and $\overline{F_{(-\infty,-\varepsilon]}kx_1}\cap Kx_2$ are empty. Hence, to prove part (1), it is enough to prove that the sets
\begin{equation}\label{E:+e}
\left\{k\in K:\overline{F_{[\varepsilon,\infty)}kx_1}\cap K x_2=\varnothing\right\}
\end{equation}
and
\begin{equation}\label{E:-e}
\left\{k\in K:\overline{F_{(-\infty,-\varepsilon]}kx_1}\cap K x_2=\varnothing\right\}
\end{equation}
are HAW on $K$.

Let us prove that the set \eqref{E:+e} is HAW. Note that for $k\in K$, we have
$$\overline{F_{[\varepsilon,\infty)}kx_1}\cap K x_2=g_\varepsilon\big(\overline{F^+kx_1}\cap g_{-\varepsilon}Kx_2\big).$$
Thus, it suffices to show that the set
\begin{equation}\label{E:+e2}
\left\{k\in K:kx_1\in E(F^+,g_{-\varepsilon}Kx_2)\right\}
\end{equation}
is HAW. By Theorem \ref{T:main-continuous}, we only need to verify that $\rho(\Ad g_1)>1$, $Z=g_{-\varepsilon}Kx_2$ is $F$-transversal, and that conditions \eqref{E:continuous-main3} and \eqref{E:continuous-main4} hold for $H=K$.
The latter three conditions translate respectively as
\begin{align}
\bv & \notin(\Ad g_{-\varepsilon}) \Lk, \label{E:sym-1}\\
\Lk_{F^+}^{\max} & \not\subset(\Ad g_{-\varepsilon})\Lk\oplus\RR\bv, \label{E:sym-2}\\
(\Ad g_t) \Lk & \not\subset(\Ad g_{-\varepsilon}) \Lk\oplus\RR\bv \qquad \forall \, t\ge0. \label{E:sym-3}
\end{align}

To verify these conditions, let $\La$ be a maximal abelian subspace of $\Lp$ containing $\bv$, and let $\Sigma\subset\La^*$ be the restricted root system of $(\Lg,\La)$. Then the set of eigenvalues of $\Ad g_1$ is $\left\{e^{\lambda(\bv)}:\lambda\in\Sigma\right\}\cup\{1\}$. Since $\bv\ne0$, we have $\omega:=\max_{\lambda\in\Sigma}\lambda(\bv)>0$. It follows that $\rho(\Ad g_1)=e^\omega>1$.
\smallskip

Next, notice that $\bv=(\Ad g_{-\varepsilon})\bv\in(\Ad g_{-\varepsilon})\Lp$. Hence \eqref{E:sym-1} is clear.
\smallskip

To verify \eqref{E:sym-2}, recall that $\Lk_{F^+}^{\max}=p(\Ad g_1)(\Lk)$, where $p$ is the polynomial given in \S \ref{sub:continuous}. Let $\Lg=\Lg_0\oplus\bigoplus_{\lambda\in\Sigma}\Lg_\lambda$ be the restricted root space decomposition. Then $p(\Ad g_1)$ is the projection onto $\bigoplus_{\lambda(\bv)=\omega}\Lg_\lambda$ along $\Lg_0\oplus\bigoplus_{\lambda(\bv)<\omega}\Lg_\lambda$. Let $\lambda_0\in\Sigma$ be such that $\lambda_0(\bv)=\omega$. We first claim that $\Lg_{\lambda_0}\subset\Lk_{F^+}^{\max}$. In fact, if $\theta$ is the Cartan involution of $\Lg$ corresponding to the Cartan decomposition $\Lg=\Lk\oplus\Lp$, then for any $\bw\in\Lg_{\lambda_0}$ we have $\theta\bw\in\Lg_{-\lambda_0}$ and $\bw+\theta\bw\in\Lk$, and hence $\bw=p(\Ad g_1)(\bw+\theta\bw)\in\Lk_{F^+}^{\max}$, proving the claim. On the other hand, it follows from the Iwasawa decomposition (relative to a set of positive roots containing $\lambda_0$) that $\Lg_{\lambda_0}\not\subset\Lk\oplus\RR\bv$. Applying $\Ad g_{-\varepsilon}$ to both sides, we obtain $\Lg_{\lambda_0}\not\subset(\Ad g_{-\varepsilon})\Lk\oplus\RR\bv$. This, together with $\Lg_{\lambda_0}\subset\Lk_{F^+}^{\max}$, implies \eqref{E:sym-2}.

\smallskip

We now verify \eqref{E:sym-3}. Suppose the contrary. Then there exists $t\ge0$ such that $(\Ad g_{t+\varepsilon})\Lk\subset\Lk\oplus\RR\bv$.
Since $K^\circ$ is a maximal compact subgroup of $G^\circ$, it is self-normalizing in $G^\circ$. It follows that $(\Ad g_{t+\varepsilon})\Lk\ne\Lk$.
Let $\bx\in\Lk$ be such that $(\Ad g_{t+\varepsilon})\bx\notin\Lk$. Then there exist $\by\in\Lk$ and $b\in\RR\smz$ such that $$(\Ad g_{t+\varepsilon})\bx=\by+b\bv.$$
Taking the Cartan involution $\theta$ on both sides, we obtain
$$(\Ad g_{t+\varepsilon}^{-1})\bx=\by-b\bv.$$
It follows that
$$(\Ad g_{t+\varepsilon})\bx-(\Ad g_{t+\varepsilon}^{-1})\bx=2b\bv.$$
Let $\kappa(\cdot,\cdot)$ be the Killing form on $\Lg$. Since $\kappa|_{\Lp\times\Lp}$ is positive definite, we have
\begin{align*}
0&\ne\kappa(2b\bv,\bv)=\kappa\big((\Ad g_{t+\varepsilon})\bx,\bv\big)-\kappa\big((\Ad g_{t+\varepsilon}^{-1})\bx,\bv\big)\\
&=\kappa\big(\bx,(\Ad g_{t+\varepsilon}^{-1})\bv\big)-\kappa\big(\bx,(\Ad g_{t+\varepsilon})\bv\big)=\kappa(\bx,\bv)-\kappa(\bx,\bv)=0,
\end{align*}
a contradiction. This completes the verification of the required conditions, thus proves the set \eqref{E:+e} is HAW. A similar argument with $\bv$ replaced by $-\bv$ shows that the set \eqref{E:-e} is also HAW. This completes the proof of part (1).
\smallskip

(2) The proof is similar to that of Theorem \ref{T:main-continuous}. Let us sketch the argument and leave the details to the reader. First, we pick $\tau>0$ such that $Z:=F_{[0,\tau]}K'S$ is a smooth submanifold of $X$, and such that \eqref{E:discrete-main2} and \eqref{E:discrete-main3} hold for both $(H,f)=(K^\circ,g_\tau)$ and $(H,f)=(K^\circ,g_\tau^{-1})$. Then the conditions of Theorem \ref{T:main-discrete} are satisfied for both cases. As in the proof of Theorem \ref{T:main-discrete}, it can be shown that there exist positive constants $\tau_7$, $\beta$, $r_0$ and a neighborhood $\Omega$ of $Z$ such that for every $x\in X$, Alice has a winning strategy for the $\beta$-hyperplane percentage game on $B_\Lk^\circ(\tau_7)$ with target set
\begin{equation}\label{E:newtarget}
B_\Lk^\circ(\tau_7)\sm\bigcup_{n\in\ZZ}\exp_x^{-1}(g_{n\tau}\Omega),
\end{equation}
provided Bob's initial ball $B_0$ has the prescribed radius $r_0$. (A major difference is that we are now working with both $f=g_\tau$ and $f=g_\tau^{-1}$ simultaneously. So we need to replace ``$n\ge0$" by ``$n\in\ZZ$" in the definition of $\cN_k$ in \eqref{E:Nk}, and to replace \eqref{E:n1} by the slightly stronger condition $\rho^{2^{\ell-1}-1}\beta^\ell\ge C^2$ so that \eqref{E:stage2} still holds.) In particular, the set \eqref{E:newtarget} is nonempty. This implies that the set $\bigcup_{n\in\ZZ}g_{n\tau}\Omega$ does not contain any $K^\circ$-orbit in $X$. On the other hand, it is straightforward to show that $\bigcap_{t\in[0,\tau]}g_t^{-1}\Omega$ contains an open neighborhood $U$ of $K'S$. This implies $FU\subset \bigcup_{n\in\ZZ}g_{n\tau}\Omega$. Then the $F$-invariant closed set $X\sm FU$ satisfies the requirement.
\end{proof}

\subsection{Proofs of Theorems \ref{B1} and \ref{B2}}

We first review some basic facts concerning locally symmetric spaces. Let $Y$ be a locally symmetric space of noncompact type, and let $\tilde{Y}$ be its universal cover. The isometry group $G$ of $\tilde{Y}$ has finitely many connected components, and its identity component is a semisimple Lie group without compact factors and with trivial center. Let $y_0\in Y$, and $\tilde{y}_0\in\tilde{Y}$ be a preimage of $y_0$. The stabilizer $K:=\Stab_G(\tilde{y}_0)$ is a maximal compact subgroup of $G$. We identify the globally symmetric space $\tilde{Y}$ with $K\backslash G$, and view the fundamental group $\Gamma:=\pi_1(Y)$ as a subgroup of $G$ via deck transformations. Then $Y$ can be identified with $K\backslash\ggm$.

Let $\Lg$, $\Lk$ and $\Lp$ be as in \S \ref{S:ss}. Then we have a natural identification $T_{y_0}Y\cong \Lp$. Let $\Lp_1$ be the unit sphere in $\Lp$ (with respect to the metric on $T_{y_0}Y$) centered at $0$, which is identified with $S_{y_0}(Y)$. For $\bv\in\Lp_1$, let $\gamma(\bv)$ denote the geodesic line in $Y$ through $y_0$ in the direction $\bv$. Then
$$\gamma(\bv)=\{K\exp(t\bv)\Gamma:t\in\RR\}.$$
Let us now prove Theorem \ref{B1}.

\begin{proof}[Proof of Theorem \ref{B1}]
Without loss of generality, we assume $y=y_0$. We need to prove that the set
\begin{equation}\label{E:geo-1}
\{\bv\in\Lp_1:\overline{\gamma(\bv)}\cap Z=\varnothing\}
\end{equation}
is thick in $\Lp_1$. Note that $\Ad(K)\Lp_1=\Lp_1$.  We first prove that for every $\bv\in\Lp_1$, the set
\begin{equation}\label{E:geo-2}
\left\{\bw\in\Ad(K^\circ)\bv:\overline{\gamma(\bw)}\cap Z=\varnothing\right\}
\end{equation}
is HAW on $\Ad(K^\circ)\bv$. To do this, let $x_0$ denote the point $\Gamma$ in $X:=G/\Gamma$, and consider the surjective map
$$q:X\to Y, \qquad q(gx_0)=Kg\Gamma.$$
Let $F=\{g_t:t\in\RR\}$, where $g_t=\exp(t\bv)$. We claim that the set \eqref{E:geo-2} is the image of the set
\begin{equation}\label{E:geo-3}
\big\{k\in K^\circ:kx_0\in E\big(F,q^{-1}(Z)\big)\big\}
\end{equation}
under the submersion $K^\circ\to\Ad(K^\circ)\bv$, $k\mt(\Ad k^{-1})\bv$. In fact, for $k\in K^\circ$ and $t\in\RR$, we have $$K\exp\big(t(\Ad k^{-1})\bv\big)\Gamma=Kg_tk\Gamma=q(g_tkx_0).$$
So $\gamma\big((\Ad k^{-1})\bv\big)=q(Fkx_0)$.
Since the map $q$ has compact fibers, it is a closed map. It follows that
$\overline{\gamma\big((\Ad k^{-1})\bv\big)}=q\big(\overline{Fkx_0}\big)$.
Thus, $\overline{\gamma\big((\Ad k^{-1})\bv\big)}\cap Z=\varnothing$ if and only if $\overline{Fkx_0}\cap q^{-1}(Z)=\varnothing$, that is, $kx_0\in E\big(F,q^{-1}(Z)\big)$. This verifies the claim. Since $q^{-1}(Z)$ is a countable union $K$-orbits in $X$ distinct from $Kx_0$, it follows from Proposition \ref{P:ss}(1) that the set \eqref{E:geo-3} is HAW on $K^\circ$. Then, by Lemma \ref{L:haw-proj}, the set \eqref{E:geo-2} is HAW on $\Ad(K^\circ)\bv$.
\vskip 5pt

To complete the proof, let us choose a maximal abelian subspace $\La\subset\Lp$ and an (open) Weyl chamber $\La^+\subset\La$. Let $M=Z_{K^\circ}(\La)$, $\La^+_1=\La^+\cap\Lp_1$. Then the map
$$\Phi:K^\circ/M\times\La^+_1\to\Lp_1, \qquad \Phi(kM,\bv)=(\Ad k)\bv$$
is a diffeomorphism onto on open dense subset of $\Lp_1$. The HAW property of the set \eqref{E:geo-2} implies that for each $\bv\in\La^+_1$, the intersection of the set \eqref{E:geo-1} with $\Phi(K^\circ/M\times\{\bv\})$ is thick in $\Phi(K^\circ/M\times\{\bv\})$. By Marstrand slicing theorem (see, for example, \cite[Lemma 1.4]{KM}), the intersection of \eqref{E:geo-1} with $\im \Phi$ is thick in $\im \Phi$, hence is also thick in $\Lp_1$. This proves Theorem \ref{B1}.
\end{proof}

Before proving Theorem \ref{B2}, let us recall more facts concerning the geodesic flow on the unit tangent bundle $S(Y)$ (see, for example, \cite{Mau,KM2}). We keep the notation as in the beginning of this subsection, and consider the natural $G$-action on $S(\tilde{Y})$.
We refer to a connected component of the image of a $G$-orbit in $S(\tilde{Y})$ under the covering map $S(\tilde{Y})\to S(Y)$ as an \textsl{ergodic submanifold} of $S(Y)$. Each ergodic submanifold is a closed submanifold of $S(Y)$ and is invariant under the geodesic flow. Note that every $G^\circ$-orbit in $S(\tilde{Y})$ meets $\Lp_1\cong S_{\tilde{y}_0}(\tilde{Y})$. The stabilizer of a vector $\bv\in\Lp_1$ in $G$ is equal to its centralizer $K_\bv$ in $K$. So the $G$-orbit of $\bv$ in $S(\tilde{Y})$ can be identified with $K_\bv\backslash G$, and its projection in $S(Y)$ can be identified with $K_\bv\backslash\ggm$. Under the latter identification, the restriction of the geodesic flow on $K_\bv\backslash\ggm$ is given by
$$\gamma_t(K_\bv g\Gamma)=K_\bv\exp(t\bv)g\Gamma, \qquad g\in G.$$
Let $\E_\bv\subset S(Y)$ denote the corresponding ergodic submanifold, namely,
$$\E_\bv:=\{K_\bv g\Gamma:g\in G^\circ\}.$$
Then every ergodic submanifold is of the form $\E_\bv$ for some $\bv\in\Lp_1$.

\begin{proof}[Proof of Theorem \ref{B2}]
Assume $\E=\E_\bv$, where $\bv\in\Lp_1$. We keep the notation as in the proof of Theorem \ref{B1}. Then the surjective map
$$q':X\to K_\bv\backslash\ggm, \qquad q'(gx_0)=K_\bv g\Gamma$$
intertwines the flow $(X,F)$ and the geodesic flow on ${K_\bv\backslash\ggm}$. Each fiber of $q'$ is a $K_\bv$-orbit in $X$.
Note that $\dim K_\bv<\dim K$. By Proposition \ref{P:ss}(2), there is an $F$-invariant closed subset $X'\subset X$ that does not intersect $q'^{-1}(\Xi)$ but intersects every $K^\circ$-orbit in $X$. It follows that the closed subset $q'(X')\cap\E$ of $\E$ is invariant under the geodesic flow and does not intersect $\Xi$. Moreover, the projection of $q'(X')\cap\E$ to $Y$ contains $q(X'\cap G^\circ x_0)$, which is the whole space $Y$. This completes the proof of Theorem \ref{B2}.
\end{proof}

\begin{remark}
Similar to (in fact, simpler than) the proofs of Proposition \ref{P:ss}(1) and Theorem \ref{B1}, it can be shown that for every ergodic submanifold $\E\subset S(Y)$, if $Z\subset\E$ is a countable subset, then the set $\{\xi\in \E: \overline{\gamma(\xi)}\cap Z=\emptyset\}$ is HAW on $\E$.
\end{remark}

\section{Gaps between values of functions at integer points}\label{forms}

\subsection{The general set-up} \label{setup}

Let $n\ge2$ be an integer, and let $C(\RR^n)$ denote the space of real-valued continuous functions on $\RR^n$.
For $\phi\in C(\RR^n)$ we will be studying the values of $\phi$ at  nonzero integer points $\ZZ^n_{\ne0}:=\ZZ^n\smz$.
It is an important question in number theory to know that for $\phi$ as above, whether
$\phi(\ZZ^n_{\ne0})$ is dense in its image $\phi(\RR^n)$, or perhaps it has a gap at a real number $a\in\RR$. Here, we say that $\phi(\ZZ^n_{\ne0})$ {\sl has a gap at $a$} if $\phi(\ZZ^n_{\ne0})\cap(a-\varepsilon,a+\varepsilon)=\emptyset$ for some $\varepsilon>0$. Clearly when $a \ne \phi(0)$ it is equivalent to $\phi(\ZZ^n)\cap(a-\varepsilon,a+\varepsilon)=\emptyset$ for some $\varepsilon>0$.

If $\phi$ is a linear form, it is easy to see that $\phi(\ZZ^n)$ is not dense in $\RR$ if and only if $\phi$ is a multiple of a rational form. The famous Oppenheim conjecture, proved by Margulis \cite{Ma-Op1,Ma-Op2}, states that the same statement holds if $\phi$ is a nondegenerate indefinite quadratic form and $n\ge3$. It follows that in both cases, if $\phi(\ZZ^n_{\ne0})$ has a gap at some number $a$ then $\phi$ is a multiple of a rational form. Moreover, a conjecture from Margulis \cite[Conjecture 8]{Ma00} (see also Cassels and Swinnerton-Dyer \cite[Hypothesis A]{CaSD}) states that if $n\ge3$ and $\phi$ is the product of $n$ linearly independent linear forms such that $\phi(\ZZ^n_{\ne0})$ has a gap at $0$, then $\phi$ is a multiple of a rational polynomial. Although this conjecture remains open, it has been proved by Einsiedler, Katok and Lindenstrauss \cite[Theorem 1.6]{EKL} that in the space of products of $n$ linearly independent linear forms, the set of polynomials $\phi$ with $\phi(\ZZ^n_{\ne0})$ having a gap at $0$ has the same Hausdorff dimension as the set of multiples of rational polynomials, namely $1$.

The situation is completely different when $n=2$: It is proved by Kleinbock and Weiss \cite{KW3} that given any countable subset $A$ of $\RR$, the set of $\phi$ in the space of nondegenerate indefinite binary quadratic forms (or equivalently, products of two linearly independent linear forms) such that $\phi(\ZZ^2_{\ne0})$ has a gap at every $a\in A$ is thick in this space. In this section, we use Theorem \ref{A2} to extend the last result.
\smallskip

To begin with, let us introduce some notation. Let $\widehat{C}(\RR^n)$ denote the set of $\phi\in C(\RR^n)$ such that $\phi(\ZZ^n)$ is not dense in $\phi(\RR^n)$, that is,
$$\widehat{C}(\RR^n):=\left\{\phi\in C(\RR^n):\phi(\RR^n)\not\subset\overline{\phi(\ZZ^n)}\right\}.$$
For $a\in\RR$, let $\widehat{C}_a(\RR^n)$ denote the set of $\phi\in C(\RR^n)$ such that $\phi(\ZZ^n_{\ne0})$ has a gap at $a$, that is,
$$\widehat{C}_a(\RR^n):=\left\{\phi\in C(\RR^n):a\notin \overline{\phi(\ZZ^n_{\ne0})}\right\}.$$
It is easy to see that $\phi\in \widehat{C}(\RR^n)$ if and only if $\phi\in\bigcup_{a\in \phi(\RR^n)}\widehat{C}_a(\RR^n)$.
We would like to understand the sets $\widehat{C}(\RR^n)$ and $\widehat{C}_a(\RR^n)$. However, they are too large to be addressed. To proceed, consider the natural right action of $\SL_n(\RR)$ on $C(\RR^n)$, which is given by
$$\SL_n(\RR)\times C(\RR^n)\to C(\RR^n), \qquad (g,\phi)\mt \phi\circ g.$$
Here $\RR^n$ is understood as the space of column vectors, and $g\in\SL_n(\RR)$ is identified with the left multiplication by $g$ on $\RR^n$. For $\phi\in C(\RR^n)$, recall the definition \equ{orbit} of the $\SL_n(\RR)$-orbit $\OO(\phi)\cong \Aut(\phi)\backslash\SL_n(\RR)$ of $\phi$, where
$\Aut(\phi)\subset\SL_n(\RR)$ is the stabilizer of $\phi$ in $\SL_n(\RR)$.  $\Aut(\phi)$ is a closed subgroup\footnote{It is easy to see from the Taylor expansion that if $\phi$ is real analytic, then $\Aut(\phi)$ is algebraic. However, even if $\phi$ is smooth, $\Aut(\phi)$ may fail to be algebraic. For example, the function $\phi$ on $\RR^3$ given by
$$\phi(x,y,z)=\begin{cases}
(2xyz+y^2z\log|z|)\exp(-1/y^2|z|), & yz\ne0;\\
0, & yz=0
\end{cases}$$
is smooth, but
$\Aut(\phi)=\left\{\begin{pmatrix}
 \pm e^t & \pm te^t & 0\\
  0 & \pm e^t & 0\\
  0 & 0 & e^{-2t}
\end{pmatrix}:t\in\RR\right\}$ is not algebraic.} of $\SL_n(\RR)$, and hence $\OO(\phi)$ has a natural smooth manifold structure. Let
$$\OOO(\phi):=\OO(\phi)\cap \widehat{C}(\RR^n), \qquad \OOO_a(\phi):=\OO(\phi)\cap \widehat{C}_a(\RR^n), \qquad a\in\RR.$$
Then
$$\OOO(\phi)=\bigcup_{a\in \phi(\RR^n)}\OOO_a(\phi).$$
More generally, for a subset $A$ of $\RR$, denote
$$\OOO_A(\phi)=\bigcap_{a\in A}\OOO_a(\phi)=\left\{\psi\in \OO(\phi):\overline{\psi(\ZZ^n_{\ne0})}\cap A=\emptyset\right\}.$$
Our aim is to understand $\OOO(\phi)$ and $\OOO_A(\phi)$ as subsets of the manifold $\OO(\phi)$.

\smallskip
First, let us observe the following fact.

\begin{proposition}\label{prop-zero-measure}
If $\Aut(\phi)$ is noncompact, then $\OOO(\phi)$ has measure zero (with respect to any smooth measure on $\OO(\phi)$).
\end{proposition}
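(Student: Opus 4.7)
The plan is to lift the problem to the space $X := \SL_n(\RR)/\SL_n(\ZZ)$ of unimodular lattices and apply the Howe-Moore theorem to the noncompact subgroup $\Aut(\phi) \subset \SL_n(\RR)$. First I would reduce: the natural map $\pi \colon \SL_n(\RR) \to \OO(\phi)$, $g \mapsto \phi \circ g$, is a submersion whose fibers are the left cosets of $\Aut(\phi)$, so by Fubini on local trivializations a set $S \subset \OO(\phi)$ has zero smooth measure if and only if $\pi^{-1}(S)$ has zero (right) Haar measure in $\SL_n(\RR)$. Since $\phi(g\gamma\ZZ^n) = \phi(g\ZZ^n)$ for every $\gamma \in \SL_n(\ZZ)$, the preimage $\pi^{-1}(\OOO(\phi))$ is right $\SL_n(\ZZ)$-invariant and descends to a subset $Z \subset X$; by a further Fubini step it suffices to prove that $Z$ has measure zero with respect to the $\SL_n(\RR)$-invariant probability measure on $X$.

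Next I would decompose $Z$ into a countable union of $\Aut(\phi)$-invariant pieces. The condition $\phi \circ g \in \widehat{C}(\RR^n)$ means precisely that $\phi(g\ZZ^n)$ misses some nonempty open interval contained in $\phi(\RR^n)$; approximating by rational intervals gives
$$Z \;=\; \bigcup_{\substack{(p,q) \,\in\, \QQ \times \QQ_{>0} \\ (p-q,\,p+q) \,\cap\, \phi(\RR^n) \,\ne\, \varnothing}} W_{p,q}, \qquad W_{p,q} := \{g\SL_n(\ZZ) \in X : \phi(g\ZZ^n) \cap (p-q,p+q) = \varnothing\}.$$
Each $W_{p,q}$ is closed, and it is invariant under left translation by $\Aut(\phi)$ because $h \in \Aut(\phi)$ forces $\phi(hg\bx) = \phi(g\bx)$ for every $\bx \in \RR^n$, and hence $\phi(hg\ZZ^n) = \phi(g\ZZ^n)$.

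For each $W_{p,q}$ I would then invoke the Howe-Moore theorem: since $\SL_n(\RR)$ is a simple Lie group with finite center and $\SL_n(\ZZ)$ is a lattice in it, every closed noncompact subgroup acts mixingly, and in particular ergodically, on $X$ with respect to the Haar probability measure. Applied to $\Aut(\phi)$, this forces $W_{p,q}$ to have measure $0$ or $1$. To rule out measure $1$, I would choose $\br \in \RR^n$ with $\phi(\br) \in (p-q,p+q)$ (which exists because the interval meets $\phi(\RR^n)$), use continuity of $\phi$ to pick an open neighborhood $U \subset \RR^n \smz$ of $\br$ with $\phi(U) \subset (p-q,p+q)$, and then observe that the orbit map $g \mapsto g\be_1$ is a surjective submersion $\SL_n(\RR) \to \RR^n \smz$, so $V := \{g \in \SL_n(\RR) : g\be_1 \in U\}$ is a nonempty open subset of $\SL_n(\RR)$ whose image in $X$ is open, has positive measure, and is disjoint from $W_{p,q}$. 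Hence $W_{p,q}$ has measure zero, and summing over the countable union so does $Z$.

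The main obstacle I anticipate is ensuring that Howe-Moore applies to $\Aut(\phi)$ in full generality, since $\Aut(\phi)$ need not be connected or algebraic (as the footnote in the paper emphasizes). The standard formulation of Howe-Moore for matrix coefficients on $L^2_0(X)$ only requires a sequence going to infinity in $\SL_n(\RR)$, so closedness plus noncompactness of $\Aut(\phi)$ suffices; but this is the step that should be stated with care. Everything else in the argument is a routine combination of Fubini, rational approximation, and transversality of the orbit map $g \mapsto g\be_1$.
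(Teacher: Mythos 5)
Your proof is correct, and it follows the same overall strategy as the paper: reduce via Fubini to showing that the relevant subset of $\SL_n(\RR)$ (or of $X_n=\SL_n(\RR)/\SL_n(\ZZ)$) has Haar measure zero, and then exploit Moore's ergodicity theorem (equivalently, Howe--Moore vanishing of matrix coefficients) for the closed noncompact subgroup $\Aut(\phi)$ acting on $X_n$. The place where you diverge is in how ergodicity is cashed in. The paper invokes, as a black box, the standard fact that an ergodic action on a second countable space has a.e.\ dense orbits; it then observes that if $\Aut(\phi)g\SL_n(\ZZ)$ is dense in $\SL_n(\RR)$, then $\overline{\phi\circ g(\ZZ^n)}\supset\phi(\SL_n(\RR)\ZZ^n)=\phi\circ g(\RR^n)$, so $\phi\circ g\notin\OOO(\phi)$. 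You instead decompose the descended set into a countable union of $\Aut(\phi)$-invariant sets $W_{p,q}$ indexed by rational intervals, apply the $0$--$1$ law, and rule out measure one for each $W_{p,q}$ by producing a nonempty open set of lattices not in $W_{p,q}$ via the orbit map $g\mapsto g\be_1$. In effect you have unwound and re-proved, in this specific setting, exactly the lemma (``ergodic implies a.e.\ dense orbits'') that the paper cites, since that lemma is proved precisely by taking a countable basis of open sets and applying the $0$--$1$ law to the invariant sets of points whose orbit misses a given basic open set. So the logical content is the same; your version is more self-contained (it avoids citing that lemma and spells out the positive-measure open set explicitly), while the paper's is shorter. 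Your caveat about Howe--Moore applying to possibly disconnected, non-algebraic $\Aut(\phi)$ is well taken and is handled correctly: the matrix-coefficient form of the theorem only needs a sequence in $\Aut(\phi)$ leaving every compact subset of $\SL_n(\RR)$.
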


\begin{proof}
Let \eq{rho}{\rho:\SL_n(\RR)\to\OO(\phi), \qquad g\mt \phi\circ g} be the natural projection. It suffices to prove that the set
\begin{equation}\label{zero-measure}
\rho^{-1}\big(\OOO(\phi)\big)=\left\{g\in\SL_n(\RR):\phi\circ g\in\OOO(\phi)\right\}
\end{equation}
has measure zero with respect to the Haar measure on $\SL_n(\RR)$. Since the group $\Aut(\phi)$ is noncompact, it follows from Moore's Ergodicity Theorem that the $\Aut(\phi)$-action on $\SL_n(\RR)/\SL_n(\ZZ)$ is ergodic. Hence, almost every point in $\SL_n(\RR)/\SL_n(\ZZ)$ has a dense $\Aut(\phi)$-orbit. This implies that for almost every $g\in\SL_n(\RR)$, the set $\Aut(\phi)g\SL_n(\ZZ)$ is dense in $\SL_n(\RR)$. For such a $g$, we have
\begin{align*}
\overline{\phi\circ g(\ZZ^n)}&=\overline{\phi(\Aut(\phi)g\SL_n(\ZZ)\ZZ^n)}\\
&\supset \phi\left(\overline{\Aut(\phi)g\SL_n(\ZZ)}\ZZ^n\right)\\
&=\phi\big(\SL_n(\RR)\ZZ^n\big)\\
&=\phi\circ g(\RR^n),
\end{align*}
that is, $\phi\circ g\notin\OOO(\phi)$. Hence the set \eqref{zero-measure} has measure zero in $\SL_n(\RR)$. This completes the proof.
\end{proof}

In view of Proposition \ref{prop-zero-measure}, it is natural to ask what is the Hausdorff dimension of $\OOO(\phi)$ or $\OOO_A(\phi)$. Let us first review the cases mentioned in the beginning of this section.
\begin{itemize}
\item[(1)] If $\phi\in(\RR^n)^*\smz$, then $\OO(\phi)=(\RR^n)^*\smz$, $\Aut(\phi)$ is conjugate to the group of matrices in $\SL_n(\RR)$ with $(1,0,\ldots,0)$ as the first row, and $\OOO(\phi)$ consists of nonzero multiples of rational linear forms.
  \item[(2)] If $n=p+q\ge3$, where $p,q\ge1$, and $\phi$ is a quadratic form of signature $(p,q)$, then $\OO(\phi)$ is the space of all such forms with the same determinant as $\phi$, $\Aut(\phi)$ is conjugate to $\mathrm{SO}(p,q)$, and by the Oppenheim conjecture (Margulis' theorem), the set $\OOO(\phi)$ consists of forms in $\OO(\phi)$ that are multiples of rational forms. It follows that
      $$\dim  \OOO_a(\phi)=\dim  \OOO(\phi)=0, \qquad \forall \, a\in\RR.$$
  \item[(3)] If $n\ge3$ and $\phi$ is the product of $n$ linearly independent linear forms, then $\OO(\phi)$ is the space of all such polynomials with the same ``determinant" as $\phi$, and the identity component of $\Aut(\phi)$ is conjugate to the group of positive diagonal matrices in $\SL_n(\RR)$. The above-mentioned conjecture from \cite{Ma00,CaSD} states that every polynomial in $\OOO_0(\phi)$ is a multiple of a rational polynomial; it is proved in \cite{EKL} that $$\dim \OOO_0(\phi)=0.$$
  \item[(4)] If $n=2$ and $\phi$ is a nondegenerate indefinite binary quadratic form (or equivalently, the product of two linearly independent linear forms), then $\OO(\phi)$ is the space of all such forms with the same determinant as $\phi$, and the identity component of $\Aut(\phi)$ is a one-parameter diagonalizable subgroup of $\SL_2(\RR)$. It is proved in \cite{KW3} that for any countable subset $A$ of $\RR$, the set $\OOO_A(\phi)$ is thick in $\OO(\phi)$. In particular, we have $$\dim  \OOO_A(\phi)=\dim\OO(\phi)=2.$$
\end{itemize}

\subsection{A sufficient condition for the winning property of $\OOO_A(\phi)$} \label{suffcond}

In this section, we prove a general theorem which extends Case (4) above.
Let $F = \{g_t : t\in\RR\}$ be a one-parameter subgroup of $\SL_n(\RR)$. Say that $F$ is {\sl non-quasiunipotent} if $\rho(\Ad g_1) > 1$. Since $\SL_n(\RR)$ is unimodular, this is equivalent to $\rho(\Ad g_{-1}) > 1$, and hence to the subgroups $G_{F^+}^{\max}$ and  $G_{F^-}^{\max}$ (the  maximally expanding horospherical subgroups of $G$ relative to $g_1$ and $g_{-1}$ respectively) being nontrivial.

\smallskip
Let $X_n$ denote the space of unimodular lattices in $\RR^n$, which is identified with the space $\SL_n(\RR)/\SL_n(\ZZ)$ in the natural way. Let us first recall the following conjecture\footnote{The conjecture is stated in \cite[Conjecture 7.1]{AGK} for any Lie group $G$ and any lattice $\Gamma\subset G$. In this case, it is proved in \cite{KM} that the set $E(F,\infty)$ is thick if and only if $F$ has the so-called property (Q). If $F$ is Ad-diagonalizable, then it has property (Q).
So in \cite[Conjecture 7.1]{AGK}, $F$ is assumed to be Ad-diagonalizable for simplicity. For $G=\SL_n(\RR)$, any non-quasiunipotent $F$ has property (Q). We prefer to state Conjecture \ref{C:AGK} for any non-quasiunipotent $F$.} from \cite{AGK}:

\begin{conjecture}\label{C:AGK}
Let $F$ be a non-quasiunipotent one-parameter subgroup of $\SL_n(\RR)$. Then the set
$$E(F,\infty):=\{\Lambda\in X_n:F\Lambda \text{ is bounded}\}$$
is HAW in $X_n$.
\end{conjecture}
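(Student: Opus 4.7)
The conjecture falls outside the direct scope of Theorem \ref{A2} because the forbidden set is the cusp of $X_n$ rather than a submanifold. The natural strategy is nonetheless to adapt the hyperplane percentage game of Sections \ref{hyp}--\ref{proofs} to a countable family of cusp regions indexed by primitive integer vectors. Concretely, by Mahler's criterion a lattice $g\ZZ^n$ lies in $E(F,\infty)$ if and only if there exists $\epsilon>0$ with $\|g_t g\mathbf{v}\|\ge\epsilon$ for every $t\in\RR$ and every primitive $\mathbf{v}\in\ZZ^n\smz$. Fixing a small $\epsilon>0$, it suffices to have Alice steer the lattice away from the union over $t\ge0$ and primitive $\mathbf{v}$ of the cusp regions $C_\epsilon(\mathbf{v},t,g):=\{\bu:\|g_t\exp(\bu)g\mathbf{v}\|<\epsilon\}$; the symmetric problem for $t\le0$ can be handled in parallel as in the proof of Proposition \ref{P:ss}(2), by working with both $g_\tau$ and $g_\tau^{-1}$ simultaneously and strengthening \eqref{E:n1} to $\rho^{2^{\ell-1}-1}\beta^\ell\ge C^2$.

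The key technical step is to contain each cusp region in a thin affine hyperplane neighborhood in $\Lg=\mathfrak{sl}_n$. Writing $w:=g_t g\mathbf{v}\in\RR^n$ and $h=\exp(\bu)$, the identity $g_t h g\mathbf{v}=\exp\bigl((\Ad g_t)\bu\bigr)\,w$ shows, after Taylor expansion valid for $\bu$ in the small ball $B_0$ chosen by Bob, that $C_\epsilon(\mathbf{v},t,g)$ is to leading order the set of $\bu$ with $(\Ad g_t)(\bu)\cdot w$ within $O(\epsilon)$ of $-w$. This is exactly the framework of Proposition \ref{P:linear-main} applied to $T=\Ad g_1$, with $\sW$ the compact family of hyperplanes in $\Lg$ obtained by pulling back, via $\bu\mapsto(\Ad g_t)(\bu)\cdot w$, the hyperplane orthogonal to $w/\|w\|$ in $\RR^n$. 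The conclusion is that $C_\epsilon(\mathbf{v},t,g)$ is contained in an affine hyperplane neighborhood in $\Lg$ of width $O(\epsilon\,\|\Ad g_t\|^{-1})$. Partitioning times into scales $\cN_k$ as in \S\ref{proofs}, Alice at stage $k$ announces one such hyperplane neighborhood for every primitive $\mathbf{v}\in\ZZ^n$ that can become $\epsilon$-short for some $t\in\cN_k$ at some lattice in her current ball. A Minkowski-type count bounds the number of relevant $\mathbf{v}$ per scale by a constant depending only on $\epsilon$ and on the starting lattice $g\ZZ^n$; the percentage game analysis of \S\ref{proofs} then yields a winning strategy provided $\ell$ is chosen large enough to absorb this bounded number of moves per stage, with $\epsilon$ in turn chosen so that the hyperplane widths $\epsilon\,\|\Ad g_t\|^{-1}$ for $t\in\cN_k$ are at most $\beta r_{i_k}$.

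The main obstacle I anticipate is uniformity across scales in the combined count-and-width estimate. For $\Ad$-diagonalizable $F$, $\Ad g_t$ has pure exponential eigenvalues and these estimates are classical, recovering the HAW results of \cite{AGK}. For general non-quasiunipotent $F$, the Jordan blocks introduce the polynomial prefactor $n^{s-1}$ of Lemma \ref{L:linear1}, so the effective expansion at each scale is weaker, and the hyperplane neighborhoods produced by Proposition \ref{P:linear-main} depend on a constant $c$ whose uniformity in $\sW$ must be tracked. The asymptotic identity $\la T^n\ra\sim\la T^{n-s+1}p(T)\ra$ of Lemma \ref{L:linear2}, which was the core input for Proposition \ref{P:linear-main}, should suffice for this, but one must also verify that the integer vectors $\mathbf{v}$ relevant at scale $k$ do not concentrate in the directions along which $p(\Ad g_t)$ degenerates. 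This is where property (Q) of $F$ on $X_n$ should play the decisive role, ruling out accumulation of short vectors in the kernel of the maximally expanding projection and allowing the percentage argument to close. Carrying out these lattice-theoretic estimates uniformly across scales is, I believe, the genuine new content of the conjecture.
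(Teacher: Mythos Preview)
The statement you are attempting to prove is labeled a \emph{Conjecture} in the paper, and the paper does not prove it in general. The only case established in the paper is Theorem~\ref{T:conj}, which treats the specific subgroup $F$ of \eqref{E:gt} (equivalently \eqref{E:gt-st}). That proof does not proceed via the hyperplane percentage game or Proposition~\ref{P:linear-main} at all: it goes through Dani's correspondence \cite{Da1} between bounded $F^+$-orbits and badly approximable matrices, invokes the HAW property of $\Bad_{p,q}$ from \cite{BFS}, and then uses the involution $g\Gamma\mapsto (g^{\mathsf T})^{-1}\Gamma$ to handle $F^-$. The paper explicitly lists the remaining known cases ($n=2$, $n=3$ diagonalizable, and the special eigenvalue configurations of \cite{GW}) as results in the literature, not as consequences of its own methods.

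Your proposal is therefore not a proof but a programme, and its decisive step is left open. The sentence ``A Minkowski-type count bounds the number of relevant $\mathbf v$ per scale by a constant depending only on $\epsilon$ and on the starting lattice'' is exactly the assertion that is unknown for general non-quasiunipotent $F$. For the two-eigenvalue flow of \eqref{E:gt-st} such a count is essentially the simplex lemma underlying \cite{BFS}; already for three distinct eigenvalues the analogous bound is the substantial content of \cite{AGK}, and for arbitrary eigenvalue patterns (let alone non-diagonalizable $F$) no such uniform count is available. Moreover, your reduction to Proposition~\ref{P:linear-main} produces hyperplanes in $\Lg$ of width $\asymp\epsilon\,\|\Ad g_t\|^{-1}$, but the family $\sW$ you need depends on the moving vector $w=g_t g\mathbf v$, so the constant $c(T,U,\sW)$ is not a priori uniform over all primitive $\mathbf v$ and all scales; the ``property~(Q)'' heuristic you invoke does not by itself supply this uniformity. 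In short, you have correctly located the difficulty, but what you have written is a restatement of why Conjecture~\ref{C:AGK} is open rather than a resolution of it.
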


Conjecture \ref{C:AGK} is proved for $n=2$ in \cite{KW3}, and for $n=3$ and diagonalizable $F$ in \cite{AGK}. It also follows from a result in \cite{BFS} that the conjecture holds for diagonalizable $F$ such that $g_1$ has only two eigenvalues (see Theorem \ref{T:conj} below). Moreover, the conjecture is proved in \cite{GW} for diagonalizable $F$ such that the eigenvalues $\lambda_1,\ldots,\lambda_n$ of $g_1$ satisfy
$$\#\{i:|\lambda_i|<1\}=1 \qquad \text{ and } \qquad \#\left\{i:|\lambda_i|=\max_{1\le j\le n}|\lambda_j|\right\}\ge n-2.$$
See also \cite{AGGL,Da2,KM,KW1} for other related results.

\smallskip

The main result of this section, which is a generalization of Theorem \ref{C}, is as follows.

\begin{theorem}\label{T:app1-main}
Let $\phi\in C(\RR^n)$. Suppose that $\Aut(\phi)$ has a one-parameter non-quasiunipotent subgroup $F=\{g_t:t\in\RR\}$ satisfying the following conditions:
\begin{itemize}
  \item[(i)] There exists a continuous function $N:\RR\to[0,\infty)$ with $N(0)=0$ such that
$$N\big(\phi(\pv)-\phi(0)\big)\ge\dist(F\pmb{v},0), \qquad \forall \, \pmb{v}\in\RR^n.$$
\item[(ii)] For any real number $a\ne \phi(0)$, the set $\phi^{-1}(a)$ is contained in a countable union of $F$-invariant $C^1$ submanifolds of $\RR^n$ that are both $G_{F^+}^{\max}$-transversal and $G_{F^-}^{\max}$-transversal\footnote{Here the transversality is understood in the sense of the linear action of $\SL_n(\RR)$ on $\RR^n\smz$.}.
\item[(iii)] Conjecture \ref{C:AGK} holds for $F$.
\end{itemize}
Then, for any countable subset $A$ of $\RR$, the set $\OOO_A(\phi)$ is HAW on $\OO(\phi)$.
\end{theorem}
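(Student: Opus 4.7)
The plan is to lift the problem to $\SL_n(\RR)$ via the quotient submersion $\rho\colon \SL_n(\RR)\to \Aut(\phi)\backslash\SL_n(\RR)=\OO(\phi)$, $g\mapsto \phi\circ g$. By Lemma \ref{L:haw-proj} together with the stability of HAW under supersets and countable intersections, it suffices to produce, for each $a\in A$, a HAW set $S_a\subset \SL_n(\RR)$ with $\phi\circ h\in\OOO_a(\phi)$ whenever $h\in S_a$. Writing $\pi\colon \SL_n(\RR)\to X_n$ for the quotient by $\SL_n(\ZZ)$, assumption (iii) together with \cite[Proposition~6.1]{HKS} (preimages of HAW sets under surjective submersions are HAW) imply that $T:=\pi^{-1}(E(F,\infty))$ is HAW, and for each $h\in T$ Mahler's criterion supplies a uniform lower bound $\delta_h>0$ on lengths of nonzero vectors of every lattice in the precompact orbit $F\cdot\pi(h)\subset X_n$. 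For $a=\phi(0)$ this bound already suffices: $\dist(Fh\bn,0)\ge\delta_h$ for every $\bn\in\ZZ^n_{\ne0}$, so condition (i) and continuity of $N$ at $0$ force $\phi(h\bn)$ to stay uniformly bounded away from $\phi(0)$, giving $\rho(T)\subset \OOO_{\phi(0)}(\phi)$.

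For the remaining case $a\ne\phi(0)$, I use condition (ii) to write $\phi^{-1}(a)\subset\bigcup_k Z_k$. For each $c\in \NN$, the rescaled submanifold $\tfrac{1}{c}Z_k$ remains $F$-invariant and $G_{F^{\pm}}^{\max}$-transversal (by linearity of the $\SL_n(\RR)$-action on $\RR^n$), and I cover each such submanifold by countably many compact codimension-one $F$-transversal slices $\Sigma$. The set $\widetilde\Sigma:=\{h\SL_n(\ZZ)\in X_n: h\be_1\in\Sigma\}$ decomposes into countably many compact $C^1$ submanifolds of $X_n$, each of which, via the local submersion $\Lambda\mapsto h\be_1$, inherits $(F,G_{F^{\pm}}^{\max})$-transversality from that of $\Sigma$. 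Theorem \ref{A2} applied in both the $F^+$- and $F^-$-directions then declares $E(F^+,\widetilde\Sigma)$ and $E(F^-,\widetilde\Sigma)$ HAW in $X_n$, so their $\pi$-preimages are HAW in $\SL_n(\RR)$. Define
\[
S_a \;:=\; T\;\cap\;\bigcap_\Sigma \pi^{-1}\!\bigl(E(F^+,\widetilde\Sigma)\cap E(F^-,\widetilde\Sigma)\bigr),
\]
a countable intersection of HAW sets and hence HAW.

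To verify $\rho(S_a)\subset \OOO_a(\phi)$, suppose $h\in S_a$ while $\phi(h\bn_i)\to a$ for some $\bn_i\in \ZZ^n_{\ne0}$. Decompose $\bn_i=c_i\bn_i'$ with $\bn_i'$ primitive; condition (i) applied to $\bn_i$ yields the bounded quantity $c_i\cdot\dist(Fh\bn_i',0)=\dist(Fh\bn_i,0)\le N(\phi(h\bn_i)-\phi(0))$, so if $c_i\to\infty$ along a subsequence then $\dist(Fh\bn_i',0)\to 0$ would produce arbitrarily short nonzero vectors in lattices of $F\cdot\pi(h)$, contradicting $h\in T$. Passing to a subsequence, $c_i\equiv c$, and (i) again permits a choice of $t_i$ making $\bv_i:=g_{t_i}h\bn_i$ bounded. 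Along a subsequence $\bv_i\to\bv\in \phi^{-1}(a)\cap Z_{k_0}$ with $\bv\ne 0$ (since $\phi(0)\ne a$), while precompactness from $h\in T$ supplies $\Lambda^\ast\in X_n$ with $g_{t_i}h\SL_n(\ZZ)\to\Lambda^\ast$ and $\bv\in\Lambda^\ast$. A short Mahler argument shows that primitivity of $g_{t_i}h\bn_i'=\bv_i/c$ in each lattice persists in the limit, so $\bv':=\bv/c$ is primitive in $\Lambda^\ast$ and lies in $\tfrac{1}{c}Z_{k_0}$; writing $\bv'=g_s\bw'$ with $\bw'$ in an appropriate slice $\Sigma\subset\tfrac{1}{c}Z_{k_0}$ then gives $\Lambda':=g_{-s}\Lambda^\ast\in\widetilde\Sigma$. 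Substituting $u_i:=t_i-s$, whichever of the three alternatives $u_i\to+\infty$, $u_i\to-\infty$, or $u_i$ bounded occurs places $\Lambda'$ in $\overline{F^+\pi(h)}$, $\overline{F^-\pi(h)}$, or $F\pi(h)$ respectively, each contradicting $\pi(h)\in E(F^+,\widetilde\Sigma)\cap E(F^-,\widetilde\Sigma)$.

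The main obstacle I expect is reconciling the $F$-invariance of the $Z_k$'s (which propagates to their lifts $\widetilde Z_k$ in $X_n$ and so prevents direct application of Theorem \ref{A2}) with the $F$-transversality hypothesis of that theorem. Cutting $\widetilde Z_k$ into codimension-one $F$-transversal slices $\widetilde\Sigma$ and simultaneously exploiting both the $F^+$- and $F^-$-exclusions resolves this; the enumeration over rescalings $\tfrac{1}{c}Z_k$ reconciles arbitrary nonzero integer vectors with primitive lattice vectors; and assumption (iii) plays a dual role, both supplying the Mahler compactness needed to extract the limit $\Lambda^\ast$ and ruling out $c_i\to\infty$.
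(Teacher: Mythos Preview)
Your proposal is correct and takes essentially the same approach as the paper. The paper organizes the argument via an intermediate dual statement on $X_n$ (Theorem~\ref{T:app1-dual}), Lemma~\ref{L:relation}, and Proposition~\ref{P:app0}/Corollary~\ref{C:app1}, whereas you inline all of these steps; but the ingredients---lifting via $\rho$ and Lemma~\ref{L:haw-proj}, Mahler compactness from $E(F,\infty)$ for the case $a=\phi(0)$, rescaling by $1/c$ to reduce to primitive vectors, pulling the submanifolds back along the first-column map and slicing transversally to $F$, applying Theorem~\ref{A2} in both time directions, and the compactness/contradiction argument---are identical.
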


Note that condition (i) in Theorem \ref{T:app1-main} is independent of the choice of the norm $\|\cdot\|$ on $\RR^n$ that is used to define $\dist(F\pv,0):=\inf_{t\in\RR}\|g_t\pv\|$.

\smallskip

We first deduce Theorem \ref{T:app1-main} from the following dual statement.

\begin{theorem}\label{T:app1-dual}
Let $\phi\in C(\RR^n)$ be such that $\Aut(\phi)$ has a one-parameter non-quasiunipotent subgroup $F$ satisfying conditions {\rm (i)--(iii)} in Theorem \ref{T:app1-main}. Then for every $a\in\RR$, the set
$$X(\phi,a):=\left\{\Lambda\in X_n:a\notin\overline{\phi(\Lambda\smz)}\right\}$$
is HAW on $X_n$.
\end{theorem}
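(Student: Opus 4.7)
The plan is to split on whether $a=\phi(0)$ and in each case realize $X(\phi,a)$ as a superset of a countable intersection of HAW subsets of $X_n$, using that HAW is preserved under supersets and countable intersections. If $a=\phi(0)$, I show directly that $E(F,\infty)\subset X(\phi,\phi(0))$: when $F\Lambda$ is bounded, Mahler's compactness criterion gives $\varepsilon>0$ with $\|g_t\pv\|\ge\varepsilon$ for all $t\in\RR$ and $\pv\in\Lambda\smz$, so $\dist(F\pv,0)\ge\varepsilon$. Condition (i) then forces $N(\phi(\pv)-\phi(0))\ge\varepsilon$, and the continuity of $N$ at $0$ yields $\delta>0$ with $|\phi(\pv)-\phi(0)|\ge\delta$, whence $\phi(0)\notin\overline{\phi(\Lambda\smz)}$. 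Since $E(F,\infty)$ is HAW by (iii), so is $X(\phi,\phi(0))$.

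If $a\ne\phi(0)$, I invoke (ii) to write $\phi^{-1}(a)\subset\bigcup_k Z_k$ with each $Z_k$ an $F$-invariant, $G_{F^\pm}^{\max}$-transversal $C^1$ submanifold of $\RR^n\smz$. Covering each $Z_k$ by local $F$-transversal slices and reindexing, I may assume $Z_k=F\cdot W_k$ for a compact $F$-transversal $C^1$ slice $W_k$; to absorb the possible non-homogeneity of $\phi$, set $W_{k,m}:=\tfrac{1}{m}W_k$ for $m\in\NN$ (still $F$-transversal and $G_{F^\pm}^{\max}$-transversal by linearity of the actions). For each primitive $\pmb{u}\in\ZZ^n$ I define
\[
\widetilde W_{k,m,\pmb{u}}:=\pi\bigl(\{g\in\SL_n(\RR):g\pmb{u}\in W_{k,m}\}\bigr)\subset X_n,
\]
where $\pi:\SL_n(\RR)\to X_n$ is the projection. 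Since $g\mapsto g\pmb{u}$ is a submersion for $\pmb{u}\ne 0$ and $\pi$ is a local diffeomorphism, $\widetilde W_{k,m,\pmb{u}}$ is locally a $C^1$ submanifold of $X_n$. At $\Lambda_0=\pi(g_0)$ with $\pv_0=g_0\pmb{u}\in W_{k,m}$, the usual identification $T_{\Lambda_0}X_n\cong\Lg$ gives $T_{\Lambda_0}\widetilde W_{k,m,\pmb{u}}\cong\{X\in\Lg:X\pv_0\in T_{\pv_0}W_{k,m}\}$; a short computation then translates the $F$-transversality of $W_{k,m}$ and the $G_{F^+}^{\max}$-transversality of $Z_k/m$ at $\pv_0$ into the $F$-transversality and $(F,G_{F^+}^{\max})$-transversality of $\widetilde W_{k,m,\pmb{u}}$ at $\Lambda_0$. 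Theorem~\ref{A2} then yields that $E(F^+,\widetilde W_{k,m,\pmb{u}})$ is HAW in $X_n$; the symmetric argument with $G_{F^-}^{\max}$ in place of $G_{F^+}^{\max}$ yields HAW of $E(F^-,\widetilde W_{k,m,\pmb{u}})$.

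I form the HAW set
\[
S:=E(F,\infty)\cap\bigcap_{k,\,m,\,\pmb{u}}\Bigl(E(F^+,\widetilde W_{k,m,\pmb{u}})\cap E(F^-,\widetilde W_{k,m,\pmb{u}})\Bigr),
\]
and show $S\subset X(\phi,a)$. If $\Lambda\in S$ but $\pv_n\in\Lambda\smz$ satisfy $\phi(\pv_n)\to a$, condition (i) bounds $\dist(F\pv_n,0)$ from above by a constant, while $\Lambda\in E(F,\infty)$ with Mahler gives $\|g_t\pv\|\ge\varepsilon$ uniformly in $t\in\RR$ and $\pv\in\Lambda\smz$. Writing $\pv_n=m_n\pv'_n$ with $\pv'_n$ primitive in $\Lambda$, the lower bound applied to $\pv'_n$ forces $m_n$ bounded, hence constant along a subsequence, $m_n=m$. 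Choosing $t_n$ so $\|g_{t_n}\pv'_n\|$ is nearly minimal and extracting subsequences, $g_{t_n}\pv'_n\to\pw_\infty$, and after a further bounded $F$-shift placing $\pw_\infty$ in a slice we have $\pw_\infty\in W_{k_0,m}$ for some $k_0$ with $\phi(m\pw_\infty)=a$. Simultaneously $g_{t_n}\Lambda\to\Lambda_\infty$ in $X_n$, and a discreteness argument on the integer coordinates shows $\pw_\infty$ is a primitive vector of $\Lambda_\infty$ corresponding to some primitive $\pmb{u}_\infty\in\ZZ^n$, so $\Lambda_\infty\in\widetilde W_{k_0,m,\pmb{u}_\infty}$. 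Depending on whether $t_n\to+\infty$, $t_n\to-\infty$, or $t_n$ remains bounded, this accumulation contradicts membership of $\Lambda$ in $E(F^+,\widetilde W_{k_0,m,\pmb{u}_\infty})$ or in $E(F^-,\widetilde W_{k_0,m,\pmb{u}_\infty})$.

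The main obstacles are: (a) the local nature of $\widetilde W_{k,m,\pmb{u}}$ as a $C^1$ submanifold of $X_n$: globally it may self-intersect, so one decomposes it into countably many genuine $C^1$ pieces and intersects the resulting HAW sets, which is harmless for HAW; and (b) the discrete control in the final contradiction, that is, bounding the integer $m_n$ and forcing the primitive $\pmb{u}_n$ to stabilize along a subsequence. The inclusion of $E(F,\infty)$ in $S$ is essential for both aspects of (b): the uniform lower bound $\|g_t\pv'\|\ge\varepsilon$ for primitive $\pv'\in\Lambda$ bounds $m_n$ via $\|g_t\pv_n\|=m_n\|g_t\pv'_n\|$ and simultaneously keeps $g_{t_n}\Lambda$ in a compact region of $X_n$, so that the associated integer vectors $\pmb{u}_n$ remain in a bounded set and become eventually constant.
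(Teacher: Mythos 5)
Your proof follows essentially the same route as the paper's, which factors the argument into Lemma~\ref{L:relation} (the dynamical reduction via condition (i)), Proposition~\ref{P:app0} (transferring transversality from $\RR^n$ to $X_n$ and invoking Theorem~\ref{A2}), and Corollary~\ref{C:app1}; your single combined argument replays each of these steps. Two remarks. First, the index $\pmb{u}$ is redundant: since $\pi$ is right-$\SL_n(\ZZ)$-invariant and $\SL_n(\ZZ)$ acts transitively on primitive integer vectors, the set $\widetilde W_{k,m,\pmb{u}}$ is independent of $\pmb{u}$; in the paper's notation it is exactly $Z_{W_{k,m}}$, the set of lattices possessing a primitive vector in $W_{k,m}$. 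Consequently the ``stabilize $\pmb{u}_n$'' part of your obstacle (b) is a no-op, although it does no harm. Second, and more substantively: you slice the $F$-invariant submanifold $Z_k\subset\RR^n$ as $Z_k=F\cdot W_k$ with $W_k$ an $F$-transversal slice, whereas the paper (in Proposition~\ref{P:app0}) slices $p_1^{-1}(M)\subset\SL_n(\RR)$. The paper's choice is not merely cosmetic: since left translation by $F$ is free on $\SL_n(\RR)$, the $F$-orbits in $p_1^{-1}(M)$ are always one-dimensional, so an $F$-transversal codimension-one slicing always exists. In $\RR^n$, by contrast, $F$ may fix nonzero vectors (whenever the infinitesimal generator has nontrivial kernel), and near such a fixed vector no $F$-transversal slice of $Z_k$ exists, so the decomposition $Z_k=FW_k$ can fail. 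This is harmless for GIBFs (where the generator in \eqref{E:gt} is invertible), but it is a real gap at the level of generality of Theorem~\ref{T:app1-main}. Your obstacle (a), the self-intersection of $Z_M$ in $X_n$, is precisely what the paper's construction of the family $\{Z'_i\}$ with $\pi|_{U_i}$ a diffeomorphism resolves; you should carry out that construction rather than leave it as an acknowledged obstacle.
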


\begin{proof}[Proof of Theorem \ref{T:app1-main} assuming Theorem \ref{T:app1-dual}]
Note that $\OOO_A(\phi)$ is the image of the set
\begin{equation}\label{group-HAW-1}
\{g\in\SL_n(\RR):\phi\circ g\in\OOO_A(\phi)\}
\end{equation}
under the projection $\rho$ as in \equ{rho}.
By Lemma \ref{L:haw-proj}, it suffices to show that the set \eqref{group-HAW-1} is HAW on $\SL_n(\RR)$. Let \eq{pi}{\pi:\SL_n(\RR)\to X_n, \qquad \pi(g)=g\ZZ^n} be the natural projection. The set \eqref{group-HAW-1} is equal to
$$\left\{g\in\SL_n(\RR):\overline{\phi(g\ZZ^n_{\ne0})}\cap A=\emptyset\right\}=\bigcap_{a\in A}\pi^{-1}\big(X(\phi,a)\big).$$
Assuming Theorem \ref{T:app1-dual}, each $X(\phi,a)$ is HAW on $X_n$. Thus, the set \eqref{group-HAW-1} is HAW on $\SL_n(\RR)$, and hence $\OOO_A(\phi)$ is HAW on $\OO(\phi)$.
\end{proof}

In order to prove Theorem \ref{T:app1-dual}, let us introduce the following notation. For $\phi\in C(\RR^n)$ and $a\in\RR$, denote
$$Z_{\phi,a}=\big\{\Lambda\in X_n:a\in \phi\big(\Lambda\smz\big)\big\}.$$
Then let us prove the following lemma, which relates gaps in $\phi(\Lambda\smz)$ to dynamical properties of the orbit $F\Lambda$.

\begin{lemma}\label{L:relation}
Let $\phi\in C(\RR^n)$, and let $F$ be a one-parameter subgroup of $\Aut(\phi)$ satisfying condition {\rm (i)} in Theorem \ref{T:app1-main}. Then
\begin{itemize}
  \item[(1)] $E(F,\infty)\subset X\big(\phi,\phi(0)\big)$.
  \item[(2)] For any $a\ne \phi(0)$, we have
  $$E(F,Z_{\phi,a})\cap E(F,\infty)\subset X(\phi,a)\subset E(F,Z_{\phi,a}).$$
\end{itemize}
\end{lemma}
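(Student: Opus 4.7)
The plan is to prove the three set inclusions using Mahler's compactness criterion in $X_n$ together with condition (i), which relates the quantity $\dist(F\pv,0)$ in $\RR^n$ to the $\phi$-value of $\pv$. Starting with part (1): if $\Lambda \in E(F,\infty)$ then $\overline{F\Lambda}$ is compact in $X_n$, so Mahler's criterion supplies $\varepsilon > 0$ with $\|g_t\pv\| \ge \varepsilon$ for every $t \in \RR$ and $\pv \in \Lambda\smz$; equivalently $\dist(F\pv,0) \ge \varepsilon$ on $\Lambda\smz$. Condition (i) then gives $N\bigl(\phi(\pv)-\phi(0)\bigr) \ge \varepsilon$ on the same set, and using continuity of $N$ at $0$ together with $N(0)=0$ I would pick a neighborhood $U$ of $0$ in $\RR$ on which $N < \varepsilon$; then $\phi(\Lambda\smz)$ is disjoint from $\phi(0)+U$, giving $\Lambda \in X(\phi,\phi(0))$.

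For the inclusion $X(\phi,a) \subset E(F,Z_{\phi,a})$ in part (2), I would argue by contrapositive. Suppose $\Lambda' \in \overline{F\Lambda} \cap Z_{\phi,a}$, say $g_{t_k}\Lambda \to \Lambda'$, and fix $\pv' \in \Lambda'\smz$ with $\phi(\pv') = a$. By the standard description of convergence in $X_n$, there exist $\bw_k \in \Lambda\smz$ with $g_{t_k}\bw_k \to \pv'$. Since $g_{t_k} \in \Aut(\phi)$ and $\phi$ is continuous, $\phi(\bw_k) = \phi(g_{t_k}\bw_k) \to a$, forcing $a \in \overline{\phi(\Lambda\smz)}$ and contradicting $\Lambda \in X(\phi,a)$.

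The remaining inclusion $E(F,Z_{\phi,a}) \cap E(F,\infty) \subset X(\phi,a)$ I would again prove by contrapositive: assume $\Lambda \in E(F,\infty)$ and $a \in \overline{\phi(\Lambda\smz)}$, and exhibit a point in $\overline{F\Lambda} \cap Z_{\phi,a}$. Choose $\bw_k \in \Lambda\smz$ with $\phi(\bw_k) \to a$; since $N$ is continuous, the values $N\bigl(\phi(\bw_k)-\phi(0)\bigr)$ are bounded above by some constant $M$, so by condition (i) there exist $t_k \in \RR$ with $\|g_{t_k}\bw_k\| \le M+1$. Meanwhile, Mahler's criterion applied to the compact set $\overline{F\Lambda}$ gives $\varepsilon > 0$ with $\|g_{t_k}\bw_k\| \ge \varepsilon$. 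Passing to a subsequence, $g_{t_k}\Lambda \to \Lambda' \in \overline{F\Lambda}$ and $g_{t_k}\bw_k \to \pv'$ with $\varepsilon \le \|\pv'\| \le M+1$ and $\phi(\pv') = \lim\phi(\bw_k) = a$, so $\Lambda' \in \overline{F\Lambda} \cap Z_{\phi,a}$, the desired contradiction.

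The only step going beyond bookkeeping is the extraction in the last paragraph: one needs the upper bound on $\|g_{t_k}\bw_k\|$ supplied by condition (i) together with the lower bound supplied by Mahler's criterion in order to produce a nonzero limit vector lying in a lattice in $\overline{F\Lambda}$ whose $\phi$-value equals $a$.
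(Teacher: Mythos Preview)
Your proof is correct and follows essentially the same approach as the paper's: Mahler's criterion plus condition (i) to control the size of $g_t$-translates of lattice vectors, and convergence in $X_n$ to pass to limits. The only cosmetic differences are that you argue part (1) directly rather than by contrapositive, and that in the last inclusion you use Mahler's lower bound to ensure the limit vector is nonzero whereas the paper uses $\phi(\pv')=a\ne\phi(0)$; both are equally valid.
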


\begin{proof}
(1) It suffices to prove that if $\Lambda\in X_n$ and $\phi(0)\in\overline{\phi(\Lambda\smz)}$ then $F\Lambda$ is unbounded. Let $\pmb{v}_k\in \Lambda\smz$ be such that $\phi(\pmb{v}_k)\to \phi(0)$. Then, condition (i) in Theorem \ref{T:app1-main} implies that
$$\dist(F\pmb{v}_k,0)\le N\big(\phi(\pmb{v}_k)-\phi(0)\big)\to0.$$
Hence there are $t_k\in\RR$ such that $g_{t_k}\pmb{v}_k\to0$. It then follows from Mahler's criterion that the sequence $g_{t_k}\Lambda$ in $X_n$ is unbounded. Hence $F\Lambda$ is unbounded.

(2) Suppose to the contrary that the first inclusion does not hold. Then there exists $\Lambda\in X_n$ such that $F\Lambda$ is bounded, $\overline{F\Lambda}\cap Z_{\phi,a}=\varnothing$, but $a\in\overline{\phi(\Lambda\smz)}$. Let $\pmb{v}_k\in \Lambda\smz$ be such that $\phi(\pmb{v}_k)\to a$. Since
$$\dist(F\pmb{v}_k,0)\le N\big(\phi(\pmb{v}_k)-\phi(0)\big)\to N\big(a-\phi(0)\big),$$
there exist $t_k\in\RR$ such that $g_{t_k}\pmb{v}_k$ is a bounded sequence in $\RR^n$. Note that the sequence $g_{t_k}\Lambda$ in $X_n$ is also bounded. By passing to subsequences, we may assume that $g_{t_k}\pmb{v}_k\to \pmb{v}\in\RR^n$ and $g_{t_k}\Lambda\to\Delta\in X_n$. It follows that $\pmb{v}\in\Delta$ and
$$\phi(\pmb{v})=\lim_{k\to\infty}\phi(g_{t_k}\pmb{v}_k)=\lim_{k\to\infty}\phi(\pmb{v}_k)=a.$$
Together with the fact that $a\ne \phi(0)$, this also implies that $\pmb{v}\ne0$. So $\Delta\in \overline{F\Lambda}\cap Z_{\phi,a}$, a contradiction.

To prove the second inclusion, it suffices to show that if $\Lambda\in X_n$ and $\overline{F\Lambda}\cap Z_{\phi,a}\ne\varnothing$ then $a\in\overline{\phi(\Lambda\smz)}$. Let $t_k\in\RR$ and $\Delta\in Z_{\phi,a}$ be such that $g_{t_k}\Lambda\to\Delta$, and let $\pmb{v}\in\Delta\smz$ be such that $\phi(\pmb{v})=a$. Then there exist $\pmb{v}_k\in \Lambda\smz$ such that $g_{t_k}\pmb{v}_k\to \pmb{v}$. It follows that
$$\phi(\pmb{v}_k)=\phi(g_{t_k}\pmb{v}_k)\to \phi(\pmb{v})=a.$$
Hence $a\in\overline{\phi(\Lambda\smz)}$.
\end{proof}

In view of Lemma \ref{L:relation}, to prove Theorem \ref{T:app1-dual} we need only to show that $E(F,Z_{\phi,a})$ is HAW for every $a\ne \phi(0)$.
For a subset $M$ of $\RR^n$, denote
$$Z_M:=\{\Lambda\in X_n:P(\Lambda)\cap M\ne\emptyset\},$$
where
$$P(\Lambda):=\{\pmb{v}\in\Lambda:\pmb{v}/k\notin\Lambda \text{ for every integer } k\ge2\}$$
is the set of primitive vectors in $\Lambda$. We first use Theorem \ref{A2} to prove:

\begin{proposition}\label{P:app0}
Let $F$ be a one-parameter non-quasiunipotent subgroup of $\SL_n(\RR)$, and let $M$ be an $F$-invariant $C^1$ submanifold  of $\RR^n$ which is both $G_{F^+}^{\max}$-transversal and $G_{F^-}^{\max}$-transversal. Then $E(F,Z_M)$ is HAW on $X_n$.
\end{proposition}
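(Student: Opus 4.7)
The strategy is to decompose $Z_M$ as a countable union of $C^1$ submanifolds of $X_n$ to each of which Theorem~\ref{A1} can be applied, and then to reduce the continuous-flow statement to its one-step discrete counterpart. For each primitive vector $\pmb{p}\in\ZZ^n$ set
$$W_{\pmb{p}}:=\{g\in\SL_n(\RR):g\pmb{p}\in M\}, \qquad Z_M^{\pmb{p}}:=\pi(W_{\pmb{p}}),$$
where $\pi$ is the projection of \equ{pi}. Every primitive vector of a lattice $g\ZZ^n$ has the form $g\pmb{p}$ for some primitive $\pmb{p}\in\ZZ^n$, so $Z_M=\bigcup_{\pmb{p}}Z_M^{\pmb{p}}$, a countable union. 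Since $g\mapsto g\pmb{p}$ is a surjective submersion $\SL_n(\RR)\to\RR^n\smz$, $W_{\pmb{p}}$ is a $C^1$ submanifold of $\SL_n(\RR)$; as $\pi$ is a local diffeomorphism, $Z_M^{\pmb{p}}$ is locally a $C^1$ submanifold of $X_n$, which I would write as a countable union of compact embedded pieces to satisfy the hypothesis of Theorem~\ref{A1}. By the stability of HAW under countable intersections, it suffices to prove that $E(F,Z_M^{\pmb{p}})$ is HAW for a single primitive $\pmb{p}$.

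The next step is to transfer the hypothesized transversality of $M$ to $Z_M^{\pmb{p}}$. Identifying $T_\Lambda X_n$ with $\Lg$ via $X\mapsto\DDT\exp(tX)\Lambda$, a short computation at $\Lambda=g\SL_n(\ZZ)$ with $g\pmb{p}\in M$ gives
$$T_\Lambda Z_M^{\pmb{p}}=\{X\in\Lg:X(g\pmb{p})\in T_{g\pmb{p}}M\}, \qquad T_\Lambda(G_{F^{\pm}}^{\max}\Lambda)=\Lg_{F^{\pm}}^{\max}.$$
Consequently $T_\Lambda(G_{F^{\pm}}^{\max}\Lambda)\not\subset T_\Lambda Z_M^{\pmb{p}}$ is equivalent to $\Lg_{F^{\pm}}^{\max}(g\pmb{p})\not\subset T_{g\pmb{p}}M$, which is exactly the $G_{F^{\pm}}^{\max}$-transversality of $M$ at the point $g\pmb{p}\in M$---an assumption of the proposition.

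Finally, the $F$-invariance of $M$ forces that of $Z_M$ (and of the countable family $\{Z_M^{\pmb{p}}\}$ as a whole). For any $\tau>0$ this gives
$$E(F^{\pm},Z_M)=E(g_{\pm\tau},Z_M),$$
the nontrivial direction being: any accumulation $g_{t_k}\Lambda\to z\in Z_M$ with $t_k\to\pm\infty$ yields, after writing $t_k=n_k\tau+s_k$ with $s_k\in[0,\tau)$ and passing to a subsequence with $s_k\to s$, the accumulation $g_{n_k\tau}\Lambda\to g_{-s}z$, which lies in $Z_M$ by $F$-invariance. Since $G_{g_{\pm\tau}}^{\max}=G_{F^{\pm}}^{\max}$, Theorem~\ref{A1} applied with $f=g_{\pm\tau}$ and $Z=Z_M^{\pmb{p}}$ yields that $E(g_{\pm\tau},Z_M^{\pmb{p}})$ is HAW; intersecting over the countably many $\pmb{p}$ and then over the two signs gives that $E(F,Z_M)=E(F^+,Z_M)\cap E(F^-,Z_M)$ is HAW. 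The main technical point will be the careful identification of tangent spaces in the second step---once that is in place, the rest is a clean bookkeeping argument reducing to Theorem~\ref{A1}.
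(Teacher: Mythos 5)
There is a genuine gap in the first step, and it sits exactly where the paper has to do real work. Your purported decomposition $Z_M=\bigcup_{\pmb{p}}Z_M^{\pmb{p}}$ with $Z_M^{\pmb{p}}:=\pi(W_{\pmb{p}})$ does not decompose anything: for every primitive $\pmb{p}\in\ZZ^n$ one has $Z_M^{\pmb{p}}=Z_M$. Indeed $\pi(W_{\pmb{p}})=\pi\big(W_{\pmb{p}}\,\SL_n(\ZZ)\big)$, and since $W_{\pmb{p}}\gamma=W_{\gamma^{-1}\pmb{p}}$ for $\gamma\in\SL_n(\ZZ)$, the $\SL_n(\ZZ)$-saturation of $W_{\pmb{p}}$ is $\bigcup_{\pmb{q}\text{ primitive}}W_{\pmb{q}}$, which projects to all of $Z_M$. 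So each $Z_M^{\pmb{p}}$ is the full, possibly self-intersecting, set $Z_M$ (cf.\ the remark after the proposition in the paper: if $M$ meets $P(\Lambda)$ in two linearly independent vectors then $\Lambda$ is a self-intersection point of $Z_M$), and you cannot apply Theorem~\ref{A1} to it as a ``$C^1$ submanifold.'' The parenthetical ``which I would write as a countable union of compact embedded pieces'' is precisely the missing content: the nontrivial part of the proof is to produce, inside $p_1^{-1}(M)\subset\SL_n(\RR)$, a countable family of pieces on which $\pi$ is injective (the paper's condition that $\pi|_{U_i}$ be a diffeomorphism), so that the projected pieces are genuine embedded submanifolds of $X_n$. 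Labelling by $\pmb{p}$ does not accomplish this, because the nonuniqueness is governed by the right $\SL_n(\ZZ)$-action, not by the choice of $\pmb{p}$.

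The remainder of your plan is sound and would carry through once the decomposition is done upstairs in $\SL_n(\RR)$ correctly: the tangent-space identity $T_\Lambda Z=\{X\in\Lg:X(g\pmb{p})\in T_{g\pmb{p}}M\}$ is valid branchwise, the $F$-invariance of $M$ gives $E(F^{\pm},Z_M)=E(g_{\pm\tau},Z_M)$ exactly as you argue, and the $G_{F^{\pm}}^{\max}$-transversality of the branch is equivalent to that of $M$. Note, however, that once you replace $Z_M$ by compact embedded pieces $\pi(Y_j)$, those pieces are no longer $F$-invariant, so the identity $E(F^{\pm},\cdot)=E(g_{\pm\tau},\cdot)$ only holds for the whole $Z_M$; you then pass to $E(g_{\pm\tau},Z_M)=\bigcap_j E\big(g_{\pm\tau},\pi(Y_j)\big)$ and apply Theorem~\ref{A1} to each piece. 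This route through Theorem~\ref{A1} is genuinely different from the paper's, which instead writes $p_1^{-1}(M)=\bigcup_i FZ'_i$ with $F$-transversal slices $Z'_i$ and applies Theorem~\ref{A2} directly to $Z_i=\pi(Z'_i)$; both are viable, but in either case the countable family has to be constructed so that $\pi$ is injective on each member, and that is the step you need to supply.
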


\begin{proof}
Let $\pi$ be as in \equ{pi}, and let $p_1:\SL_n(\RR)\to\RR^n$ be the map that sends a matrix to its first column. It is easy to see that $Z_M=\pi\big(p_1^{-1}(M)\big)$. Since $p_1$ is a submersion and is $F$-equivariant with respect to the left multiplications on $\SL_n(\RR)$ and $\RR^n$, the set $p_1^{-1}(M)$ is a left $F$-invariant $C^1$ submanifold of $\SL_n(\RR)$. Thus, we can select a countable family $\{Z'_i:i\in\NN\}$ of codimension one $C^1$ submanifolds of $p_1^{-1}(M)$ such that
\begin{itemize}
  \item For any $i\in\NN$ and $g\in Z'_i$, we have $T_g(Fg)\not\subset T_gZ'_i$;
  \item $p_1^{-1}(M)=\bigcup_{i\in\NN}FZ'_i$;
  \item For any $i\in\NN$, there exists an open subset $U_i$ of $\SL_n(\RR)$ containing $Z'_i$ such that $\pi|_{U_i}$ is a diffeomorphism onto $\pi(U_i)$.
\end{itemize}
Let $Z_i=\pi(Z'_i)$. Then $\{Z_i:i\in\NN\}$ is a family of $C^1$ submanifolds of $X_n$, and
$$Z_M=\pi\big(p_1^{-1}(M)\big)=\bigcup_{i\in\NN}\pi(FZ'_i)=\bigcup_{i\in\NN}FZ_i.$$
It follows that
$$E(F,Z_M)=\bigcap_{i\in\NN}E(F,FZ_i)=\bigcap_{i\in\NN}E(F,Z_i).$$
Thus, in view of Theorem \ref{A2}, it suffices to show that each $Z_i$ is $\big(F,G_{F^+}^{\max}\big)$-transversal and $\big(F,G_{F^-}^{\max}\big)$-transversal.

Let $i\in\NN$, $\Lambda\in Z_i$, and let $g\in Z'_i$ be such that $\Lambda=\pi(g)$. Then
$$T_\Lambda(F\Lambda)=(d\pi)_g\big(T_g(Fg)\big)\not\subset (d\pi)_g(T_gZ'_i)=T_\Lambda Z_i.$$
On the other hand, for $H\in\{G_{F^+}^{\max},G_{F^-}^{\max}\}$ we have $T_g(Hg)\not\subset T_g\big(p_1^{-1}(M)\big)$,
for otherwise, if $T_g(Hg)\subset T_g\big(p_1^{-1}(M)\big)$, and if we let $\pmb{v}=p_1(g)$, then
$$T_v(H\pmb{v})=(dp_1)_g\big(T_g(Hg)\big)\subset (dp_1)_g\left(T_g\big(p_1^{-1}(M)\big)\right)=T_{\pmb{v}}M,$$
contrary to the assumption that $M$ is $H$-transversal. Thus,
\begin{align*}
T_\Lambda(H\Lambda)&=(d\pi)_g\big(T_g(Hg)\big)\\
&\not\subset (d\pi)_g\left(T_g\big(p_1^{-1}(M)\big)\right)\\
&=(d\pi)_g\big(T_gZ'_i\oplus T_g(Fg)\big)\\
&=(d\pi)_g(T_gZ'_i)\oplus\big(d\pi)_g(T_g(Fg)\big)\\
&=T_\Lambda Z_i\oplus T_\Lambda(F\Lambda).
\end{align*}
This proves that each $Z_i$ is $\big(F,G_{F^+}^{\max}\big)$-transversal and $\big(F,G_{F^-}^{\max}\big)$-transversal, hence completes the proof of the proposition.
\end{proof}

\begin{remark}
Even if $M$ is a nice submanifold of $\RR^n$, the set $Z_M$ may fail to be a submanifold of $X_n$. In fact, if $\dim M<n$ and $M$ contains at least two linearly independent vectors in $P(\Lambda)$, then $\Lambda$ is a self-intersection point of $Z_M$.
\end{remark}

We now derive the HAW property of $E(F,Z_{\phi,a})$ from the above proposition:

\begin{corollary}\label{C:app1}
Let $\phi\in C(\RR^n)$, and let $F$ be a one-parameter non-quasiunipotent subgroup of $\Aut(\phi)$ satisfying condition {\rm (ii)} in Theorem \ref{T:app1-main}. Then for any $a\ne \phi(0)$, the set $E(F,Z_{\phi,a})$ is HAW on $X_n$.
\end{corollary}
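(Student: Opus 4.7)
The plan is to reduce Corollary \ref{C:app1} to Proposition \ref{P:app0} by exploiting the countable decomposition provided by condition (ii) together with the primitive-vector structure of $Z_{\phi,a}$.

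First, applying condition (ii) to the fixed $a\ne\phi(0)$, I would fix a countable family $\{N_i\}_{i\in\NN}$ of $F$-invariant $C^1$ submanifolds of $\RR^n$, each both $G_{F^+}^{\max}$-transversal and $G_{F^-}^{\max}$-transversal, with $\phi^{-1}(a)\subset\bigcup_i N_i$. For each pair $(i,k)$ with $k\in\ZZ\smz$, I would consider the dilate $\tfrac1k N_i$. Because $F\subset\SL_n(\RR)$ acts linearly on $\RR^n$, scalar multiplication by $1/k$ commutes with the action of $F$ and with the action of $G_{F^\pm}^{\max}$; the tangent space to $\tfrac1k N_i$ at $\tfrac1k \bv$ is $\tfrac1k T_\bv N_i$, and the tangent space to the $G_{F^\pm}^{\max}$-orbit at $\tfrac1k\bv$ is $\tfrac1k T_\bv(G_{F^\pm}^{\max}\bv)$. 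Consequently each $\tfrac1k N_i$ is again an $F$-invariant $C^1$ submanifold of $\RR^n$ that is $G_{F^+}^{\max}$-transversal and $G_{F^-}^{\max}$-transversal.

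Next I would show the inclusion
$$Z_{\phi,a}\subset\bigcup_{i\in\NN,\,k\in\ZZ\smz}Z_{\frac1k N_i}.$$
Given $\Lambda\in Z_{\phi,a}$, pick $\bv\in\Lambda\smz$ with $\phi(\bv)=a$, and write $\bv=k\bu$ with $\bu\in P(\Lambda)$ and $k\in\ZZ\smz$. Then $k\bu\in\phi^{-1}(a)\subset\bigcup_i N_i$, so $\bu\in\tfrac1k N_i$ for some $i$, whence $\Lambda\in Z_{\frac1k N_i}$.

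Finally, Proposition \ref{P:app0} applies to each $\tfrac1k N_i$ and yields that $E(F,Z_{\frac1k N_i})$ is HAW on $X_n$. Since a countable intersection of HAW sets is HAW,
$$\bigcap_{i,k} E(F,Z_{\frac1k N_i})=E\Big(F,\bigcup_{i,k}Z_{\frac1k N_i}\Big)$$
is HAW on $X_n$. By the inclusion established above, this set is contained in $E(F,Z_{\phi,a})$, and because a winning Alice strategy for the smaller target remains winning for any larger target, HAW is upward closed under set inclusion, so $E(F,Z_{\phi,a})$ itself is HAW on $X_n$. There is essentially no substantial obstacle here: the heavy lifting was already done in Proposition \ref{P:app0}, and the present corollary is a bookkeeping reduction. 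The only minor point to verify carefully is that dilation by $1/k$ preserves both the $F$-invariance and the $G_{F^\pm}^{\max}$-transversality of the $N_i$'s, which follows immediately from the linearity of the actions involved.
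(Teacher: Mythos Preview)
Your proposal is correct and follows essentially the same route as the paper's proof: decompose $\phi^{-1}(a)$ into countably many $F$-invariant transversal submanifolds, use the primitive-vector decomposition $\Lambda\smz=\bigcup_k kP(\Lambda)$ to cover $Z_{\phi,a}$ by the sets $Z_{\frac1k N_i}$, apply Proposition \ref{P:app0} to each dilate, and conclude via the countable-intersection property of HAW sets. The only cosmetic difference is that the paper indexes $k$ over $\NN$ (which suffices since $P(\Lambda)=-P(\Lambda)$), whereas you allow $k\in\ZZ\smz$; this is harmless redundancy.
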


\begin{proof}
Suppose that $\phi^{-1}(a)\subset\bigcup_{i\in\NN}M_i$, where each $M_i$ is an $F$-invariant $C^1$ submanifold of $\RR^n$ and is both $G_{F^+}^{\max}$-transversal and $G_{F^-}^{\max}$-transversal. Since $\Lambda\smz=\bigcup_{k\in\NN}kP(\Lambda)$, we have
\begin{align*}
Z_{\phi,a} & =\{\Lambda\in X_n:(\Lambda\smz)\cap \phi^{-1}(a)\ne\emptyset\}\\
& \subset\bigcup_{k,i\in\NN}\{\Lambda\in X_n:kP(\Lambda)\cap M_i\ne\emptyset\}\\
& =\bigcup_{k,i\in\NN}Z_{\frac1kM_i}.
\end{align*}
This implies that
$$E(F,Z_{\phi,a})\supset\bigcap_{k,i\in\NN}E(F,Z_{\frac1kM_i}).$$
Note that each $\frac1kM_i$ is an $F$-invariant $C^1$ submanifold of $\RR^n$ and is both $G_{F^+}^{\max}$-transversal and $G_{F^-}^{\max}$-transversal. Thus, it follows from Proposition \ref{P:app0} that each $E(F,Z_{\frac1kM_i})$ is HAW. Hence $E(F,Z_{\phi,a})$ is HAW.
\end{proof}

It is now straightforward to derive Theorem \ref{T:app1-dual} from Lemma \ref{L:relation} and Corollary \ref{C:app1}.

\begin{proof}[Proof of Theorem \ref{T:app1-dual}]
If $a=\phi(0)$, then by Lemma \ref{L:relation}(1), the set $X(\phi,a)$ contains $E(F,\infty)$, which is HAW by the assumption. If $a\ne \phi(0)$, then by Lemma \ref{L:relation}(2), the set $X(\phi,a)$ contains $E(F,\infty)\cap E(F,Z_{\phi,a})$, which is HAW by the assumption and Corollary \ref{C:app1}.
\end{proof}

\section{Applications to GIBFs}\label{gibfs}

\subsection{Proof of Theorem \ref{C}}

Recall that in \S\ref{gaps} we defined  a continuous function $\phi: \RR^n \rightarrow \RR$ to be a generalized indefinite binary form, abbreviated (GIBF) if there exists a nontrivial decomposition $\RR^n=U\oplus W$ such that  conditions (IB-1), (IB-2) and (IB-3) hold. Throughout this section, let $\RR^n=U\oplus W$ be such a nontrivial decomposition, let $G = \SL_n(\RR)$, and let $F=\{g_t:t\in\RR\}\subset G$ be as in \eqref{E:gt}. For $\pmb{v}\in\RR^n$, we always let $\pmb{u}$ and $\pmb{w}$ denote the unique vectors with $\pmb{u}\in U$ and $\pmb{w}\in W$ such that $\pmb{v}=\pmb{u}+\pmb{w}$. As a sample case of the decomposition, one can take
\begin{equation}\label{E:sample}
U=\RR\pmb{e}_1 \oplus\cdots \oplus\RR\pmb{e}_p, \qquad W=\RR\pmb{e}_{p+1} \oplus\cdots \oplus\RR\pmb{e}_n,
\end{equation}
where $\{\pmb{e}_1,\dots,\pmb{e}_n\}$ is the standard basis of $\RR^n$.
In this case   \eqref{E:gt} reduces to \eqref{E:gt-st}, see Example \ref{example0}.

\smallskip

First, let us observe the following facts.

\begin{lemma}\label{L:cor}
\begin{itemize}
  \item[(1)] For any $\pmb{v}\in \RR^n$, we have $\dist(F\pmb{v},0)\le 2\|\pmb{u}\|^{p/n}\|\pmb{w}\|^{q/n}$.
\item[(2)] Suppose $\pmb{v}\in \RR^n\sm(U\cup W)$. Then under the natural identification $T_{\pmb{v}}\RR^n\cong\RR^n$, we have $T_{\pmb{v}}(G_{F^+}^{\max}\pmb{v})=U$, $T_{\pmb{v}}(G_{F^-}^{\max}\pmb{v})=W$.
\item[(3)] Suppose $\phi\in C(\RR^n)$ satisfies conditions {\rm (IB-1)} and {\rm (IB-2)}. Then $\phi^{-1}(0)=U\cup W$.
\end{itemize}
\end{lemma}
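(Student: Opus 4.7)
My plan is to handle the three parts separately; they share the common ingredient of the explicit action of $g_t$ on the decomposition $\pmb{v} = \pmb{u} + \pmb{w}$, and parts (1) and (2) feed into (3).

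For part (1), the key observation is that $g_t\pmb{v} = e^{t/p}\pmb{u} + e^{-t/q}\pmb{w}$, so by the triangle inequality $\|g_t\pmb{v}\| \le e^{t/p}\|\pmb{u}\| + e^{-t/q}\|\pmb{w}\|$. When $\pmb{u}, \pmb{w}$ are both nonzero, the identity $\tfrac1p + \tfrac1q = \tfrac{n}{pq}$ lets me solve $e^{t/p}\|\pmb{u}\| = e^{-t/q}\|\pmb{w}\|$ for $t$ explicitly, and at that value each summand equals $\|\pmb{u}\|^{p/n}\|\pmb{w}\|^{q/n}$, yielding the bound. If $\pmb{u} = 0$ or $\pmb{w} = 0$, sending $t \to \mp\infty$ makes $\|g_t\pmb{v}\| \to 0$, so the claim is trivial.

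For part (2), I will reduce to the standard decomposition \eqref{E:sample} by picking a basis adapted to $\RR^n = U \oplus W$; under such a basis, $F$ takes the form \eqref{E:gt-st} of Example \ref{example0}, and \eqref{E:mainexample} gives an explicit description of $\Lg_{F^\pm}^{\max}$. Transported back via the conjugation to the original decomposition, $\Lg_{F^+}^{\max}$ consists of linear maps annihilating $U$ and sending $W$ into $U$; every such $A$ squares to zero, so $\exp(A) = I + A$ and $(I + A)\pmb{v} = \pmb{v} + A\pmb{w}$. Since $\pmb{v} \notin U$ forces $\pmb{w} \ne 0$, the evaluation map $A \mapsto A\pmb{w}$ from $\Lg_{F^+}^{\max}$ onto $U$ is surjective, so $G_{F^+}^{\max}\pmb{v} = \pmb{v} + U$ and its tangent space at $\pmb{v}$ is $U$. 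The $G_{F^-}^{\max}$ case is symmetric, using $\pmb{u} \ne 0$.

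For part (3), I will verify the two inclusions separately. Given $\pmb{v} \in U \cup W$, part (1) produces $t_k$ with $g_{t_k}\pmb{v} \to 0$; by $F$-invariance (IB-1) and continuity of $\phi$, $\phi(\pmb{v}) = \phi(g_{t_k}\pmb{v}) \to \phi(0) = 0$, so $\pmb{v} \in \phi^{-1}(0)$. Conversely, if $\phi(\pmb{v}) = 0$, then (IB-2) yields $\|\pmb{u}\|^p\|\pmb{w}\|^q \le N(0) = 0$, whence $\pmb{u} = 0$ or $\pmb{w} = 0$, i.e.\ $\pmb{v} \in U \cup W$. The only piece requiring real care is the basis reduction in part (2), which is routine once one observes that $\Lg_{F^\pm}^{\max}$ is natural under conjugation by the linear isomorphism making the decomposition standard; nothing else presents a genuine obstacle.
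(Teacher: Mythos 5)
Your proposal is correct and follows essentially the same route as the paper: the equalizing value $t_0$ for part (1), reduction to the standard decomposition \eqref{E:sample} plus the explicit formulas \eqref{E:mainexample} for part (2), and the same two inclusions via (IB-1)/(IB-2) for part (3). The only cosmetic difference is that in (2) you identify the whole orbit $G_{F^+}^{\max}\pmb{v}=\pmb{v}+U$ via $\exp(A)=I+A$ before taking the tangent space, whereas the paper computes $T_{\pmb{v}}(G_{F^+}^{\max}\pmb{v})=\Lg_{F^+}^{\max}\pmb{v}$ directly from the infinitesimal action.
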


\begin{proof}
(1) If $\pv\in U\cup W$, then both sides of the inequality are equal to $0$. Suppose $\pv\notin U\cup W$. Then there exists $t_0\in\RR$ such that $e^{t_0/p}\|\pu\|=e^{-t_0/q}\|\pw\|=\|\pu\|^{p/n}\|\pw\|^{q/n}$. It follows that
$$\dist(F\pv,0)\le \|g_{t_0}\pv\|=\|e^{t_0/p}\pu+e^{-t_0/q}\pw\|\le e^{t_0/p}\|\pu\|+e^{-t_0/q}\|\pw\|=2\|\pu\|^{p/n}\|\pw\|^{q/n}.$$

(2) Without loss of generality, we may assume that $U$ and $W$ are as in \eqref{E:sample}, and $F$ is as in \eqref{E:gt-st}. Then $\Lg_{F^+}^{\max}$ and  $\Lg_{F^-}^{\max}$ are given by \eqref{E:mainexample}.
Write ${\pmb{v}}=\begin{pmatrix}
  {\pmb{u}} \\ {\pmb{w}}
\end{pmatrix}$, where ${\pmb{u}}\in\RR^p$, ${\pmb{w}}\in\RR^q$. Then ${\pmb{u}}$ and ${\pmb{w}}$ are nonzero. It follows that
$$T_{\pmb{v}}(G_{F^+}^{\max}{\pmb{v}})=\Lg_{F^+}^{\max}{\pmb{v}}=\left\{\begin{pmatrix}
  A{\pmb{w}} \\ 0
\end{pmatrix}:A\in\mathrm{M}_{p\times q}(\RR)\right\}=U,$$
$$T_{\pmb{v}}(G_{F^-}^{\max}{\pmb{v}})=\Lg_{F^-}^{\max}{\pmb{v}}=\left\{\begin{pmatrix}
  0 \\ B{\pmb{u}}
\end{pmatrix}:B\in\mathrm{M}_{q\times p}(\RR)\right\}=W.$$
This proves (2).
\smallskip

(3) Suppose ${\pmb{v}}\in U\cup W$. Then there exist $t_k\in\RR$ such that $g_{t_k}{\pmb{v}}\to0$. It follows from condition (IB-1) that $\phi({\pmb{v}})=\phi(g_{t_k}{\pmb{v}})\to \phi(0)=0$. Hence $\phi({\pmb{v}})=0$. Conversely, if $\phi({\pmb{v}})=0$, then condition (IB-2) implies $\|{\pmb{u}}\|^p\|{\pmb{w}}\|^q=0$, which means that ${\pmb{v}}\in U\cup W$.
\end{proof}

Next, let us notice the following result, which can be easily deduced from one of the main results of \cite{BFS}.

\begin{theorem}\label{T:conj}
Conjecture \ref{C:AGK} holds for $F$ as in \eqref{E:gt}.
\end{theorem}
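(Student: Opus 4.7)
The plan is to deduce Theorem \ref{T:conj} from the main result of \cite{BFS}, namely that the set $\Bad(p,q)$ of badly approximable $p \times q$ matrices is HAW on $\Ma_{p \times q}(\RR)$. First I would reduce to the ``standard'' case: choosing $h \in \SL_n(\RR)$ sending $U$ and $W$ onto the standard coordinate subspaces $\mathrm{span}(\pmb{e}_1,\ldots,\pmb{e}_p)$ and $\mathrm{span}(\pmb{e}_{p+1},\ldots,\pmb{e}_n)$ transforms $F$ into $hFh^{-1}$ equal to the diagonal subgroup of \eqref{E:gt-st}. Since $E(hFh^{-1}, \infty) = h \cdot E(F, \infty)$ and left multiplication by $h$ is a smooth diffeomorphism of $X_n$ preserving the HAW property, we may assume throughout that $F$ is given by \eqref{E:gt-st}. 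In this setup $g_1$ has exactly the two eigenvalues $e^{1/p}$ and $e^{-1/q}$, and $G_{F^\pm}^{\max}$ coincide with the standard block unipotent subgroups $U^\pm$ as described in Example \ref{example0}.

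Next, by Dani's correspondence, the map $A \mapsto u_A^+ \ZZ^n$, where $u_A^+ \in U^+$ is the block unipotent matrix with upper-right block $A$, identifies $\Bad(p,q)$ with the horospherical slice $\{A \in \Ma_{p \times q}(\RR) : F^+ u_A^+ \ZZ^n \text{ is bounded}\}$ of the set $E(F^+, \infty) := \{\Lambda \in X_n : F^+\Lambda \text{ is bounded}\}$. The \cite{BFS} argument extends uniformly in the base point, yielding HAW of the slice $\{A : F^+ u_A^+ \Lambda_0 \text{ is bounded}\}$ for every $\Lambda_0 \in X_n$. To upgrade this to HAW of $E(F^+, \infty)$ on $X_n$, observe that $E(F^+, \infty)$ is $U^-$-invariant --- since $g_t u^- g_t^{-1} \to 1_G$ as $t \to +\infty$, the orbit $g_t u^- \Lambda$ stays close to the bounded $g_t \Lambda$ --- and that the two-eigenvalue structure supplies a clean local product decomposition $G = U^+ \cdot (FM_F) \cdot U^-$ near the identity. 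A standard fibered HAW argument, playing Alice's winning strategy on horospherical slices uniformly across the complementary directions, then lifts the slice HAW to a chart of $X_n$, and hence to a global HAW statement for $E(F^+, \infty)$.

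Applying the same argument with $g_t$ replaced by $g_{-t}$ (equivalently, swapping $p \leftrightarrow q$ and $U \leftrightarrow W$) yields HAW of the dual set $E(F^-, \infty) := \{\Lambda \in X_n : F^-\Lambda \text{ is bounded}\}$. Since $E(F, \infty) = E(F^+, \infty) \cap E(F^-, \infty)$ and HAW is closed under finite intersections, $E(F, \infty)$ is HAW on $X_n$, which establishes Conjecture \ref{C:AGK} for this $F$.

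The main obstacle is the fibered HAW step lifting the slice HAW to HAW on $X_n$; this is precisely where the two-eigenvalue hypothesis is indispensable, as it ensures a clean decoupling of the local geometry into pure expanding, contracting, and centralizer/flow directions. Without this, intermediate slow directions appear where $F$ acts at polynomial rates, the product decomposition no longer decouples tractably, and the fiberwise winning strategy fails to patch together into a single global strategy --- which is why Conjecture \ref{C:AGK} remains open beyond the two-eigenvalue case.
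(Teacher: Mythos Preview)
Your proposal is correct and follows essentially the same route as the paper: reduce to the standard diagonal $F$ of \eqref{E:gt-st}, combine Dani's correspondence with the \cite{BFS} result that $\Bad(p,q)$ is HAW to get HAW of $E(F^+,\infty)$ on horospherical slices, lift to HAW on $X_n$ via the fibered/product-structure argument (which the paper packages as ``the method as in the proof of \cite[Theorem 1.2]{AGK}''), and then intersect with $E(F^-,\infty)$. One small difference worth noting: for $E(F^-,\infty)$ the paper does not rerun the argument with $t\mapsto -t$ but instead uses the diffeomorphism $\varphi(g\Gamma)=(g^\T)^{-1}\Gamma$ of $X_n$, observing that $E(F^-,\infty)=\varphi\big(E(F^+,\infty)\big)$, which yields HAW of $E(F^-,\infty)$ in one line.

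A minor quibble with your closing commentary: the fibered lifting step from slice HAW to HAW on $X_n$ is in fact general and does not rely on the two-eigenvalue hypothesis. What is special about the two-eigenvalue case is that the \emph{slice} statement itself---HAW of the relevant set of badly approximable matrices---is exactly the \cite{BFS} theorem; for general $F$ the analogous (weighted) slice HAW is what remains open.
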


\begin{proof}
Without loss of generality, we may assume $F$ is as in \eqref{E:gt-st}. Let $F^+=\{g_t:t\ge0\}$, $F^-=\{g_t:t\le0\}$, and let
$E(F^\pm,\infty)$ be the set of $\Lambda\in X_n$ such that $F^\pm\Lambda$ is bounded. As is well known, bounded $F^+$-orbits in $X_n$ are related to badly approximable matrices. More precisely, it is shown in \cite{Da1} that a matrix $A\in\mathrm{M}_{p\times q}(\RR)$ is badly approximable if and only if the orbit $F^+\begin{pmatrix}
  I_p & A \\ 0 & I_q
\end{pmatrix}\ZZ^n$ is bounded. On the other hand, it is proved in \cite{BFS} that the set of badly approximable matrices is HAW. Starting from these results, and using the method as in the proof of \cite[Theorem 1.2]{AGK}, it is easy to show that $E(F^+,\infty)$ is HAW. Let $\varphi$ be the diffeomorphism of $X_n$ given by $\varphi(g\Gamma)=(g^\T)^{-1}\Gamma$. Then $E(F^-,\infty)=\varphi\big(E(F^+,\infty)\big)$, hence is also HAW. Therefore, $E(F,\infty)=E(F^+,\infty)\cap E(F^-,\infty)$ is HAW.
\end{proof}

It is now straightforward to deduce Theorem \ref{C} from Theorem \ref{T:app1-main}.

\begin{proof}[Proof of Theorem \ref{C}]
Since $\phi$ is a GIBF, the group $F$ given by \eqref{E:gt} is a one-parameter non-quasiunipotent subgroup of $\Aut(\phi)$. It follows from conditions (IB-2), (IB-3) and Lemma \ref{L:cor} that conditions (i) and (ii) in Theorem \ref{T:app1-main} are satisfied.
Moreover, condition (iii) in Theorem \ref{T:app1-main} follows from Theorem \ref{T:conj}. Thus Theorem \ref{T:app1-main} implies the conclusion of Theorem \ref{C}.
\end{proof}

\subsection{Examples}\label{Examples}

In this subsection, we give several interesting examples of GIBFs. Let us first notice the following fact, which will be used to verify condition (IB-3).

\begin{lemma}\label{L:IB3}
Let $\RR^n=U\oplus W$ be a nontrivial decomposition, $F$ be as in \eqref{E:gt}, and $\phi_1,\ldots,\phi_m\in C(\RR^n)$ be finitely many $F$-invariant functions satisfying
\begin{equation}\label{E:homo}
\begin{aligned}
&  \text{$\phi_i(0)=0$, $\phi_i$ is $C^1$ on $\RR^n\sm \phi_i^{-1}(0)$, and} \\
&  \text{$\DT \phi_i(t{\pmb{u}}+{\pmb{w}})\ne0$ for any ${\pmb{v}}\in\RR^n\sm \phi_i^{-1}(0)$.}
\end{aligned}
\end{equation}
Then the function
\begin{equation}\label{E:max}
\phi({\pmb{v}}):=\max_{1\le i\le m}\phi_i({\pmb{v}})
\end{equation}
satisfies {\rm (IB-3)}.
\end{lemma}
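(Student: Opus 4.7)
The plan is to reduce verification of (IB-3) for $\phi$ to the same property for each individual $\phi_i$. The starting point is the trivial inclusion
$$\phi^{-1}(a) \subset \bigcup_{i=1}^m \phi_i^{-1}(a), \qquad a\in\RR,$$
which follows from \eqref{E:max}: if a maximum of finitely many real values equals $a$, then it is attained by at least one $\phi_i$. Thus, assuming $a\ne 0$, it suffices to show that each nonempty level set $Z_i:=\phi_i^{-1}(a)$ is itself an $F$-invariant $C^1$ submanifold of $\RR^n$ which is both $U$-transversal and $W$-transversal; the finite union then supplies the desired (in fact, finite) cover.

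For fixed $i$, I would first observe that $Z_i \subset \RR^n\sm\phi_i^{-1}(0)$, which is where $\phi_i$ is $C^1$ by \eqref{E:homo}. The condition $\DT\phi_i(t\pu+\pw)\ne 0$ is exactly the statement $d\phi_i(\pv)\cdot\pu\ne 0$ at each $\pv=\pu+\pw\in Z_i$ (the curve $t\mapsto t\pu+\pw$ passes through $\pv$ with velocity $\pu$ at $t=1$). This shows $a$ is a regular value of $\phi_i$, so $Z_i$ is a $C^1$ hypersurface with $T_\pv Z_i = \ker d\phi_i(\pv)$, and it simultaneously yields $U$-transversality: since $U$ acts on $\RR^n$ by translations, $T_\pv(\pv+U)=U$ under the natural identification, and $U \not\subset \ker d\phi_i(\pv)$. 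The $F$-invariance of $Z_i$ is automatic from that of $\phi_i$.

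Establishing $W$-transversality is the step requiring a short additional observation, since \eqref{E:homo} only directly controls the $\pu$-direction. Here I would differentiate the $F$-invariance identity $\phi_i(e^{t/p}\pu+e^{-t/q}\pw)=\phi_i(\pv)$ at $t=0$ to obtain the linear relation
$$\frac{1}{p}\, d\phi_i(\pv)\cdot\pu \;=\; \frac{1}{q}\, d\phi_i(\pv)\cdot\pw.$$
Nonvanishing of the left-hand side, already established, then forces nonvanishing of the right, and hence $W \not\subset \ker d\phi_i(\pv) = T_\pv Z_i$.

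The only (mild) obstacle is the degenerate case $\pu=0$ or $\pw=0$, where the hypothesis is vacuous and the differentiation above collapses. I would rule this out at the outset by the same reasoning as in Lemma \ref{L:cor}(3): if $\pv\in U\cup W$ then $g_t\pv\to 0$ as $t\to +\infty$ or $t\to -\infty$, so $F$-invariance and continuity force $\phi_i(\pv)=\phi_i(0)=0$, contradicting $\phi_i(\pv)=a\ne 0$. Consequently every point of $Z_i$ has both components nonzero, the two transversality arguments apply pointwise, and the lemma follows.
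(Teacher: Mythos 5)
Your proof is correct and follows essentially the same route as the paper's: the same reduction $\phi^{-1}(a)\subset\bigcup_i\phi_i^{-1}(a)$, the same identification $\DT\phi_i(t\pu+\pw)=(d\phi_i)_\pv(\pu)$ giving regularity and $U$-transversality, and the same differentiation of $F$-invariance at $t=0$ giving $(d\phi_i)_\pv(\pu/p-\pw/q)=0$ and hence $W$-transversality. Your closing remark about the case $\pu=0$ or $\pw=0$ is harmless but unnecessary: since $(d\phi_i)_\pv(\pu)\ne0$ directly from \eqref{E:homo} and then $(d\phi_i)_\pv(\pw)=\tfrac{q}{p}(d\phi_i)_\pv(\pu)\ne0$, both components are automatically nonzero for any $\pv\in\phi_i^{-1}(a)$ with $a\ne0$.
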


\begin{proof}
Note that $\phi^{-1}(a)\subset\bigcup_{i=1}^m\phi_i^{-1}(a)$. Thus, it suffices to show that if $a\ne0$, then each $\phi_i^{-1}(a)$ is an $F$-invariant $C^1$ submanifold of $\RR^n$ that is both $U$-transversal and $W$-transversal.
Since $\phi_i$ is $F$-invariant, so is the set $\phi_i^{-1}(a)$. It follows from \eqref{E:homo} that $\phi_i$ is a $C^1$ submersion on $\RR^n\sm \phi_i^{-1}(0)$. So $\phi_i^{-1}(a)$ is a $C^1$ submanifold of $\RR^n$, and for ${\pmb{v}}\in\phi_i^{-1}(a)$, we have $T_{\pmb{v}}\big(\phi_i^{-1}(a)\big)=\ker(d\phi_i)_{\pmb{v}}$. By \eqref{E:homo} again, we have
$$(d\phi_i)_{\pmb{v}}({\pmb{u}})=\DT \phi_i(t{\pmb{u}}+{\pmb{w}})\ne0.$$
This means that ${\pmb{u}}\notin \ker(d\phi_i)_{\pmb{v}}$, which implies that $U\not\subset T_{\pmb{v}}\big(\phi_i^{-1}(a)\big)$, that is, $\phi_i^{-1}(a)$ is $U$-transversal. On the other hand, it follows from
$$0=\DDT \phi_i(g_t{\pmb{v}})=(d\phi_i)_{\pmb{v}}(\DDT g_t{\pmb{v}})=(d\phi_i)_{\pmb{v}}({\pmb{u}}/p-{\pmb{w}}/q)$$
that $(d\phi_i)_{\pmb{v}}({\pmb{w}})\ne0$. So ${\pmb{w}}\notin \ker(d\phi_i)_{\pmb{v}}$, which implies that $\phi_i^{-1}(a)$ is $W$-transversal. The proof of the lemma is thus completed.
\end{proof}

A special case of Lemma \ref{L:IB3} is that if $\phi\in C(\RR^n)$ satisfies (IB-1), (IB-2) and \eqref{E:homo}, then it is a GIBF. We use this special case to verify Examples \ref{example1}--\ref{example4} below.

\begin{example}\label{example1}
Let $\RR^n=U\oplus W$ be a nontrivial decomposition, let $\|\cdot\|$ be a norm on $\RR^n$ that is $C^1$ on $\RR^n\smallsetminus(U\cup W)$, and consider the function
\begin{equation}\label{E:norm}
\phi({\pmb{u}}+{\pmb{w}})=\|{\pmb{u}}\|^p\|{\pmb{w}}\|^q.
\end{equation}
Conditions (IB-1) and (IB-2) are clearly satisfied. Also, for ${\pmb{u}}\in U\smz$ and ${\pmb{w}}\in W\smz$ we have
$$\DT \phi(t{\pmb{u}}+{\pmb{w}})=\DT t^p\|{\pmb{u}}\|^p\|{\pmb{w}}\|^q=p\|{\pmb{u}}\|^p\|{\pmb{w}}\|^q\ne0,$$
which implies that \eqref{E:homo} is satisfied. Thus $\phi$ is a GIBF. Note that the polynomial \eqref{E:poly2} is of the form \eqref{E:norm}, hence is a GIBF.
\end{example}

\begin{example}\label{example2}
Let $n=2p$ be even, $\varepsilon\ge0$, and consider the polynomial
$$\phi_\varepsilon(x_1,\ldots,x_n)=\left(\sum_{i=1}^px_ix_{p+i}\right)^2+\varepsilon\left(\sum_{i=1}^px_i^2\right)\left(\sum_{i=1}^px_{p+i}^2\right).$$
Let us verify that if $\varepsilon>0$ then $\phi_\varepsilon$ is a GIBF. Let $U$ and $W$ be as in the sample case \eqref{E:sample} with $q=p$. Then (IB-1) is clear, (IB-2) is satisfied for $N(\lambda)=|\lambda/\varepsilon|^{p/2}$ and the standard Euclidean norm, and \eqref{E:homo} is also satisfied as
$$\DT\phi_\varepsilon(tx_1,\ldots,tx_p,x_{p+1},\ldots,x_n)=2\phi_\varepsilon(x_1,\ldots,x_n).$$
Thus $\phi_\varepsilon$ is a GIBF, and hence the set $\OOO_A(\phi_\varepsilon)$ is HAW for any countable $A\subset\RR$. (The same argument also shows that the polynomial \eqref{E:poly3} is a GIBF.) However, if $\varepsilon=0$ and $n\ne2$, then $\phi_0$ is the square of a quadratic form of signature $(p,p)$, and the Oppenheim conjecture (Margulis' theorem) implies that $\dim  \OOO(\phi_0)=0$.
\end{example}

\begin{example}\label{example3}
The polynomial \eqref{E:poly4}, namely, the function $\phi$ on $\RR^3$ given by
$$\phi(x_1,x_2,x_3)=x_1x_2^2+x_1^3x_3^6$$
is a GIBF. In fact, let $U$ and $W$ be as in \eqref{E:sample} with $p=1$ and $q=2$, then (IB-1) is clear, (IB-2) is satisfied for $N(\lambda)=\max\{|\lambda|,|\lambda|^{\frac13}\}$ and the supremum norm as
$$N\big(\phi(x_1,x_2,x_3)\big)=\max\left\{|x_1x_2^2|+|x_1x_3^2|^3,\left(|x_1x_2^2|+|x_1x_3^2|^3\right)^{\frac13}\right\}\ge\max\big\{|x_1x_2^2|,|x_1x_3^2|\big\},$$
and \eqref{E:homo} is also satisfied as
$$\textstyle \DT \phi(tx_1,x_2,x_3)=x_1x_2^2+3x_1^3x_3^6\ne0$$
if $x_1\ne0$ and $(x_2,x_3)\ne(0,0)$.
\end{example}

\begin{example}\label{example4}
The function $\phi$ on $\RR^4$ given by
$$\phi(x,y,z,s)=x^2z^2+\exp(y^2s^2)+\log(1+x^2s^2+y^2z^2)-1$$
is a GIBF. In fact, let $U$ and $W$ be as in \eqref{E:sample} with $p=q=2$, then (IB-1) is clear, (IB-2) is satisfied for $N(\lambda)=e^{|\lambda|}-1$ and the supremum norm as
$$\phi(x,y,z,s)\ge\log\left(1+\max\{x^2,y^2\}\max\{z^2,s^2\}\right),$$
and \eqref{E:homo} is also satisfied as
$$\textstyle \DT \phi(tx,ty,z,s)=2x^2z^2+2y^2s^2\exp(y^2s^2)+\frac{2(x^2s^2+y^2z^2)}{1+x^2s^2+y^2z^2}>0$$
if $(x,y)\ne(0,0)$ and $(z,s)\ne(0,0)$.
\end{example}

The function $\phi$ in the next example can be written in the form \eqref{E:max}.

\begin{example}\label{example5}
Let $p,q\ge1$ be such that $p+q=n$, and let
$$\phi(x_1,\ldots,x_n)=\max\{|x_1|,\ldots,|x_p|\}^p\max\{|x_{p+1}|,\ldots,|x_n|\}^q.$$
It is easy to see that (IB-1) and (IB-2) are satisfied for $U$ and $W$ be as in \eqref{E:sample}. To verify (IB-3), let us write
$$\phi=\max_{1\le i\le p, 1\le j\le q}\phi_{ij},$$
where
$$\phi_{ij}(x_1,\ldots,x_n)=|x_i|^p|x_{p+j}|^q.$$
Then each $\phi_{ij}$ satisfies \eqref{E:homo}. By Lemma \ref{L:IB3}, $\phi$ satisfies (IB-3), and thus is a GIBF.
\end{example}

We conclude this section by a example that is not covered by Lemma \ref{L:IB3}.

\begin{example}\label{example6}
Let $r>0$. We verify that the function
\begin{equation}\label{E:example6}
\phi(x_1,\ldots,x_n)=x_1\left(\sum_{i=2}^n|x_i|^r\right)^{\frac{n-1}{r}}
\end{equation}
is a GIBF. Let $U$ and $W$ be as in \eqref{E:sample} with $p=1$ and $q=n-1$. Then (IB-1) is clear, and (IB-2) is satisfied for $N(\lambda)=|\lambda|$ and the supremum norm on $\RR^n$. To verify (IB-3), for a subset $I$ of $\{2,\ldots,n\}$ we denote
$$V_I=\{(x_1,\ldots,x_n)\in\RR^n:x_i\ne0 \text{ for every $i\in I$, and $x_j=0$ for every $j\in\{2,\ldots,n\}\sm I$}\}.$$
Then $\big\{V_I:I\subset\{2,\ldots,n\}\big\}$ is a partition of $\RR^n$, and thus for any $a\in\RR$ we have
$$\phi^{-1}(a)=\bigcup_{I\subset\{2,\ldots,n\}}\phi^{-1}(a)\cap V_I.$$
It is straightforward to show that if $a\ne0$, then each $\phi^{-1}(a)\cap V_I$ is an $F$-invariant $C^1$ submanifold of $\RR^n$ and is both $U$-transversal and $W$-transversal. So (IB-3) is satisfied. Hence $\phi$ is a GIBF.
Note that the polynomial \eqref{E:poly1} is the $r=n-1$ case of \eqref{E:example6}. Note also that when $r>1$, one can also verify (IB-3) by verifying \eqref{E:homo}.
\end{example}

\appendix

\section{Proof of Lemma \ref{L:haw-proj}}\label{S:A}

We give the proof of Lemma \ref{L:haw-proj} here. In view of the local nature of the HAW property and the local normal form of a submersion, it suffices to prove the following statement:
\begin{itemize}
\item[($*$)] \textit{Let $\beta\in(0,\frac13)$, $\tilde{\beta}=\beta^2/6$, $V$ a Euclidean space, $W\subset V$ a linear subspace, $P_W:V\to W$ the orthogonal projection, $U\subset V$ an open subset, and $S\subset U$ a subset that is $\tilde{\beta}$-HAW on $U$. Then $P_W(S)$ is $\beta$-HAW on $P_W(U)$.}
\end{itemize}

For the sake of convenience, let us introduce the following concept: We say that two closed balls $B\subset W$ and $\tilde{B}\subset V$ are \textsl{compatible} if $P_W$ sends the center of $\tilde{B}$ to the center of $B$, and the radius of $\tilde{B}$ is twice the radius of $B$. Let us first prove the following lemma.

\begin{lemma}\label{L:submersion}
Let $B\subset W$ and $\tilde{B}\subset V$ be compatible closed balls, and let $\tilde{L}$ be an affine hyperplane in $V$. Let $r$ denote the radius of $B$. Then there exists an affine hyperplane $L=L(B,\tilde{B},\tilde{L})$ in $W$ such that any closed ball in $B\sm L^{(\beta r)}$ of radius $\le \beta r/6$ is compatible with some closed ball in $\tilde{B}\sm\tilde{L}^{(2\tilde{\beta}r)}$.
\end{lemma}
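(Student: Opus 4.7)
The plan is to write down $L$ explicitly in terms of the data $(B,\tilde B,\tilde L)$ and then to use the ``straight-up'' lift $\tilde v_0+(w'-w_0)$ of the center $w'$ of $B'$, shifted by a carefully chosen vector in $W^\bot$. Set coordinates so that $\tilde v_0$ is the center of $\tilde B$ and $w_0=P_W(\tilde v_0)$ is the center of $B$, and write $\tilde v_0=w_0+v_0^\bot$ with $v_0^\bot\in W^\bot$. Pick a unit normal $\tilde n$ with $\tilde L=\{v\in V:\langle v,\tilde n\rangle=c\}$, and decompose $\tilde n=n_W+n_{W^\bot}$ along $W\oplus W^\bot$. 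When $n_W\ne 0$ I would define
$$L:=\bigl\{w\in W:\langle w,n_W\rangle=c-\langle v_0^\bot,n_{W^\bot}\rangle\bigr\},$$
and when $n_W=0$ take $L$ to be any hyperplane in $W$ (its choice will be irrelevant). The merit of the first formula is the identity
$$\|n_W\|\dist_W(w,L)=\bigl|\langle w+v_0^\bot,\tilde n\rangle-c\bigr|,$$
which records, weighted by $\|n_W\|$, the $V$-distance from the zero-shift lift $w+v_0^\bot$ to $\tilde L$.

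Given a closed ball $B'=B_W(w',r')\subset B\setminus L^{(\beta r)}$ with $r'\le\beta r/6$, the hypotheses unpack to $\|w'-w_0\|\le r-r'$ and, when $n_W\ne 0$, $\dist_W(w',L)\ge\beta r+r'$. Consider lifts $\tilde v'=\tilde v_0+(w'-w_0)+u$ with $u\in W^\bot$, for which $P_W(\tilde v')=w'$ automatically. Since $w'-w_0\in W$ is orthogonal to $u\in W^\bot$, the inclusion $B_V(\tilde v',2r')\subset\tilde B$ becomes $\|u\|\le R:=\sqrt{(2r-2r')^2-\|w'-w_0\|^2}\ge\sqrt 3(r-r')$; combined with $r'\le\beta r/6$ and $\beta<1/3$, this forces $R>\beta r$. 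Writing $\tilde v'=w'+v_0^\bot+u$, a direct computation gives
$$\dist(\tilde v',\tilde L)=\bigl|\langle w'+v_0^\bot,\tilde n\rangle-c+\langle u,n_{W^\bot}\rangle\bigr|.$$
Choosing $u$ of norm $R$ parallel to $\pm n_{W^\bot}$ so as to maximize this distance (or $u=0$ when $n_{W^\bot}=0$), one obtains
$$\dist(\tilde v',\tilde L)\;\ge\;\|n_W\|\dist_W(w',L)+R\|n_{W^\bot}\|\;\ge\;\beta r\bigl(\|n_W\|+\|n_{W^\bot}\|\bigr)\;\ge\;\beta r,$$
the last step because $\|n_W\|+\|n_{W^\bot}\|\ge\sqrt{\|n_W\|^2+\|n_{W^\bot}\|^2}=1$. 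Since $r'\le\beta r/6$ with $\beta<1/3$ gives $2r'+\beta^2 r/3\le(\beta+\beta^2)r/3<\beta r$, we conclude $\dist(\tilde v',\tilde L)\ge 2r'+2\tilde\beta r$, i.e.\ $B_V(\tilde v',2r')\cap\tilde L^{(2\tilde\beta r)}=\varnothing$.

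The one delicate point to watch will be balancing the two contributions to $\dist(\tilde v',\tilde L)$: the ``horizontal'' term $\|n_W\|\dist_W(w',L)$ controls the regime where $\tilde L$ is nearly parallel to the $W^\bot$-fibers (so $n_{W^\bot}$ is small), while the ``vertical'' term $R\|n_{W^\bot}\|$, coming from the freedom in $u$, takes over when $\tilde L$ is transverse to the fibers. The unit-length constraint $\|n_W\|^2+\|n_{W^\bot}\|^2=1$ is exactly what splices the two regimes together and avoids any case split on the angle of $\tilde L$ with respect to $W$. The constant $1/6$ in the hypothesis $r'\le\beta r/6$ is precisely what is needed to make both $R>\beta r$ and $2r'+\beta^2 r/3<\beta r$ hold simultaneously.
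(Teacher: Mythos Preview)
Your proof is correct and takes a genuinely different route from the paper's. The paper reduces to the origin-centered case, picks a unit normal $u$ of $\tilde L$, and splits into two cases according to whether $\|P_Wu\|\le 1/\sqrt2$ or $\|P_Wu\|>1/\sqrt2$: in the first case any hyperplane $L$ works and the lift of $w'$ is shifted by $\pm r$ along $(u-P_Wu)/\|u-P_Wu\|$; in the second case one takes $L=\tilde L\cap W$ and uses the unshifted lift $w'$. Your argument instead uses a single hyperplane throughout (your $L$ coincides with $\tilde L\cap W$ once one centers at the origin, and agrees with the paper's Case~2 choice), and a single lift shifted by the maximal admissible amount $R$ along $\pm n_{W^\bot}$; the two-regime analysis is then absorbed into the convexity inequality $\|n_W\|+\|n_{W^\bot}\|\ge(\|n_W\|^2+\|n_{W^\bot}\|^2)^{1/2}=1$. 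What you gain is a case-free, coordinate-free argument that makes the geometry transparent; what the paper gains is slightly simpler arithmetic in each branch (e.g.\ its shift has fixed length $r$ rather than the data-dependent $R$). Both arguments use the numerology $r'\le\beta r/6$, $\beta<1/3$, $\tilde\beta=\beta^2/6$ in the same way to close the final inequality $\dist(\tilde v',\tilde L)\ge 2r'+2\tilde\beta r$.
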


\begin{proof}
Without loss of generality, we may assume that both $B$ and $\tilde{B}$ are centered at the origin. Let $u\in V$ be a unit normal vector of $\tilde{L}$. We divide the proof into two cases.
\smallskip

(1) Suppose $\|P_Wu\|\le1/\sqrt{2}$. We show that any hyperplane $L$ in $W$ has the required property. Let $B'\subset B$ be a closed ball with center $w\in W$ and radius $r'\le \beta r/6$. Let $v_\pm=w\pm r\frac{u-P_Wu}{\|u-P_Wu\|}$. Without loss of generality, assume $\dist(v_+,\tilde{L})\ge\dist(v_-,\tilde{L})$. Let $\tilde{B}'$ be the closed ball in $V$ with center $v_+$ and radius $2r'$. Then $\tilde{B}'$ is compatible with $B'$. We claim that $\tilde{B}'\subset\tilde{B}\sm\tilde{L}^{(2\tilde{\beta}r)}$. First, we have
\begin{align*}
\dist(v_+,\tilde{L})&\ge\frac12\big(\dist(v_+,\tilde{L})+\dist(v_-,\tilde{L})\big)\ge\frac12|\la v_+-v_-,u\ra|\\
&=r\|u-P_Wu\|\ge r/\sqrt{2}\ge2r'+2\tilde{\beta}r.
\end{align*}
This means that $\tilde{B}'\cap\tilde{L}^{(2\tilde{\beta}r)}=\varnothing$. On the other hand, for $v\in\tilde{B}'$ we have
$$\|v\|\le\|v-v_+\|+\|v_+\|\le2r'+\sqrt{2}r\le2r.$$
So $\tilde{B}'\subset\tilde{B}$. This verifies the claim.
\smallskip

(2) Suppose $\|P_Wu\|>1/\sqrt{2}$. We show that the hyperplane $L:=\tilde{L}\cap W$ in $W$ has the required property. Let $B'\subset B\sm L^{(\beta r)}$ be a closed ball with center $w\in W$ and radius $r'\le \beta r/6$. Let $\tilde{B}'$ be the closed ball in $V$ with center $w$ and radius $2r'$. Then $\tilde{B}'$ is compatible with $B'$. We have
$$\dist(w,\tilde{L})=\|P_Wu\|\dist(w,L)\ge(r'+\beta r)/\sqrt{2}\ge2r'+2\tilde{\beta}r,$$
and for $v\in\tilde{B}'$,
$$\|v\|\le\|v-w\|+\|w\|\le2r'+r\le2r.$$
So $\tilde{B}'\subset\tilde{B}\sm\tilde{L}^{(2\tilde{\beta}r)}$. This proves the lemma.
\end{proof}

We now proceed to prove Statement ($*$). For simplicity, let us refer the $\beta$-hyperplane absolute game on $P_W(U)$ with target set $P_W(S)$ as Game 1, and refer the $\tilde{\beta}$-hyperplane absolute game on $U$ with target set $S$ as Game 2. We will construct a winning strategy for Game 1 using the winning strategy for Game 2.

In order to win Game 1, Alice invites two assistants, say Alice's sister and Bob's brother, to play Game 2. Bob's brother will play following Alice's instructions, and Alice's sister will play according to the winning strategy for Game 2. Suppose Bob starts Game 1 by choosing a closed ball $B_0\subset P_W(U)$. Without loss of generality, we may assume that Bob will choose the closed balls $B_i$ so that their radii $r_i$ tend to zero. Let $i_0\ge0$ be the smallest index such that $B_{i_0}$ is compatible with some closed ball in $U$. If $i_0\ne0$, Alice chooses the hyperplane neighborhoods $\{L_i^{(r'_i)}:0\le i<i_0\}$ arbitrarily. After the ball $B_{i_0}$ is chosen by Bob, Alice asks Bob's brother to start Game 2 by choosing a closed ball $\tilde{B}_0\subset U$ compatible with $B_{i_0}$, and next asks her sister to choose a hyperplane neighborhood $\tilde{L}_0^{(\tilde{r}'_0)}\subset V$ according to the winning strategy for Game 2, where $\tilde{r}'_0\le\tilde{\beta}\tilde{r}_0$ and $\tilde{r}_0$ is the radius of $\tilde{B}_0$. Then Alice chooses the hyperplane neighborhood $L_{i_0}^{(r'_{i_0})}\subset W$, where $r'_{i_0}=\beta r_{i_0}$,  $L_{i_0}=L(B_{i_0},\tilde{B}_0,\tilde{L}_0)$ and $L(\cdot,\cdot,\cdot)$ is the function given in Lemma \ref{L:submersion}.
\smallskip

Assume that for some $k\ge0$ and some $i_k\ge k$, the following data have been chosen:
\begin{itemize}
  \item A closed ball $B_{i_k}$ in $W$ chosen by Bob;
  \item A closed ball $\tilde{B}_k$ in $V$ of radius $\tilde{r}_k$ chosen by Bob's brother, which is compatible with $B_{i_k}$;
  \item A hyperplane neighborhood $\tilde{L}_k^{(\tilde{r}'_k)}$ ($\tilde{r}'_k\le\tilde{\beta}\tilde{r}_k$) in $V$ chosen by Alice's sister, according to the winning strategy for Game 2;
  \item A hyperplane neighborhood $L_{i_k}^{(r'_{i_k})}$ in $W$ chosen by Alice, such that $r'_{i_k}=\beta r_{i_k}$ and $L_{i_k}=L(B_{i_k},\tilde{B}_k,\tilde{L}_k)$.
\end{itemize}
(Note that these data have been chosen for $k=0$.) Let $i_{k+1}\ge i_k+1$ be the smallest index such that the radius of the closed ball $B_{i_{k+1}}$ chosen by Bob satisfies $r_{i_{k+1}}\le\beta r_{i_k}/6$. Alice chooses the hyperplane neighborhoods $\{L_i^{(r'_i)}:i_k<i<i_{k+1}\}$ arbitrarily, and then asks Bob's brother to choose a closed ball $\tilde{B}_{k+1}\subset \tilde{B}_k\sm \tilde{L}_k^{(\tilde{\beta}\tilde{r}_k)}$ compatible with $B_{i_{k+1}}$. Note that since $B_{i_{k+1}}\subset B_{i_k+1}\subset B_{i_k}\sm L_{i_k}^{(\beta r_{i_k})}$, the choice of $L_{i_k}$ guarantees that the choice of such a $\tilde{B}_{k+1}$ is possible. Note also that $\tilde{B}_k\sm \tilde{L}_k^{(\tilde{\beta}\tilde{r}_k)}\subset\tilde{B}_k\sm \tilde{L}_k^{(\tilde{r}'_k)}$ and the radius $\tilde{r}_{k+1}$ of $\tilde{B}_{k+1}$ satisfies
$$\tilde{r}_{k+1}=2r_{i_{k+1}}\ge2\beta r_{i_{k+1}-1}>2\beta\cdot\beta r_{i_k}/6=\tilde{\beta}\tilde{r}_k.$$
So the move of Bob's brother is legal for Game 2. Next, Alice asks her sister to choose a hyperplane neighborhood $\tilde{L}_{k+1}^{(\tilde{r}'_{k+1})}$ in $V$ according to the winning strategy for Game 2. Then Alice choose the hyperplane neighborhood $L_{i_{k+1}}^{(r'_{i_{k+1}})}$ in $W$ such that $r'_{i_{k+1}}=\beta r_{i_{k+1}}$ and $L_{i_{k+1}}=L(B_{i_{k+1}},\tilde{B}_{k+1},\tilde{L}_{k+1})$.

Let us show that the strategy constructed above guarantees a win for Alice. Since Alice's sister is playing according to the winning strategy for Game 2, we have $\bigcap_{k=0}^\infty\tilde{B}_k\subset S$. Since $B_{i_k}$ and $\tilde{B}_k$ are compatible, we have $B_{i_k}\subset P_W(\tilde{B}_k)$. It follows that
$$\bigcap_{i=0}^\infty B_i=\bigcap_{k=0}^\infty B_{i_k}\subset\bigcap_{k=0}^\infty P_W(\tilde{B}_k)=P_W\left(\bigcap_{k=0}^\infty\tilde{B}_k\right)\subset P_W(S).$$
Hence Alice wins. This proves Statement ($*$), and thus completes the proof of Lemma~\ref{L:haw-proj}. \qed

\end{document}